\DeclarePairedDelimiter\ceil{\lceil}{\rceil}
\DeclarePairedDelimiter\floor{\lfloor}{\rfloor}
\newcommand{\Gain}{{\rm Gain}}
\renewcommand{\emptyset}{\varnothing}
\renewcommand{\P}{\mathbf P}
\theoremstyle{definition}
\newtheorem{theorem}{Theorem}
\newtheorem{claim}{Claim}
\newtheorem{lemma}{Lemma}
\newtheorem{remark}{Remark}
\newtheorem{conjecture}{Conjecture}
\newtheorem{assumption}{Assumption}
\newtheorem{corollary}{Corollary}
\newtheorem{definition}{Definition}
\title{On the robustness of the metric dimension of grid graphs to adding a single edge}
\author{
   Satvik Mashkaria \\
   Department of Computer Science\\
   Indian Institute of Technology Bombay\\
   Mumbai, India\\
   \texttt{satvikmashkaria@cse.iitb.ac.in} \\
   %% examples of more authors
   \And
  Gergely \'Odor\\
   Department of Computer Science\\
   École Polytechnique Fédérale de Lausanne\\
  Switzerland \\
   \texttt{gergely.odor@epfl.ch} \\
   \AND
   Patrick Thiran \\
   Department of Computer Science\\
   École Polytechnique Fédérale de Lausanne\\
  Switzerland \\
   \texttt{patrick.thiran@epfl.ch} \\
}
\begin{document}
\maketitle
\nocite{*}

\begin{abstract}
The metric dimension (MD) of a graph is a combinatorial notion capturing the minimum number of landmark nodes needed to distinguish every pair of nodes in the graph based on graph distance. We study how much the MD can increase if we add a single edge to the graph. The extra edge can either be selected adversarially, in which case we are interested in the largest possible value that the MD can take, or uniformly at random, in which case we are interested in the distribution of the MD. The adversarial setting has already been studied by [Eroh et. al., 2015] for general graphs, who found an example where the MD doubles on adding a single edge. By constructing a different example, we show that this increase can be as large as exponential. However, we believe that such a large increase can occur only in specially constructed graphs, and that in most interesting graph families, the MD at most doubles on adding a single edge. We prove this for $d$-dimensional grid graphs, by showing that $2d$ appropriately chosen corners and the endpoints of the extra edge can distinguish every pair of nodes, no matter where the edge is added. For the special case of $d=2$, we show that it suffices to choose the four corners as landmarks. Finally, when the extra edge is sampled uniformly at random, we conjecture that the MD of 2-dimensional grids converges in probability to $3+\mathrm{Ber}(8/27)$, and we give an almost complete proof.
\end{abstract}

% keywords can be removed
% \keywords{metric dimension \and Second keyword \and More}

\section{Introduction}

The metric dimension (MD) of a finite, simple graph is a combinatorial notion first defined in 1975 by \cite{slater1975leaves} and independently by \cite{harary1976metric}. It can be interpreted as the minimum number of landmark nodes that can distinguish every pair of nodes based on the graph distances from these landmark nodes. The MD of $d$-dimensional grid graphs with large side lengths is $d$, hence for these graphs the MD is consistent with our common-sense notions of dimension. On the theoretical side, the MD has deep connections to the automorphism group of the graph $G$ \cite{bailey2011base,caceres2010determining,garijo2014difference}, and hence the graph isomorphism problem \cite{babai1980random}. In applications, the MD is used to compute the minimum number of landmark nodes required in robot navigation \cite{khuller1996landmarks, shao2019metric}, computational chemistry \cite{chartrand2000resolvability}, and network discovery \cite{beerliova2006network}. A recent application that is gaining more and more interest is the problem of finding patient zero of an epidemic. Finding patient zero can be especially useful in the early stages of an epidemic, as it was in the case of COVID-19 in the beginning of 2020 in multiple countries including China \cite{zhang2020strategies}, Italy \cite{carinci2020covid} and the Netherlands \cite{alderweireld2020covid}. There are multiple mathematical models of the patient zero problem. The first model was introduced by \cite{shah2009rumors}, who were interested in finding the source of a rumour in a network. In this paper, we focus on the model of \cite{pinto2012locating}, who introduced the problem of detecting the first node of an epidemic given the underlying graph and the time of infection of small subset of sensor nodes. In the case of a deterministically spreading epidemic, the minimum number of sensors required to detect patient zero has been connected to the MD by \cite{zejnilovic2013network}. Indeed, in the deterministic case, if the time of infection of patient zero is also known, the times of infection of the sensor nodes can be converted to graph distances between the sensors and patient zero, and the number of sensors required to always detect patient zero equals the MD. In reality, epidemics are not deterministic and the time of infection of patient zero is not known, but the MD can still give information on the number of sensors required to detect patient zero \cite{spinelli2017effect}.

Since the MD is NP-hard to compute \cite{khuller1996landmarks} and is approximable only to a factor of $\log(N)$ \cite{beerliova2006network,hauptmann2012approximation}, theoretical studies play an essential role in understanding the MD of large graphs. The MD of a wide range of combinatorial graph families have already been computed, we refer to \cite{raj2017metric} for a list of references. For applications on naturally forming networks like the patient zero detection problem, random graph models are the most appropriate tool for theoretical study. There are only a few results on the MD of random graphs, including Erd\H{o}s-R\'enyi graphs \cite{bollobas2012metric}, a large class of random trees \cite{mitsche2015limiting,komjathy2020metric}, and more recently random geometric graphs \cite{lichev2021localization}. In the case of $\mathcal{G}(n,p)$ Erd\H{o}s-R\'enyi random graphs, it has been shown that the MD goes through a non-monotone, zig-zag behavior as we vary the probability of connections $p$, and we let the number of nodes $n$ tend to infinity \cite{bollobas2012metric}. Not only is the behaviour non-monotone, it is also not smooth in the parameters. For example for $p=n^{-\frac12}$ we have $\mathrm{MD}\approx \log(n)$ but for $p=n^{-\frac12+\epsilon}$ we have $\mathrm{MD} \approx \sqrt{n}$. For $p=\Theta(1)$ we have  $\mathrm{MD}\approx \log(n)$ again. This surprising result raises the main question of the current paper: how robust is the notion of the MD to the addition or deletion of edges? This question has been already studied by \cite{eroh2015effect}, who found that the MD was robust to edge deletions but not to edge additions (see more on the related work in combinatorics in Section \ref{sec:related_work}). In this paper, we focus on more precise results on how large the increase of the MD can be if we add an edge to a general graph or a grid graph. We are interested both in the adversarial setting, where we look for an upper bound on the MD of the new graph no matter where the edge is added, and in the random setting, where we try determine distribution of the MD of the new graph on the addition of a uniformly randomly chosen edge. 

%this question both in general and for specific graphs (the ring and the 2D grid graph), with the edges added or deleted adversarially or randomly. To keep the analysis tractable, in this paper we only focus on single edge additions and deletions. We formulate our results in terms of edge additions and not removals, because in the case of arbitrary edge changes the two formulations are symmetric (non-robustness to edge removal can be seen as non-robustness to edge addition in the smaller graph), and in the case of random edge changes, adding edges is a more difficult problem to study than removing edges (we are usually interested in sparse graphs, and in sparse graphs there are more ways to add edges than to remove them).

Understanding the robustness of the MD to a singe edge addition or deletion has wide ranging practical implications. For the graphs whose MD is non-robust, the MD might not be a very informative notion for application purposes. This is especially true in the application settings where we only have a noisy estimate of the underlying network. For instance, in most papers on patient zero detection, the contact network is assumed to be completely known; an assumption which does not hold in reality. Indeed, the contact network is usually estimated \cite{gomez2012inferring}, which is a very challenging task \cite{eames2015six}. With the exception of \cite{zejnilovic2016extending}, we are not aware of any theoretical work in the source detection community that addresses the question of robustness in the estimation or the number of required sensors, when our knowledge of the contact network is noisy. We note that robustness to node failures has been more extensively studied, see \cite{hernando2008fault} and the several follow-up articles.

In different applications, where we know the underlying network exactly, non-robustness of the MD can hint at opportunities for improvement or vulnerabilities to malicious attacks depending on our goal in the specific application. For instance, in the source obfuscation problem, our goal is to spread some information in a network so that a few spy nodes are not able to detect the information source \cite{fanti2015spy,fanti2017hiding}. These source obfuscation models are used to anonymize transactions on the Bitcoin network \cite{bojja2017dandelion}. Similarly to the source detection problem, in source obfuscation the MD could serve as a proxy for how many spies are needed for the attacker to detect the source, and therefore the robustness of the MD translates to the robustness of privacy guarantees. 

Our proofs rely on careful combinatorial analysis, and a detailed description of how the shortest paths change in a graph after adding an edge. In particular, when adding edge to graph, we study the set of node pairs between which the shortest paths are changed and unchanged. These sets depend on the extra edge, but otherwise they are highly structured. We are not aware whether this structure (described in Section \ref{sec:changes}) has been previously studied in the literature, but we believe it could bring an insight into different problems where the addition of a single edge is studied (i.e. wormhole attacks \cite{hu2006wormhole} and the dynamic all pairs shortest paths problem in data structures \cite{demetrescu2004new,abraham2017fully}).

\subsection{Related work in combinatorics}
\label{sec:related_work}

The question of how much the MD of a graph can change on the addition of a single edge has been first studied for trees, where \cite{chartrand2000resolvability} found that on the addition of an arbitrary edge, the MD cannot increase by more than one, and cannot decrease by more than two. The result has been proved later in \cite{eroh2017comparison}. The work that is most similar to ours is \cite{eroh2015effect}, where the change of the MD on a singe edge or vertex addition or deletion is studied in general graphs. The authors find that, similarly to trees, the decrease of the MD on edge additions cannot be more than two, however, the increase is not bounded by any constant in general graphs. The latter statement is supported by an example graph, where the addition of a single edge doubles the MD. 

More distant but still relevant questions were studied by \cite{mol2020threshold} and \cite{zejnilovic2016extending}. In \cite{mol2020threshold}, the authors define the notion of the \emph{threshold dimension} of a graph $G$ as the minimum MD we can achieve by adding an arbitrary number of edges to~$G$. Obviously, adding too many edges will bring $G$ close to the complete graph, which has a very large MD, but the authors show that for some graphs $G$ it is possible to add edges in a smart way to significantly reduce the MD. We note that in a different paper, Geneson and Yi have constructed connected graphs $H$ and $G$ such that $H\subset G$ and the ratio of the metric dimensions of $H$ and $G$ is arbitrarily large \cite{geneson2020broadcast}. The authors of \cite{mol2020threshold} also connect the threshold dimension with the dimension of the Euclidean space in which the graph can be embedded.

In \cite{zejnilovic2016extending}, we are given $k$ connected graphs and it is assumed that $k-1$ edges are missing between them, which would connect all $k$ components into a single one. The \emph{extended metric dimension} is the number of landmarks we need to distinguish any pair of nodes, no matter where the $k-1$ edges are. Note that as opposed to our setup, in the setup of \cite{zejnilovic2016extending} the landmarks are placed non-adaptively to the extra edges, in fact, the nodes must be distinguished without knowing the location of the extra edges.

\subsection{Summary of results}

Before summarizing the results we recall the rigorous definition of the metric dimension.

\begin{definition}[MD]

\label{MD_def}
Let $G=(V,E)$ be a simple connected graph, and let us denote by $d_G(A,B) \in \mathbb{N}$ the length of the shortest path (that is, the number of edges) between nodes $A$ and $B$. A subset $R\subseteq V$ is a \textit{resolving set} in $G$ if for every pair of nodes $A\ne B \in V$ there is a distinguishing node $X \in R$ for which $d_G(A,X)\ne d_G(B,X)$. The minimum cardinality of a resolving set is the \textit{metric dimension} of $G$, denoted by $\beta(G)$.
\end{definition}

The main contribution of our paper is a refined analysis on the increase of the MD on adding a single edge. In Section~\ref{sec:large_incr_ex}, we show an example graph where adding a particular edge increases the MD from $\Theta(\log(N))$ to $\Theta(N)$, which is a much larger increase than in the example of \cite{eroh2015effect}, where the MD only doubles. For a result in the opposite direction, in Section~\ref{sec:upper_bound} we provide an upper bound on the MD of the graph with the extra edge in terms of the MD of two subgraphs of the original graph. We believe that this result can be used in several graph families to show that the exponential increase in Section \ref{sec:large_incr_ex} only happens for very special (in a sense very heterogeneous) graphs, and that in most cases the MD at most doubles. We prove this doubling upper bound for $d$-dimensional grid graphs in Section \ref{sec:d_dimensional}, and finally, we perform an even more refined analysis for the case of $d=2$ in Section \ref{sec:2dimensional}.

For the case $d=2$, we conjecture that the limiting distribution of the MD after a uniformly random edge is added is $3+\mathrm{Ber}(8/27)$, where $\mathrm{Ber}$ is the Bernoulli distribution. The only part missing in proving this conjecture is a lower bound on the MD when the extra edge is in a specific configuration. Such lower bound proofs are especially tedious, since one must show that no set of landmark nodes of a certain size can distinguish every pair of nodes, which often leads to a long case-by-case analysis. Instead, we proved as much as we could reasonably write down in a paper, and state the rest of our results as a conjecture at the end of the paper (Conjecture \ref{conj:precise}). A similar approach was used in~\cite{manuel2006landmarks} when determining the MD of torus graphs.
%\section{Preliminaries}
\section{Changes in the all-pairs shortest paths after adding an edge}
\label{sec:changes}
In this section we will develop tools to understand how the shortest paths change in a graph after adding an extra edge.

Let $G = (V, E_G)$ be a connected simple graph, with vertex set $V$ (we use the word vertex, node and point interchangeably) and edge set $E_G$. We add an edge $e$ between two non-adjacent vertices $E$ and $F$ to obtain a graph $G'=(V, E_G \cup \{e\})$. Let $d_H(A, B)$ denote the length of the shortest path between vertices $A$ and $B$ in graph $H$. For simplicity, we will use the notation $d_G(A, B) = AB$. 

\begin{remark}
\label{distance}
If we want to reach vertex $B$ from vertex $A$, there are three options: Either we do not use $e$ at all, or we use $e$ from $E$ to $F$ or we use $e$ from $F$ to $E$.
% depending upon whether $A$ is closer to $E$ or $F$ respectively.
Hence,  
\begin{equation}
\label{eq:minAB}
d_{G'}(A, B) = \min( AB, AE + 1 + FB, AF + 1 + EB).
\end{equation}
\end{remark}
Clearly, we cannot increase the distance between two vertices by adding an edge, or in other words either $d_{G'}(A, B)~\leq~AB$. Next, we describe the pairs of vertices whose distance decreased after adding the edge.
\begin{definition}[special region]
For any vertex $A$, $R_A = \{ Z \in V \mid d_{G'}(Z, A) < ZA \}$. We will refer $R_A$ as the \textit{special region of A}. 
\end{definition}
The special region contains the vertices which will "use" the extra edge $e$ to reach $A$. Formally, we can write this as $Z \in R_A$ is equivalent with $d_{G'}(A, Z) = \min(AE + 1 + FZ, AF + 1 + EZ)<ZA$. 

\begin{definition}[normal region, normal vertex]
$N_A = V \setminus R_A$ will be referred as the \textit{normal region of A}. We call the intersection of all normal regions as simply the \textit{normal region} and we denote it by $N$. A vertex in the normal region is called a \textit{normal vertex}.
\end{definition}

The normal region can be succinctly expressed as $N = \{ Z \in V \mid R_Z = \emptyset \}$. For a normal vertex $Z \in N$ we have $d_{G'}(A, Z) = AZ$ for every vertex $A$, that is distances from or to these vertices $Z$ are unchanged after adding edge $e$, which makes normal vertices the simplest type of vertices from the point of view of our analysis. The following claim helps us to characterize the normal region for any graph.
\begin{claim}
\label{partition}
The set of vertices $V$ can be partitioned to the following three sets,
\begin{align*}
R_E &= \{ A \in V \mid AE - AF > 1 \} \\
N &= \{ A \in V \mid |AE -AF| \leq 1 \} \\
R_F &= \{A \in V \mid AE - AF < -1 \}. 
\end{align*}
\end{claim}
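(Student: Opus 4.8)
The plan is to prove the claimed three-way partition directly from the distance formula in Remark~\ref{distance}, by analyzing for each vertex $A$ which of the three options in Equation~\eqref{eq:minAB} can ever beat the direct distance $AB$, and in particular whether $A$ lies in some special region or whether its own special region is empty. First I would observe that the three sets $R_E$, $N$, $R_F$ are defined purely in terms of the quantity $AE - AF$, and that they are manifestly pairwise disjoint and cover all of $V$ (since for every $A$ exactly one of $AE - AF > 1$, $|AE - AF| \le 1$, $AE - AF < -1$ holds), so the content of the claim is not the disjointness but rather the identification of the middle set with the normal region $N$ as previously defined.

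\medskip

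The heart of the argument is therefore to show the equivalence $|AE - AF| \le 1 \iff R_A = \emptyset$, i.e.\ that $A$ is a normal vertex exactly when its distances to the two endpoints of the extra edge differ by at most one. For the easier direction I would suppose $|AE - AF| \le 1$ and show $R_A = \emptyset$: take any $Z$ and use Equation~\eqref{eq:minAB} together with the triangle inequality $FZ \ge EZ - EF \ge EZ - 1$ (valid since $E$ and $F$ are adjacent in $G'$, so $EF = 1$; in $G$ they are non-adjacent but the relevant bound still follows from considering the $G$-distances). Concretely, $AE + 1 + FZ \ge AE + 1 + (EZ - 1) = AE + EZ \ge AZ$ by the triangle inequality in $G$, and symmetrically $AF + 1 + EZ \ge AZ$, where to close the second inequality one uses $AF \ge AE - 1$, which is exactly the hypothesis $AE - AF \le 1$. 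Hence neither alternative route is shorter than $AZ$, so $d_{G'}(A,Z) = AZ$ for all $Z$ and $R_A = \emptyset$.

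\medskip

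For the converse direction I would prove the contrapositive: if $|AE - AF| > 1$, say $AE - AF > 1$ (the case $AE - AF < -1$ is symmetric by swapping the roles of $E$ and $F$), then $R_A \ne \emptyset$, so $A \notin N$. The natural witness is $Z = E$ itself: by Equation~\eqref{eq:minAB}, $d_{G'}(A, E) \le AF + 1 + EE = AF + 1 < AE$, where the strict inequality is precisely the assumption $AE - AF > 1$, i.e.\ $AF + 1 < AE$. Thus $E \in R_A$, so $R_A$ is nonempty and $A$ lies in $R_E$ rather than in the normal region. This simultaneously shows that the label $R_E$ is apt: the vertices with $AE - AF > 1$ are exactly those that reach $A$ shortcutting through the endpoint $E$. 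Putting the two directions together gives $N = \{A : |AE - AF| \le 1\}$, and the remaining two sets are then forced by the trichotomy of $AE - AF$ against the values $\pm 1$.

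\medskip

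I expect the main obstacle to be a bookkeeping subtlety rather than a conceptual one: one must be careful that $AE$, $AF$ and the like denote $G$-distances (as fixed by the convention $d_G(A,B) = AB$) while the shortcut routes add the $+1$ for traversing $e$, and that all the triangle-inequality manipulations are carried out in the correct graph. In particular, verifying that $R_A = \emptyset$ requires ruling out a shortcut to \emph{every} target $Z$, not just to $E$ and $F$, so the triangle-inequality step above must be stated for arbitrary $Z$; this is the only place where a small amount of care is needed to make sure the bound $FZ \ge EZ - 1$ (and its symmetric partner) is applied in the right direction.
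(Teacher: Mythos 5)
Your overall strategy is the same as the paper's: for $|AE-AF|\le 1$, bound both alternative routes in \eqref{eq:minAB} from below by $AZ$, and for $AE-AF>1$ exhibit the witness $Z=E$ with $d_{G'}(A,E)\le AF+1<AE$. The witness half is correct and is exactly the paper's argument. However, your treatment of the first route contains a genuine error: the bound $FZ\ge EZ-1$ is false. You justify it by ``$EF=1$'', but $FZ$, $EZ$ and $EF$ denote distances in $G$, where $E$ and $F$ are non-adjacent, so $EF\ge 2$; taking $Z=F$ gives $FZ=0$ while $EZ-1=EF-1\ge 1$. The parenthetical assertion that ``the relevant bound still follows from considering the $G$-distances'' is precisely what fails, and ironically this is the $G$-versus-$G'$ bookkeeping subtlety you yourself flagged at the end as the main danger. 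A second symptom that the step cannot be right: your chain $AE+1+FZ\ge AE+EZ\ge AZ$ never invokes the hypothesis $|AE-AF|\le 1$, so if it were valid it would show that the route $A\to E\to F\to Z$ is never shorter than $AZ$ for \emph{any} pair $A,Z$, i.e.\ that the extra edge never decreases any distance. That is absurd: take $A$ adjacent to $E$ and $Z=F$; then $AE+1+FZ=2$ while $AZ=AF$ can be arbitrarily large.

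The repair is exactly the symmetric twin of the argument you gave correctly for the second route: the hypothesis supplies two inequalities, and each route needs one of them. For the route $AE+1+FZ$, use $AF\le AE+1$ together with the triangle inequality $AZ\le AF+FZ$ to get $AZ\le AF+FZ\le AE+1+FZ$; for the route $AF+1+EZ$, use $AE\le AF+1$ and $AZ\le AE+EZ$, as you did. This corrected version is precisely the paper's proof. With that one step fixed your argument goes through; like the paper, it leaves implicit the easy converse inclusions needed to upgrade the three ``condition implies region'' statements into the claimed equality of sets, which follow from the symmetry $A\in R_E\iff E\in R_A$ and from reading off $d_{G'}(A,F)=\min(AF,AE+1)=AF$ when $AE-AF>1$.
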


The intuition for Claim \ref{partition} is that if we are trying to reach $A$ from some other node, we may want to use $e$ in the $EF$ direction if $F$ is closer to $A$, we may want to use $e$ in the $FE$ direction if $E$ is closer to $A$, and there is no gain in using $e$ if $E$ and $F$ are almost equidistant to $A$. The three regions are illustrated in Figure \ref{fig:general}.

\begin{proof}

First assume that $|AE - AF| \leq 1$. For an arbitrary vertex $B$ in the graph, using triangular inequality,
\begin{align*}
    AB & \leq AE + EB \leq AF + 1 + EB .
\end{align*}

Similarly, 
$$AB \leq AE + 1 + FB.$$
Hence, by Remark \ref{distance}, we have that $d_{G'}(A, B) = AB$. As this is true for any vertex $B \in V$, we must have $A \in N$. 

Next assume $AE - AF > 1$. Then, by Remark \ref{distance},
$$d_{G'}(A, E) = \min(AE, AE + 1 + AF, AF + 1) = AF+1,$$
which implies that $A \in R_E$. The  $AE - AF < -1$ case follows analogously.
\end{proof}
\begin{figure}[!ht]
    \centering
    \includegraphics[scale = 0.5]{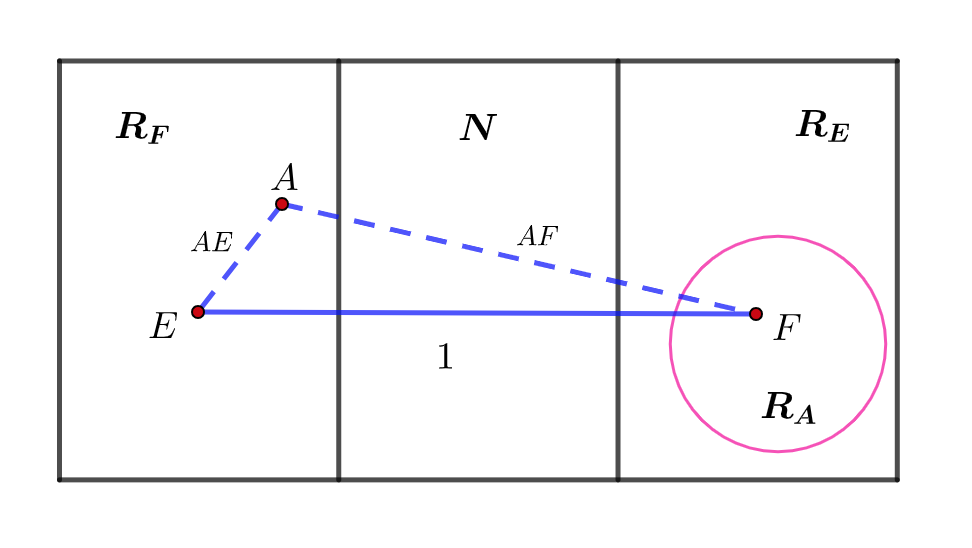}
    \caption{Graph $G'$ partitioned into three regions: $R_E$, $N$ and $R_F$. }
    \label{fig:general}
\end{figure}

The usefulness of partitioning the vertices into $R_E,N$ and $R_F$ goes beyond just characterizing the normal region. Note that $R_E$ collects the vertices that use the edge in the $FE$ direction (because $F$ is closer to them), and $R_F$ collects the vertices that use the edge in the $EF$ direction. There are no nodes that use the extra edge in both directions. Hence, if the two nodes are in the same special region $R_E$ or $R_F$, they are using the extra edge in the same direction, and they cannot use the extra edge to reduce the distance between themselves. We formalize this intuition in the next claim.

\begin{claim}
\label{same_region}
If two vertices $A$ and $B$ lie in the same special region $R_E$ or $R_F$, then $d_{G'}(A, B)=d_G(A, B)$, or equivalently $B \not\in R_A$ and $A\not\in R_B$.  
\end{claim}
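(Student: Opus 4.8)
The plan is to prove directly that $d_{G'}(A,B) = AB$; the equivalence with the conditions $B \notin R_A$ and $A \notin R_B$ is then immediate, since $d_{G'}(B,A) \le BA$ always holds, so $B \notin R_A$ is the same as $d_{G'}(A,B) = AB$, and symmetrically for $A \notin R_B$. By Remark~\ref{distance} we have $d_{G'}(A,B) = \min(AB,\; AE+1+FB,\; AF+1+EB)$, so it suffices to prove the two inequalities
\begin{equation*}
AB \le AE + 1 + FB \qquad \text{and} \qquad AB \le AF + 1 + EB,
\end{equation*}
after which the minimum is attained by the first term $AB$. I will treat the case $A,B \in R_E$; the case $A,B \in R_F$ follows by interchanging the roles of $E$ and $F$ throughout.

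For the key step I would invoke the characterization from Claim~\ref{partition}, namely that $A,B \in R_E$ means $AE - AF > 1$ and $BE - BF > 1$, and combine each hypothesis with a single triangle inequality routed through $F$. For the first inequality, $AB \le AF + FB$ by the triangle inequality, and since $A \in R_E$ gives $AF < AE - 1$, we obtain $AB \le AF + FB < AE + 1 + FB$. For the second inequality, again $AB \le AF + FB$, and now $B \in R_E$ gives $FB < EB - 1$, so $AB \le AF + FB < AF + 1 + EB$. This establishes both bounds and hence $d_{G'}(A,B) = AB$.

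The argument is short, and the only real subtlety—which I would highlight rather than treat as a genuine obstacle—is that the two inequalities draw on the two hypotheses \emph{separately}: the bound against the $FE$-detour uses $A \in R_E$, while the bound against the $EF$-detour uses $B \in R_E$. In particular, neither membership alone suffices, which matches the intuition preceding the claim that both endpoints must be pulled toward $F$ for the extra edge to be useless between them. Once the case $A,B \in R_E$ is written out, the case $A,B \in R_F$ requires no new ideas: swapping $E$ and $F$ turns the hypotheses into $AF - AE > 1$ and $BF - BE > 1$ and routes both triangle inequalities through $E$ instead.
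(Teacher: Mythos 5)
Your proof is correct and follows essentially the same route as the paper's: both invoke Claim \ref{partition} to get $AE-AF>1$ and $BE-BF>1$, route the triangle inequality through $F$ (i.e., $AB\le AF+FB$), and use the hypothesis on $A$ to dispose of the $AE+1+FB$ term and the hypothesis on $B$ to dispose of the $AF+1+EB$ term. The only difference is cosmetic rearrangement of the inequalities, so nothing further is needed.
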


\begin{proof}
Without loss of generality, let $A,B \in R_E$. Then, we have $AE-AF>1$ and $BE-BF>1$ by Claim \ref{partition}. Therefore,
$$d_{G'}(A, B) = \min(AB,AE+1+FB, AF+1+EB) = AB,$$
because
$$AE+1+FB>AF+2+FB\ge AB+2,$$
and
$$AF+1+EB>AF+2+FB\ge AB+2$$
by the triangle inequality.
\end{proof}

\begin{remark}
\label{relation_classification}
Containment in special regions defines an anti-reflexive, symmetric and anti-transitive (never transitive) relation between pairs of vertices. Containment in normal regions defines a reflexive, symmetric and intransitive (not necessarily transitive) relation between pairs of vertices.
\end{remark}

\begin{proof}
For both special and normal regions (anti-)reflexivity follows from the definition and symmetry follows from the symmetry of distances in both $G$ and $G'$. The anti-transitivity of special regions follows from Claim \ref{same_region}. Indeed, if $A \in R_B$ and $B \in R_C$, then the pairs $(A,B)$ and $(B,C)$ are in different special regions $R_E$ or $R_F$, which implies that $A$ and $C$ must be both in $R_E$ or $R_F$ and we cannot have $A \in R_C$. 
\end{proof}

We are now ready to justify the illustration in Figure \ref{fig:general}.
\begin{remark}
\label{rem:figure}
For a vertex $A \in R_F$ we have
$$F \in R_A \subseteq R_E,$$ 
and similarly, for a vertex $B \in R_E$ we have
$$E \in R_B \subseteq R_F.$$ 
\end{remark}

\begin{proof}
For a vertex $A \in R_F$, the statement $F \in R_A$ follows by the symmetric nature of special regions (Remark \ref{relation_classification}). The $R_A \subseteq R_E$ is a simple consequence of anti-transitivity. Indeed, $R_A \cap N$ is empty by definition, and $R_A \cap R_F$ is empty because we cannot have $Z \in R_F$, $Z \in R_A$ and $A \in R_F$ all hold at the same time.
\end{proof}

Next, we use the anti-transitivity property to make equation \eqref{eq:minAB} more explicit.
\begin{claim}
\label{precise_distance}
For any $A,B \in V$, we have
\begin{equation}
\label{eq:precise_distance}
    d_{G'}(A,B)=\begin{cases} AE + 1 + FB &\mbox{if }  A \in R_B \mbox{ and } A \in R_F \\
AF + 1 + EB & \mbox{if } A \in R_B \mbox{ and } A \in R_E
\\
AB & \mbox{otherwise, i.e., } A \in V \setminus R_B=N_B.
\end{cases}
\end{equation}
\end{claim}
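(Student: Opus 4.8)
The plan is to read the claim straight off the $\min$ expression in Remark \ref{distance}, splitting according to whether the extra edge $e$ actually helps the pair $(A,B)$. If $A \notin R_B$ (the ``otherwise'' line), then by the very definition of the special region we have $d_{G'}(A,B) \ge AB$, and combined with the general inequality $d_{G'}(A,B) \le AB$ this forces $d_{G'}(A,B) = AB$, which is exactly the third case. So the entire content of the claim is to handle $A \in R_B$ and to decide which of the two edge-using terms $AE+1+FB$ and $AF+1+EB$ attains the minimum.

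So suppose $A \in R_B$. First I would observe that the minimum in Remark \ref{distance} is then attained by one of the two edge-using terms: since $A \in R_B$ means precisely $d_{G'}(A,B) < AB$, the term $AB$ cannot be the minimizer, so $d_{G'}(A,B) = \min(AE+1+FB,\, AF+1+EB)$. Next I would note that $A$ cannot be a normal vertex: by symmetry of the special-region relation (Remark \ref{relation_classification}), $A \in R_B$ gives $B \in R_A$, so $R_A \ne \emptyset$ and hence $A \notin N$. By the partition of Claim \ref{partition} this leaves exactly the two cases $A \in R_F$ and $A \in R_E$, matching the first two lines of the claim and showing the case analysis is both exhaustive and disjoint.

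It then remains to show that in each case the correct term wins; I treat $A \in R_F$, the case $A \in R_E$ being symmetric under exchanging the roles of $E$ and $F$. By Claim \ref{partition}, $A \in R_F$ gives $AE - AF \le -2$ (distances are integers). To control $B$, I would invoke anti-transitivity (Remark \ref{relation_classification}, equivalently Remark \ref{rem:figure}): from $B \in R_A$ and $A \in R_F$ we conclude $B \notin R_F$, so by Claim \ref{partition} again $BE - BF \ge -1$, i.e.\ $FB - EB \le 1$. Adding the two estimates gives $(AE+1+FB)-(AF+1+EB) = (AE-AF)+(FB-EB) \le -2+1 = -1 < 0$, so $AE+1+FB$ is strictly the smaller of the two edge-using terms and therefore equals $d_{G'}(A,B)$.

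I expect no serious obstacle: every ingredient is already in place, and the only thing to get right is the bookkeeping of signs when translating ``which endpoint of $e$ is closer to $A$'' into an inequality between the two candidate path lengths. The one point needing a little care is confirming that $A \in R_B$ rules out $A \in N$ so that the first two cases are genuinely exhaustive; after that, the two remaining subcases are identical up to the symmetry $E \leftrightarrow F$.
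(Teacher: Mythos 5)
Your proof is correct and follows essentially the same route as the paper: both reduce to the minimum formula of Remark \ref{distance}, use anti-transitivity (Remark \ref{relation_classification}) to pin down which of $R_E$/$R_F$ contains $B$, and then compare the two edge-using terms via the distance inequalities of Claim \ref{partition}. Your explicit check that $A \in R_B$ rules out $A \in N$ (so the three cases are exhaustive) is a detail the paper leaves implicit, and your bound $FB - EB \le 1$ is slightly weaker than the paper's $BF + 1 < BE$, but both suffice and the argument is otherwise identical.
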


\begin{proof}
We consider only the case $A \in R_B \mbox{ and } A \in R_F$; the second case is symmetric, and the third holds by definition. By the definition of special regions, $A \in R_F$ is equivalent with 
$$d_{G'}(A,F)=\min(AF+1+EF, AE+1+FF)<AF,$$
which further implies $AE+1<AF.$ 

By the anti-transitivity of special regions, $A \in R_B$ and $A \in R_F$ together imply $B\in R_E$, which is equivalent with
$$d_{G'}(B,E)=\min(BE+1+FE, BF+1+EE)<BE,$$
which further implies $BF+1<BE.$

Finally, $A \in R_B$ is equivalent with
$$d_{G'}(A,B)=\min(AE+1+FB, AF+1+EB),$$ which reduces to $d_{G'}(A,B)=AE+1+FB$ since $AE<AF$ and $BF<BE$. 
\end{proof}

We already used the intuition that vertices in special regions ``gain'' from the addition of the extra edge. We formalize this intuition in the next definition.

\begin{definition}[$\Gain$, $\Gain_{\max}$]
\label{def:pre_gain}
Let the decrease in the distance between two vertices due to edge $e$ be denoted as 
$$\Gain(A, B) = AB - d_{G'}(A, B).$$
Let the maximum gain associated to a node $A$ be denoted as
$$\Gain_{\max}(A) = \max\limits_X(\Gain(A,X)).$$
%\{\Gain(A, X) \mid \Gain(A, Y) \leq \Gain(A, X)\quad \forall Y \in V \}.$$
\end{definition}

\begin{remark}
\label{gain_max_remark}
For vertex $A \in R_E$, vertex $E$ gets the maximum benefit of the extra edge to reach $A$, that is, $\Gain_{\max}(A) = \Gain(A, E) = AE - (1 + AF)$. More generally, for any $A \in V$, we have
$$\Gain_{\max}(A) = \max(0, |AF - AE| - 1).$$
We can also observe that, by Claim \ref{partition}, $A \in N$ if and only if $\Gain_{\max}(A) = 0$. A similar statement hold for vertex $F$ instead of $E$.
\end{remark}

\begin{proof}
Suppose for contradiction that there is a node $B \in V$ for which $\Gain(A, B)>\Gain(A, E)$. Since $A \in R_E$, this node $B$ must be in $R_F$, otherwise by Claim \ref{same_region} we have $\Gain(A,B)=0$. Then, the following inequalities must hold:
\begin{align*}
    \Gain(A,B) &>\Gain(A,E) \\
    AB-d_{G'}(A,B) &>AE-d_{G'}(A,E) \\
    AB-(BE+1+AF) &>AE-(1+AF) \\
    AB &>AE+BE.\\
\end{align*}
The last inequality above contradicts the triangle inequality, and the proof is completed.
\end{proof}

% \subsection{Bounds for Metric Dimension on adding an edge}
\section{General graphs}

%The focus of this paper is to understand, how much the metric dimension can change on adding a single edge. We will show that the metric dimension of the new graph can be increase can be exponentially large compared to the original graph  (Section \ref{sec:large_incr_ex}). 

%The surprisingly large increase in the metric dimension is shown by adding a specific edge to a carefully constructed graph. Our main idea is to construct a graph, in which the vertices of $R_F$ can be efficiently distinguished only by some vertices in $R_E$ (but not by vertices in $R_F$). Then, after adding edge $e$, the vertices in $R_E$ can reach $R_F$ on new shortest paths, and they will not distinguish vertices in $R_F$ anymore. Hence $R_F$ will have to be distinguished by vertices in $R_F$, which will require significantly more nodes. 

%We show that our example for a large increase is universal in the sense that we prove a tight upper bound on $\beta(G')$ using (roughly speaking) the metric dimension of $R_E$ and $R_F$ (Section \ref{sec:upper_bound}). We conclude the section by applying our result to d-dimensional grid graphs (Section \ref{sec:d_dimensional}).

%\input{general/lowerbound.tex}
\subsection{An example with an exponential increase in the metric dimension}
\label{sec:large_incr_ex}

In this section, we give a construction for a graph $G^\star$ on $3n + \ceil{\log_2(n)} -1$ nodes with $\beta(G^\star)\le \ceil{\log_2(n)} + 1$, in which the increase in the metric dimension is at least $n - \ceil{\log_2(n)} - 3$ on adding a single (specific) edge. The idea is that in $G^\star$, the vertices of $R_F$ can be efficiently distinguished only by some vertices in $R_E$ (but not by vertices in $R_F$). Then, after adding edge $e$, the vertices in $R_E$ can reach $R_F$ on new shortest paths, and they will not distinguish vertices in $R_F$ anymore. Hence $R_F$ will have to be distinguished by vertices in $R_F$, which will require significantly more nodes.  The construction is shown in Figure \ref{fig:large_incr} for $n=8$.

\begin{figure}[!ht]
\centering
\begin{subfigure}{.5\textwidth}
  \centering
  \includegraphics[width=\linewidth]{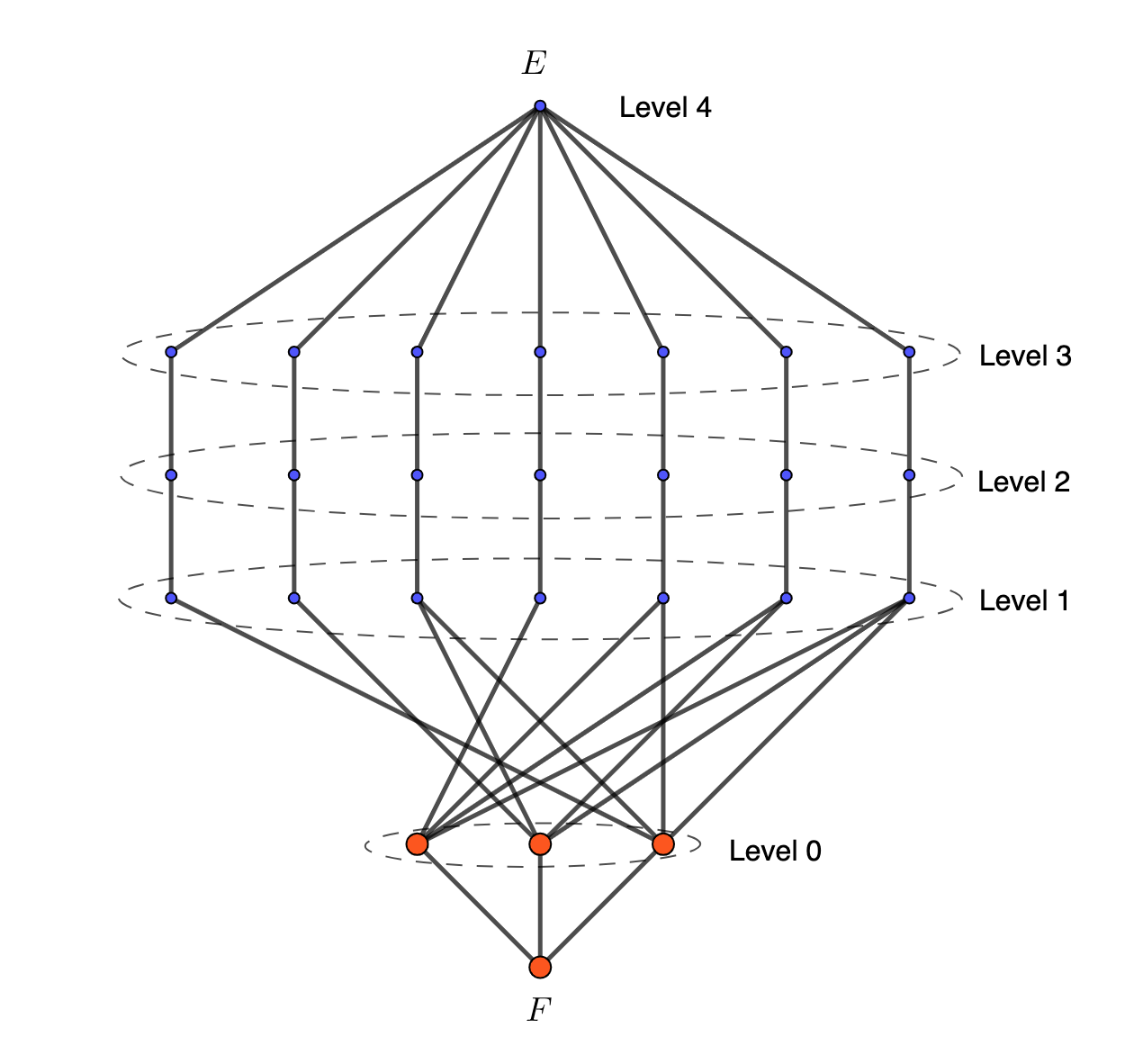}
  \caption{$G$}
  \label{fig:sub1}
\end{subfigure}%
\begin{subfigure}{.5\textwidth}
  \centering
  \includegraphics[width=\linewidth]{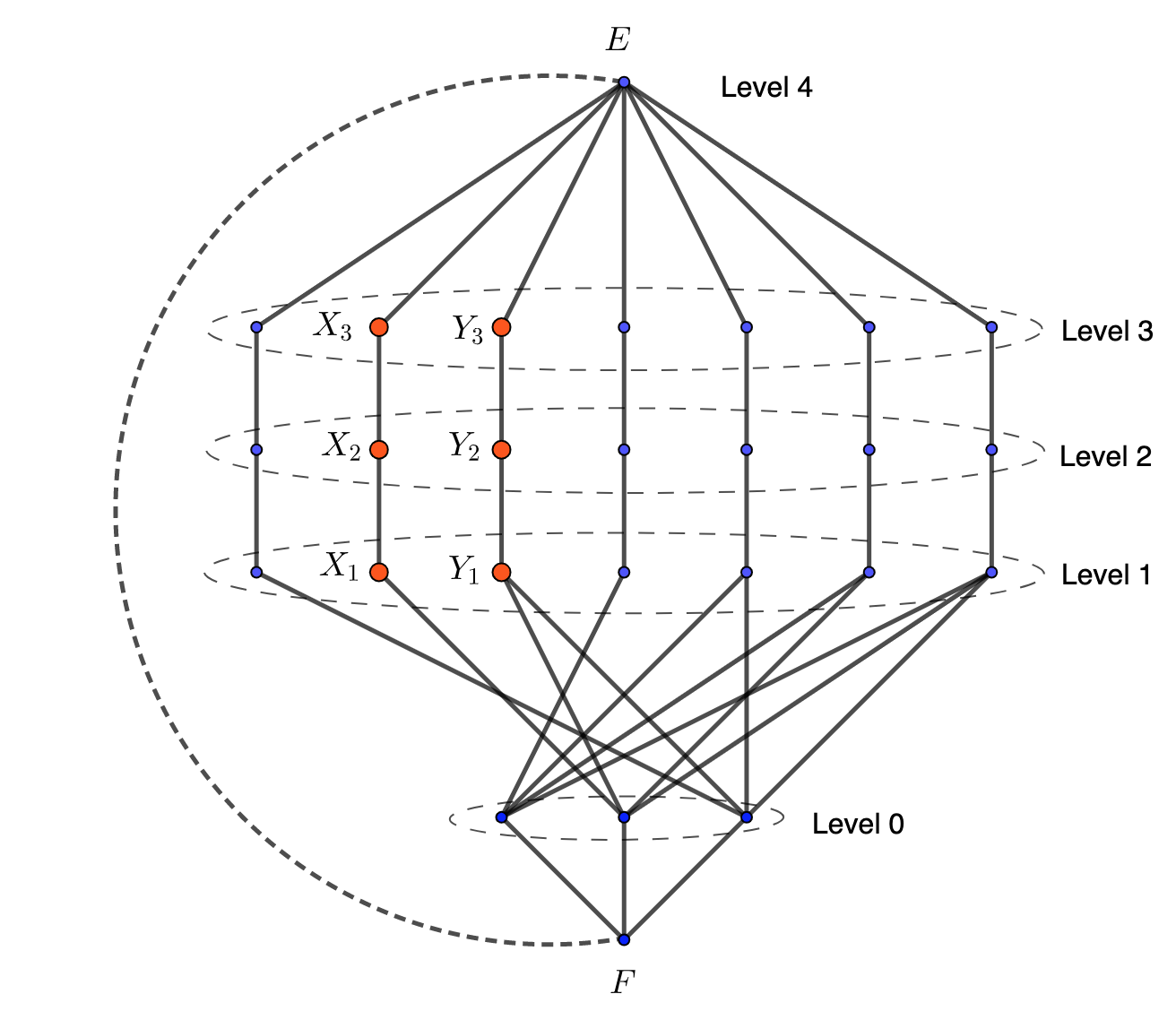}
  \caption{$G'$}
  \label{fig:sub2}
\end{subfigure}
\caption{Example where MD increases by a large amount (for $n = 8$)}
\label{fig:large_incr}
\end{figure}

Graph $G^\star$ has 6 levels indexed by $l \in \{-1,\dots, 4\}$. Levels 1-3 each contain $n-1$ vertices, which are indexed by $i$ for each level. Level 0 contains $\ceil{\log_2(n)}$ vertices indexed by $j$. Levels $-1$ and $4$ contain the single vertices $F=v^{(-1)}_1$ and $E=v^{(4)}_1$. We connect all of the vertices of level $0$ and $3$ to $F$ and $E$, respectively. We connect the vertices of level 1 (respectively, level 2) to the vertices of level 2 (resp., level 3) if and only if the vertices of both levels 1-2 (resp., 2-3) have the same index. Finally, we connect a vertex labelled $i$ in level $1$ to a vertex labelled $j$ in level 0 if and only if the $j^{th}$ bit in the binary representation of $i$ is one. For example, $v^{(1)}_1$ is connected only to $v^{(0)}_{\ceil{\log_2(n)}}$ because binary representation of $1$ is $0 \dots 01$. This construction leads therefore to the following definition.

\begin{definition}[$G^\star$]
For $n>1$, let $G^\star=(V^\star,E_{G^\star})$, with
\begin{align*}
V^\star &= \left\{ v^{(0)}_{j} \mid j \in  \{1, \dots, \ceil{\log_2(n)}\} \right\}\cup \left\{ v^{(l)}_{i} \mid l \in \{1,2,3\}, i \in \{1, \dots, n-1\} \right\} \cup  \{E,F\}, \\
E_{G^\star} &= \left\{ (F,v^{(0)}_j) \right\} \cup \left \{ (v^{(0)}_{j}, v^{(1)}_{i}) \mid \mathrm{bin}(i)_j=1 \} \right\}\cup \left\{ (v^{(l)}_{i},v^{(l+1)}_{i}) \mid l \in \{1,2\}\right\} \cup \left\{ (v^{(3)}_{i},E)\right\}, 
\end{align*}
where $\mathrm{bin}(i)_j$ denotes the $j^{th}$ bit of the binary representation of the number $i$.
\end{definition}

% \begin{figure}[t]
%     \centering
%     \includegraphics[scale = 0.5]{special_graph.png}
%     \caption{Caption}
%     \label{fig:without}
% \end{figure}

% \subsection{First Example}
\begin{claim}
\label{claim:MDlogn}
The set $S^\star=\left\{ v^{(0)}_j \right\} \cup F$ resolves $G^\star$. Consequently, $\beta(G^\star) \leq \ceil{\log_2(n)} + 1$.
\end{claim}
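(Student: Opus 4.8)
The plan is to prove directly that the landmark set $S^\star$ induces pairwise distinct distance vectors, exploiting the layered structure of $G^\star$. The first step is to compute the distances from $F$ alone. Since every edge of $G^\star$ runs between consecutive levels (including the ``apex'' edges joining $F$ to all of level $0$ and $E$ to all of level $3$), the graph is layered, and I expect to check that $d_{G^\star}(F,A)$ equals the level index of $A$ shifted by one, taking the values $0,1,2,3,4,5$ for vertices in levels $-1,0,1,2,3,4$ respectively (every $i\in\{1,\dots,n-1\}$ has a set bit, so every level-$1$ vertex is at distance $2$, and no route through $E$ can be shorter). Thus $F$ by itself recovers the level of each vertex, and any two vertices in different levels are already distinguished.

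It then remains to separate pairs lying in the same level. Levels $-1$ and $4$ are singletons, and within level $0$ the landmark $v^{(0)}_j$ is at distance $0$ from itself and positive distance from every other level-$0$ vertex, so those pairs are trivial. The core of the argument concerns levels $1,2,3$, where I would establish the distance formula
$$d_{G^\star}(v^{(0)}_j, v^{(l)}_i)=\begin{cases} l & \text{if } \mathrm{bin}(i)_j=1, \\ l+2 & \text{if } \mathrm{bin}(i)_j=0, \end{cases}$$
for $l\in\{1,2,3\}$. When bit $j$ of $i$ is set, descending the $i$-th column $v^{(0)}_j - v^{(1)}_i - \dots - v^{(l)}_i$ gives a path of length $l$, which is shortest. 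When bit $j$ of $i$ is not set, $v^{(0)}_j$ is not adjacent to $v^{(1)}_i$, so the shortest route must detour through $F$ to some $v^{(0)}_{j'}$ with $\mathrm{bin}(i)_{j'}=1$ before entering the $i$-th column, costing two extra edges. Consequently the vector of distances from the level-$0$ landmarks to $v^{(l)}_i$ encodes, coordinate by coordinate, the binary representation of $i$, a small value in coordinate $j$ marking a $1$-bit and a large value marking a $0$-bit. Distinct indices $i\in\{1,\dots,n-1\}$ have distinct $\ceil{\log_2(n)}$-bit representations, so these vectors differ, and the level-$0$ landmarks resolve every pair within a fixed level $l\in\{1,2,3\}$.

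Combining the two facts, namely that $F$ separates vertices of different levels while the level-$0$ landmarks separate vertices of the same level, shows that $S^\star$ is a resolving set, so $\beta(G^\star)\le|S^\star|=\ceil{\log_2(n)}+1$.

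The step I expect to require the most care is the verification that the $0$-bit case yields exactly $l+2$, rather than $l+1$ or some other value, and in particular ruling out shortcuts through the far apex $E$. A clean way to handle this is to note that $G^\star$ is bipartite, with the level parity giving the bipartition, so every path between a level-$0$ and a level-$l$ vertex has length congruent to $l$ modulo $2$ and at least $l$. Length exactly $l$ forces a monotone path that stays in a single column from level $1$ onward and hence requires the starting bit $\mathrm{bin}(i)_j$ to be set; whenever it is not set, parity rules out $l+1$, so the distance is at least $l+2$, and the detour through $F$ attains $l+2$. This parity argument simultaneously fixes the lower bound and eliminates any spurious routing through $E$.
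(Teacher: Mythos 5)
Your proof is correct and follows essentially the same strategy as the paper's: the distance from $F$ recovers the level of each vertex, and within a level the landmark $v^{(0)}_j$ gives distance $l$ versus $l+2$ according to whether bit $j$ of the index is set, so differing binary representations are distinguished. The only difference is that you justify the $l$ versus $l+2$ dichotomy carefully (via the layered/bipartite structure and the parity argument ruling out shortcuts through $E$), a verification the paper asserts without detail.
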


\begin{proof}
We need to show that any pair of vertices in $V^\star \setminus S^\star$ are distinguished. There are two possibilities for any pair of distinct vertices: either they are in different levels or in the same level. If they are on different levels, vertex $F$ will distinguish them, because for any $v^{(l)}_i$ with $l \in \{1,2,3,4\}$ we have $d_{G^\star} (v^{(l)}_i, F)=l+1$. If they are on the same level, the binary representations of their index $i$ will differ at at least one position. Let the $j^{th}$ bit of both labels be different. Then, vertex $v^{(0)}_j$ will distinguish them, because its distance to the vertex whose label has the $j^{th}$ bit equal to 1 is two hops shorter than its distance to the vertex whose label has the $j^{th}$ bit equal to 0. Therefore all pairs of points are distinguished, which completes the proof. 
\end{proof}
Now we add an edge $e$ between vertices $E$ and $F$. The resulting graph $G^{\star\prime}$ is shown in Figure \ref{fig:sub2}. 
\begin{claim}
\label{claim:MDnm2}
The metric dimension of graph $G^{\star\prime}$ is at least $n - 2$. 
\end{claim}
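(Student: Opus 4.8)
The plan is to show that once the edge $e=EF$ is added, the $n-1$ level-3 vertices $v^{(3)}_1,\dots,v^{(3)}_{n-1}$ become almost indistinguishable: each landmark can single out at most one of them, so resolving them all forces about $n$ landmarks. First I would determine the three regions of Claim \ref{partition} for this graph. A short distance computation in $G^\star$ gives $v^{(0)}_jE-v^{(0)}_jF=4-1=3$, $v^{(1)}_iE-v^{(1)}_iF=3-2=1$, $v^{(2)}_iE-v^{(2)}_iF=2-3=-1$ and $v^{(3)}_iE-v^{(3)}_iF=1-4=-3$, so that
$$R_E=\{F\}\cup\{v^{(0)}_j\},\qquad N=\{v^{(1)}_i\}\cup\{v^{(2)}_i\},\qquad R_F=\{E\}\cup\{v^{(3)}_i\}.$$
In particular all level-3 vertices, together with $E$, lie in $R_F$, while the levels $1$ and $2$ consist entirely of normal vertices.

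The core of the argument is to compute $d_{G^{\star\prime}}(X,v^{(3)}_i)$ as a function of $i$ for every vertex $X$, and to check that it falls into one of two types. For pairs that stay inside $R_F$ I would use Claim \ref{same_region}, giving $d_{G^{\star\prime}}(E,v^{(3)}_i)=1$ and $d_{G^{\star\prime}}(v^{(3)}_k,v^{(3)}_i)=2$ for $i\ne k$. For $X\in\{v^{(1)}_k,v^{(2)}_k\}\subseteq N$ the distances are unchanged from $G^\star$ (normal vertices), and a direct count gives $d_{G^{\star\prime}}(v^{(1)}_k,v^{(3)}_i)\in\{2,4\}$ and $d_{G^{\star\prime}}(v^{(2)}_k,v^{(3)}_i)\in\{1,3\}$, taking the small value exactly when $i=k$. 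Finally, for $X\in R_E$ I would apply Claim \ref{precise_distance} (equivalently Remark \ref{distance}) to get $d_{G^{\star\prime}}(F,v^{(3)}_i)=2$ and $d_{G^{\star\prime}}(v^{(0)}_j,v^{(3)}_i)=3$, both independent of $i$. The step I expect to be the main obstacle, and the conceptual heart of the claim, is this last collapse $d_{G^{\star\prime}}(v^{(0)}_j,v^{(3)}_i)=3$: in $G^\star$ the level-0 vertices distinguished vertices efficiently through their binary index (this is exactly how Claim \ref{claim:MDlogn} resolved the graph), but once $e$ is present the third term $v^{(0)}_jF+1+Ev^{(3)}_i=1+1+1=3$ of Remark \ref{distance} is attained for every $i$ and dominates both $d_G(v^{(0)}_j,v^{(3)}_i)$ and the other candidate, so the bit information is washed out. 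Confirming that this minimum really equals $3$ in both the $\mathrm{bin}(i)_j=1$ and $\mathrm{bin}(i)_j=0$ cases is the one place that needs a careful shortest-path check.

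Granting this distance table, the lower bound is immediate. Say an index $k\in\{1,\dots,n-1\}$ is \emph{covered} by a set $R$ if $R$ contains one of $v^{(1)}_k,v^{(2)}_k,v^{(3)}_k$. By the table, a landmark $X$ satisfies $d_{G^{\star\prime}}(X,v^{(3)}_i)=d_{G^{\star\prime}}(X,v^{(3)}_{i'})$ whenever the index of $X$ is neither $i$ nor $i'$, and the index-free vertices $E,F,v^{(0)}_j$ never separate such a pair. Hence if two distinct indices $i,i'$ were both uncovered, no landmark would distinguish $v^{(3)}_i$ from $v^{(3)}_{i'}$, so $R$ could not resolve $G^{\star\prime}$. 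Thus any resolving set leaves at most one index uncovered, i.e.\ covers at least $n-2$ of the $n-1$ indices; since each landmark carries at most one index and distinct covered indices require distinct landmarks, this gives $|R|\ge n-2$ and therefore $\beta(G^{\star\prime})\ge n-2$.
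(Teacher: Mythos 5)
Your proof is correct and follows essentially the same route as the paper: both establish that only the indexed vertices $v^{(l)}_i$ with $l\in\{1,2,3\}$ and $i\in\{j,k\}$ can distinguish $v^{(3)}_j$ from $v^{(3)}_k$ (since every other vertex ends up equidistant from all of level 3 once shortest paths collapse through $E$), and then conclude with the star-graph covering argument giving $|R|\ge n-2$. Your version is simply more explicit, working out the full distance table via Claims \ref{partition}, \ref{same_region} and \ref{precise_distance} where the paper asserts the collapse directly.
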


\begin{proof}
Notice that the set of nodes that can distinguish $v^{(3)}_j$ and $v^{(3)}_k$ is
$$\left\{ v^{(l)}_i \mid l \in \{1,2,3\}, i\in \{j,k\} \right\}.$$
This is because all other nodes can reach both $v^{(3)}_j$ and $v^{(3)}_k$ through $E$ on their shortest path and $E$ cannot distinguish any pair of nodes on level 3. Hence, distinguishing nodes on level 3 is equivalent to resolving a star graph, and the metric dimension of $G^{\star\prime}$ is at least $n - 2$. \end{proof}

Combining Claims \ref{claim:MDlogn} and \ref{claim:MDnm2}, we observe that the increase in the metric dimension of $G^\star$ on adding $e$ is at least $n - \ceil{\log_2(n)} - 3$. 
\subsection{Bounds on the change of the metric dimension}
\label{sec:upper_bound}
It has been shown in \cite{eroh2015effect} that if $G'$ is obtained from $G$ by adding an extra edge, then $\beta(G')\ge\beta(G)-2$, and if there are no even cycles in $G'$, then  $\beta(G')\le \beta(G)+1$. However, in the previous section we saw an example where $\beta(G')$ was exponentially larger than $\beta(G)$. In this section we provide an upper bound on $\beta(G')$ in terms of the MD of the subgraphs of $G$, which holds for all graphs $G'$.

\begin{lemma}
\label{upper_bound}
Let $G = (V, E)$ be a connected graph, and let $G'$ be the graph obtained by adding edge $e$ between vertices $E$ and $F$ as before. Let $V_1 = \{U \in V \mid d_G(U, E) \leq d_G(U, F) \}$ and $V_2 = \{U \in V \mid d_G(U, E) \geq d_G(U, F) \}$. Let $G_1$ and $G_2$ be subgraphs of $G$ induced on vertex sets $V_1$ and $V_2$, respectively. Then, $$\beta(G') \leq \beta(G_1) + \beta(G_2) + 2.$$
\end{lemma}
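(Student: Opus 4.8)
The plan is to exhibit an explicit resolving set of $G'$ of the required size. Let $R_1$ and $R_2$ be minimum resolving sets of $G_1$ and $G_2$, and set $W = R_1 \cup R_2 \cup \{E,F\}$, so that $|W| \le \beta(G_1)+\beta(G_2)+2$. It then suffices to show that $W$ distinguishes every pair $A \ne B$ in $G'$. The starting point is the explicit form of the distances to the two endpoints, which follow directly from Remark \ref{distance}: for every vertex $A$ one has $d_{G'}(A,E)=\min(AE,AF+1)$ and $d_{G'}(A,F)=\min(AF,AE+1)$, since $EE=FF=0$ and the terms containing $EF$ are never the minimizers here.

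First I would use $E$ and $F$ to separate the two sides. A short computation with the formulas above shows that $d_{G'}(A,E)-d_{G'}(A,F)$ equals $+1$ when $A\in V_2\setminus V_1$ (i.e.\ $AE>AF$), equals $-1$ when $A\in V_1\setminus V_2$, and equals $0$ when $AE=AF$, i.e.\ $A\in V_1\cap V_2$. Consequently, if $A$ and $B$ yield different such signs, then the equalities $d_{G'}(A,E)=d_{G'}(B,E)$ and $d_{G'}(A,F)=d_{G'}(B,F)$ cannot both hold, so $E$ or $F$ already distinguishes the pair. This reduces the problem to pairs whose two vertices share the same sign; such a pair lies entirely in $V_2$ when the common sign is $+$ or $0$, and entirely in $V_1$ when it is $-$ or $0$. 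By the symmetric roles of $E$ and $F$ it is enough to treat a pair $A,B\in V_2$.

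For such a pair the crucial simplification is that the extra edge is irrelevant \emph{inside} a single side. Writing $V_2 = R_E \cup (N\cap V_2)$, any two vertices of $V_2$ are either both in $R_E$, in which case $d_{G'}=d_G$ between them by Claim \ref{same_region}, or at least one of them is normal, in which case $d_{G'}=d_G$ because distances to and from a normal vertex are unchanged. Hence $d_{G'}(X,A)=d_G(X,A)$ for all $X,A\in V_2$; in particular $d_{G'}(F,A)=AF$ recovers the exact distance to $F$. I would then try to distinguish $A$ and $B$ using $R_2$ together with $F$: since $R_2$ resolves $G_2$ there is a landmark $X\in R_2\subseteq V_2$ separating $A$ and $B$ in $G_2$, and the goal is to upgrade this into a separation under $d_{G'}$.

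The main obstacle is exactly this last step, because the distance $d_{G_2}$ certified by the resolving set $R_2$ is the distance in the \emph{induced} subgraph $G_2$, whereas the relevant quantity is $d_{G'}(X,A)=d_G(X,A)$, the distance in the whole graph. These can differ: a shortest $G$-path between two vertices of $V_2$ may leave $V_2$ by dipping into the opposite side $V_1\setminus V_2$, so a landmark separating $A,B$ in $G_2$ need not separate them under $d_{G'}$. Overcoming this is where the region analysis of Section \ref{sec:changes} and the remaining landmarks must be brought to bear: one has to argue that whenever $R_2$ fails to separate $A$ and $B$ under the ambient distance, the discrepancy is forced by a detour through $V_1$ that is itself detected either by $F$ (which pins down $AF$ exactly) or by the landmarks of $R_1$ resolving the side through which the detour passes. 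Making this quantitative --- showing that the information lost in passing from $d_{G_2}$ to $d_G$ on $V_2$ is always recoverable from $R_1\cup\{E,F\}$ --- is the technical heart of the argument and the part I expect to require the most careful case analysis.
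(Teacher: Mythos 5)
Your proposal takes exactly the paper's route: the same candidate set $S_1\cup S_2\cup\{E,F\}$, the same use of the endpoints $E$ and $F$ to dispose of pairs that do not lie on a common side (your $\pm1/0$ sign computation is an equivalent repackaging of the contradiction argument in the paper's Case~2), and the same appeal to Claim~\ref{same_region} and the normal-vertex property to get $d_{G'}=d_G$ for pairs lying inside $V_1$ or inside $V_2$. Everything you actually prove is correct.

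The step you leave open --- upgrading ``$X\in S_2$ separates $A,B$ under $d_{G_2}$'' to ``$X$ separates $A,B$ under $d_{G'}=d_G$'' --- is a genuine gap, and it cannot be filled by a routine argument, because the underlying assertion (that the induced subgraph $G_i$ is isometric in $G$) is false for the sets $V_1,V_2$ of this lemma. Take $G$ to be a $6$-cycle $X\,U\,A\,P_2\,E\,P_1$ with one pendant vertex $F$ attached to $U$. Then $XE=XF=AE=AF=2$ and $UE=3>1=UF$, so $V_1=\{X,P_1,E,P_2,A\}$ and $G_1$ is the path $A\,P_2\,E\,P_1\,X$. Its endpoint $X$ is a minimum resolving set of $G_1$ and separates $E$ from $A$ there ($d_{G_1}$-distances $2$ and $4$), yet $d_G(X,E)=d_G(X,A)=2$, so $X$ separates neither in $G$ nor in $G'$. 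This confirms that your instinct about where the difficulty lies is exactly right; but it also shows something you could not have known: the paper's own proof does not overcome this difficulty either. Its Case~1 asserts that the landmark resolving the pair in $G_1$ resolves it in $G'$, which silently identifies $d_{G_1}$ with $d_G$ on $V_1$; on the example above that inference is invalid (the final set $S$ happens to remain resolving there only because $E$ is itself a landmark, and nothing in the written argument excludes the same failure for a pair of non-landmarks in a larger graph). A complete proof needs either a strengthened hypothesis --- e.g., replacing $\beta(G_i)$ by the minimum number of landmarks in $V_i$ that distinguish all pairs of $V_i$ with respect to $d_G$ --- or a separate isometry argument; the latter is exactly what the paper supplies in its grid application, where Part~3 of the proof of Lemma~\ref{lemma:g1_d} shows $d_{G_1}(X,O_j)=d_G(X,O_j)$ for the chosen landmarks. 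In short: your attempt is incomplete, but it stalls at precisely the point where the paper's general proof is also unjustified, and unlike the paper it says so explicitly.
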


\begin{proof}
Let $S_1$ and $S_2$ be the resolving sets of minimum size of graphs $G_1$ and $G_2$, respectively. We prove that $S = S_1 \cup S_2 \cup \{E, F\}$ is a resolving set of $G'$. Let $N_1 = \{ U \in V \mid  0 \leq d_G(U, F) - d_G(U, E)  \leq 1\}$ and $N_2 = \{ U \in V \mid  0 \leq d_G(U, E) - d_G(U, F)  \leq 1\}$. By Claim \ref{partition}, $N_1, N_2 \subseteq N$, where $N$ is the normal region. Consider two vertices $X$ and $Y$. There are two cases:

\textbf{Case 1:} $X, Y \in V_1$ or $X, Y \in V_2$.

Without loss of generality, let $X, Y \in V_1$. Let $A \in S_1$ be the vertex which resolves $X$ and $Y$ in $G_1$. Since $V_1 = R_F \cup N_1$ and $V_2 = R_E \cup N_2$, by Claim \ref{same_region}, $d_G(X, A) = d_{G'}(X, A)$ and  $d_G(Y, A) = d_{G'}(Y, A)$, hence $X$ and $Y$ are resolved by $A$ in $G'$, too. 
    
\textbf{Case 2:} $X \in V_1\setminus V_2$ and $Y \in V_2 \setminus V_1$ or vice and versa.
    
Without loss of generality, let $X \in V_1\setminus V_2$ and $Y \in V_2 \setminus V_1$. Note that by definition, $V_1\setminus V_2$ and $V_2\setminus V_1$ contain the nodes that are closer to $E$ and $F$, respectively. Hence, we can always go through $e$ when going from $V_1\setminus V_2$ to $F$ or $V_2\setminus V_1$ to $E$ on a shortest path, that is
    \begin{align}\label{distXF}
      d_{G'}(X, F) &= 1 + d_{G'}(X, E)  \\
    \label{distYE}
    d_{G'}(Y, E) &= 1 + d_{G'}(Y, F) . 
    \end{align} 
Assume for contradiction that none of $E$ and $F$ distinguish $X$ and $Y$. This implies that $d_{G'}(X, E) = d_{G'}(Y, E)$ and $d_{G'}(X, F) = d_{G'}(Y, F)$. Adding both equations gives $$d_{G'}(Y, E) + d_{G'}(X, F) = d_{G'}(Y, F) + d_{G'}(X, E).$$ Substituting values from \eqref{distXF} and \eqref{distYE} gives a contradiction. Hence, either $E$ or $F$ will distinguish these two vertices. 
    
For every possible pair of vertices we showed a distinguishing vertex in $S$. Finally,
\begin{align*}
    \beta(G') & \leq |S| = |S_1| + |S_2| + 2 = \beta(G_1) + \beta(G_2) + 2.
\end{align*}
\end{proof}

Next we present a graph $G^{\star\star}$ for which the upper bound of Lemma \ref{upper_bound} is achieved. The graph has 74 vertices and it is drawn on Figure~\ref{fig:ap2}. The four solid black nodes labelled as $E$ in the figure represent a single vertex $E$ in the graph $G^{\star\star}$. Similarly, the four solid black nodes labelled $F$ represent vertex $F$. All other nodes shown in the figure represent distinct nodes. The graph $G^{\star\star\prime}$ is obtained by adding an edge between vertices $E$ and $F$. In this setting, $G^{\star\star}_1$, defined in Lemma \ref{upper_bound}, will be the sub-graph induced by nodes having green and yellow outlines and $G^{\star\star}_2$ will be the sub-graph induced by nodes having orange and red outlines. 

\begin{figure}[!ht]
    \centering
    \includegraphics[scale = 0.4]{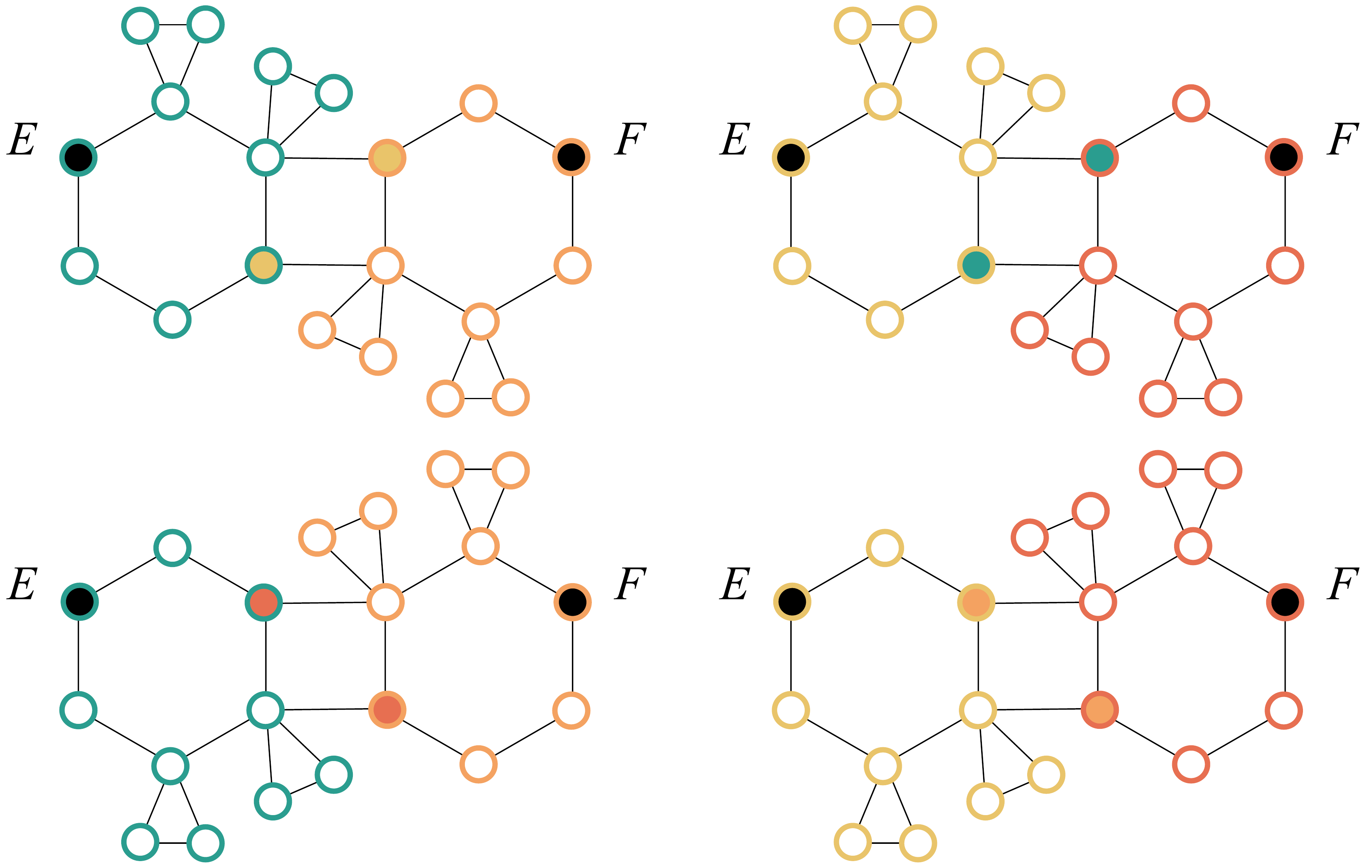}
    \caption{Graph $G^{\star\star}$ and points $E$, $F$ for which upper bound is achieved}
    \label{fig:ap2}
\end{figure}

\begin{claim}
\label{claim:G**}
$\beta(G^{\star\star}_1) = \beta(G^{\star\star}_2) = 8$ and $\beta(G^{\star\star\prime}) = 18$. 
\end{claim}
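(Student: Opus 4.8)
The plan is to prove the three equalities more or less independently, observing first that the upper bound $\beta(G^{\star\star\prime}) \le 18$ is automatic once the two subgraph dimensions are known. By construction the green/yellow and orange/red vertex sets are exactly the sets $V_1$ and $V_2$ of Lemma~\ref{upper_bound}, so as soon as we establish $\beta(G^{\star\star}_1) = \beta(G^{\star\star}_2) = 8$, Lemma~\ref{upper_bound} yields $\beta(G^{\star\star\prime}) \le 8 + 8 + 2 = 18$ with no further work. Moreover $G^{\star\star}$ is symmetric under the exchange $E \leftrightarrow F$, which swaps $G^{\star\star}_1$ and $G^{\star\star}_2$, so it suffices to treat $G^{\star\star}_1$ and the claim for $G^{\star\star}_2$ follows verbatim. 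All the genuine work is therefore concentrated in (a) computing $\beta(G^{\star\star}_1)$ and (b) proving the matching lower bound $\beta(G^{\star\star\prime}) \ge 18$.

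For $\beta(G^{\star\star}_1) = 8$ I would argue the two directions separately. For the upper bound I would read off an explicit $8$-vertex candidate resolving set from the figure and verify, by computing the distance vectors of all vertices of $V_1$ to these eight landmarks, that no two coincide. For the lower bound $\beta(G^{\star\star}_1) \ge 8$ I would use a \emph{twin} argument: I would locate inside $G^{\star\star}_1$ families of mutually equidistant vertices, i.e.\ vertices $w$ with the property that $d_{G_1}(w,z)$ is the same for every member $w$ of the family and every outside vertex $z$, and which are pairwise at a common distance. Any resolving set must contain all but one vertex of such a family, since an outside landmark cannot separate any two of them and a landmark placed at one member only separates that member from the rest. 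The construction is arranged (consistently with the four copies of $E$ and $F$ in the drawing) so that the ``all but one'' deficits of these disjoint families sum to exactly $8$, matching the explicit resolving set and hence giving $\beta(G^{\star\star}_1)=8$; the symmetric count gives $\beta(G^{\star\star}_2)=8$.

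The main obstacle is the lower bound $\beta(G^{\star\star\prime}) \ge 18$, where one must rule out that a single landmark discharges obligations in both regions once the edge is present. The crucial leverage comes from Section~\ref{sec:changes}. Take a pair $X,Y$ lying in $R_F$ (inside $V_1$), and let $A$ be any landmark in $R_E \cup \{E,F\}$. If $A$ uses the new edge to reach $X$, then $A$ also uses it to reach $Y$ (twins have identical distance profiles to $E$ and $F$, hence identical membership in $R_X$ and $R_Y$), and by Claim~\ref{precise_distance} one gets $d_{G'}(A,X)=AF+1+EX$ and $d_{G'}(A,Y)=AF+1+EY$; if $A$ does not use the edge, then $d_{G'}(A,X)=AX$ and $d_{G'}(A,Y)=AY$. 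In either case $A$ separates $X$ from $Y$ precisely when $E$ does, so the whole opposite region $R_E \cup \{E,F\}$ collapses to the single ``virtual landmark'' $E$ for the purpose of separating $R_F$-pairs (and $E$ itself, lying in $R_F\subseteq V_1$, acts as in $G_1$ by Claim~\ref{same_region}). Since the $V_1$-twins are placed equidistant from $E$ and $F$, none of $R_E\cup\{E,F\}$ can split them, and so all $8$ landmarks resolving the $V_1$-twins must lie among the useful vertices of $V_1$; symmetrically $8$ must lie in $V_2$, and these two groups are disjoint, giving $16$. To reach $18$ I would finally exhibit one cross-region pair separable only by $E$ and another separable only by $F$ — vertices that the sixteen twin-landmarks, collapsed as above, leave tied — forcing $E$ and $F$ themselves into every resolving set.

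I expect the delicate point to be exactly this last ``$+2$'': one must check that the two cross-region requirements are genuinely independent of the sixteen twin-constraints and of each other, so that no candidate $17$-element set can cover all eighteen obligations at once. This is the kind of exhaustive, figure-dependent case analysis the introduction warns about for lower bounds, and carrying it out amounts to confirming, pair by pair, that every restriction of the distance vectors to a putative $17$-set collides somewhere.
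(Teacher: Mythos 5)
Your handling of the first two equalities and of the upper bound for $G^{\star\star\prime}$ is essentially the paper's: the paper also uses the isomorphism of the two halves, proves $\beta(G^{\star\star}_1)\le 8$ by selecting one degree-2 vertex in each of the $8$ triangles, gets the matching lower bound because the two degree-2 vertices of each triangle are twins (so any resolving set contains at least one vertex per triangle), and then invokes Lemma~\ref{upper_bound} for $\beta(G^{\star\star\prime})\le 18$, exactly as you propose. Likewise, your first $16$ forced landmarks in $G^{\star\star\prime}$ (one per triangle/twin family) coincide with the paper's count.

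The gap is in your plan for the remaining ``$+2$'' of the lower bound $\beta(G^{\star\star\prime})\ge 18$. You try to force $E$ and $F$ themselves into every resolving set by exhibiting a cross-region pair separable \emph{only} by $E$ and another separable \emph{only} by $F$. This is strictly stronger than what the claim needs, and nothing supports it. First, your ``collapse'' of $R_E\cup\{E,F\}$ to a single virtual landmark $E$ is justified only for pairs $X,Y$ with identical distances to $E$ and $F$: that is how you rule out the mixed case $A\in R_X$, $A\notin R_Y$ (via the twin property). But the pairs relevant to the $+2$ step cannot be twins — twins are already charged to the $16$ — so for them the equivalence ``$A$ separates $X,Y$ iff $E$ does'' breaks down, and with it the claim that only $E$ can do the job. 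Second, cross-region pairs are exactly the pairs that \emph{both} $E$ and $F$ are guaranteed to separate (this is Case~2 in the proof of Lemma~\ref{upper_bound}), so a pair separated by $E$, yet by no other vertex and in particular not by $F$, would be a very special object that you never exhibit and that the graph need not contain. The paper's route is different and avoids this: it identifies four specific \emph{within-region}, non-twin pairs (the solid colored pairs in Figure~\ref{fig:ap2}) that remain unresolved under \emph{every} choice of the $16$ triangle vertices, and then shows that any single additional vertex fails at least one of the four (a vertex outlined in a given color never distinguishes the solid pair of that color). That yields ``at least two extra vertices'' by a covering argument, without ever asserting that the two extras must be $E$ and $F$ — an assertion your approach obliges you to prove and which may simply be false.
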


\begin{proof}
Notice that $G^{\star\star}_1$ and $G^{\star\star}_2$ are isomorphic, hence their metric dimensions must be equal as well. First we show $\beta(G^{\star\star}_1) \le 8$. Indeed we have $8$ triangles in $G^{\star\star}_1$, and selecting one degree 2 vertex in each triangle is enough to distinguish any two vertices. To show $\beta(G^{\star\star}_1) \ge 8$, observe that we need to select one vertex from each of the triangles. The equality $\beta(G^{\star\star}_1) = \beta(G^{\star\star}_2) = 8$ together with Lemma \ref{upper_bound} proves $\beta(G^{\star\star\prime}) \le 18$.

Next, we show that $\beta(G^{\star\star\prime}) \ge 18$. Again, notice that $G^{\star\star\prime}$ contains $16$ triangles, and we must select a vertex in each of them. Notice that even after we selected these $16$ nodes, the solid colored pairs in Figure \ref{fig:ap2} are not distinguished. Moreover, it is not possible to distinguish all $4$ of these solid colored pairs by adding a single vertex to the set. Indeed, if any of the green stroked nodes are selected, the green solid pair is not distinguished. A similar argument holds for all other colors. This shows that we must add at least two nodes to the initial $16$, and the metric dimension of $G^{\star\star\prime}$ is at least 18, which completes the proof.
\end{proof}

\section{Grid graph}
\label{sec:grid_graph}
The main technical result of this paper is on the metric dimension of the grid graph augmented with one edge.
\begin{definition}
Let the $d$-dimensional grid graph with side lengths $(n_1,n_2, \dots,n_d)$ be the Cartesian product of $d$ paths indexed by $i$ with length $n_i$.
\end{definition}

Let us represent each vertex $A$ of the grid in a $d$-dimensional space as $(x_A^{(1)}, x_A^{(2)},...,x_A^{(d)})$ where $1 \leq x_A^{(i)} \leq n_i$ for $i \in \{1,...,d\}$. For grid $G$ and vertices $A$ and $B$, we denote the distance $$d_G(A, B) = AB = \sum\limits_{i=1}^d|x_A^{(i)} - x_B^{(i)}|.$$

We state and prove the general result for $d$-dimensional grid graphs in Section \ref{sec:d_dimensional}, and we focus on the case of the 2-dimensional grid for more precise results in Section \ref{sec:2dimensional}.

\subsection{The $d$-dimensional grid}
\label{sec:d_dimensional}

% TODO: Satvik
We start by understanding the MD of the $d$-dimensional grid without any extra edges.
The paper \cite{khuller1996landmarks} claims that the MD of a $d$-dimensional grid is $d$, however, \cite{caceres2007metric} shows by computer search that this statement is false for hypercubes of dimensions $5\le d \le 8$. It is not difficult to show that $d$ is an upper bound, but it is believed asymptotically not to be tight when the side lengths are small. The paper \cite{sebHo2004metric} claims without proof that if all side lengths are $n_i=n$, then
\begin{equation}
    \limsup\limits_{d\rightarrow \infty} \frac{\beta(G)\log_n (d)}{d} \le 2,
\end{equation}
and they also prove
\begin{equation}
    \label{}
    \liminf\limits_{d\rightarrow \infty} \frac{\beta(G)\log_n (d)}{d} \ge 1.
\end{equation}

However, when the side length $n$ is large, then the MD of $d$-dimensional grid is exactly $d$, which was shown in \cite{geneson2020extremal}. Before stating this lower bound on $n$ for the MD to be exactly $d$, we include a non-asymptotic lower bound on the MD for grids with general side lengths.

\begin{lemma}
\label{lem:general_lower}
Let $G$ be a grid of dimension $d$ with side lengths $(n_1,n_2, \dots,n_d)$, and let us denote $N_\Sigma=\sum_i n_i$ and $N_\Pi=\prod_i n_i$. Then 
\begin{equation}
    \label{ddim_md_lb}
    \beta(G) \ge \frac{\log(N_\Pi)}{\log(N_\Sigma - d + 1)}.
\end{equation}

\end{lemma}

\begin{proof}
The distances in $G$ range from $0$ to $\sum_i(n_i-1)$, which implies a total number of $(N_\Sigma - d + 1)^{\beta(G)}$ possible distinct distance vectors. Since the distance vectors must be unique, the number of possible distinct vectors must be at least as large as the total number of vertices in $G$, or formally $(N_\Sigma - d + 1)^{\beta(G)} \geq N_\Pi $. Taking the logarithm of both sides and rearranging the terms gives the desired result.
\end{proof}

The lower bound on $n$ for the MD to be exactly $d$ can be found as a corollary of Lemma \ref{lem:general_lower}.

\begin{corollary}[Theorem 5.1 \cite{geneson2020extremal}]
\label{cor:uniform_lower}
Let $G$ be a grid of dimension $d$ with equal side lengths $(n,n, \dots,n)$. If $n\ge d^{d-1}$, then $\beta(G) = d$. 
\end{corollary}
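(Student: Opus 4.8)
The plan is to derive the corollary directly from Lemma \ref{lem:general_lower}, combined with the already-stated fact that $d$ is always an upper bound for the metric dimension of a grid. Since $\beta(G)\le d$ is given, it suffices to prove the matching lower bound $\beta(G)\ge d$; and because $\beta(G)$ is an integer, this is equivalent to establishing the strict inequality $\beta(G)>d-1$. For equal side lengths we have $N_\Pi=n^d$ and $N_\Sigma=dn$, so Lemma \ref{lem:general_lower} specializes to
\[
\beta(G)\ \ge\ \frac{\log(n^d)}{\log(dn-d+1)}\ =\ \frac{d\log n}{\log(dn-d+1)}.
\]
The entire proof then reduces to showing that this explicit quantity exceeds $d-1$ as soon as $n\ge d^{d-1}$.

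I would dispose of the trivial case $d=1$ first (a path has metric dimension $1=d$, and the hypothesis $n\ge 1$ is vacuous), and assume $d\ge 2$ thereafter. The key is a two-step chain of inequalities. First, since $d\ge 2$ we have $dn-d+1<dn$, which makes the denominator strictly smaller and hence
\[
\frac{d\log n}{\log(dn-d+1)}\ >\ \frac{d\log n}{\log(dn)}\ =\ \frac{d\log n}{\log d+\log n}.
\]
Second, the function $x\mapsto \tfrac{dx}{\log d+x}$ is increasing, since its derivative equals $\tfrac{d\log d}{(\log d+x)^2}>0$, so substituting the hypothesis $\log n\ge (d-1)\log d$ gives
\[
\frac{d\log n}{\log d+\log n}\ \ge\ \frac{d(d-1)\log d}{\log d+(d-1)\log d}\ =\ \frac{d(d-1)\log d}{d\log d}\ =\ d-1.
\]
Chaining the two displays yields $\beta(G)>d-1$, hence $\beta(G)\ge d$, and together with the upper bound this forces $\beta(G)=d$.

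The one point requiring genuine care — and the only place the exact threshold $d^{d-1}$ is exploited — is that the final inequality must be \emph{strict}. At the boundary $n=d^{d-1}$ the second step above is actually an equality, so all of the strictness must come from the first step, that is, from the $-d+1$ correction in the denominator supplied by Lemma \ref{lem:general_lower}. This is precisely why I would keep $dn-d+1$ intact rather than immediately bounding it by $dn$: the positive gap $\log(dn)-\log(dn-d+1)$ is exactly what pushes $\beta(G)$ strictly above $d-1$ and thereby rules out the integer value $d-1$. I expect no real obstacle beyond carefully tracking the directions of the inequalities and confirming the monotonicity claim, since everything else is an elementary computation once the lemma has been specialized to equal side lengths.
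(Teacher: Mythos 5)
Your proposal is correct and follows essentially the same route as the paper: specialize Lemma \ref{lem:general_lower} to equal side lengths, use the strict gap between $\log(dn-d+1)$ and $\log(dn)$ to get a strict inequality, invoke the hypothesis $n\ge d^{d-1}$ to reach $d-1$, and conclude $\beta(G)=d$ by integrality together with the known upper bound. The only differences are cosmetic — you verify the final inequality via monotonicity of $x\mapsto dx/(\log d+x)$ where the paper rewrites $n\ge d^{d-1}$ as $n^{d/(d-1)}\ge nd$ and takes logarithms, and you treat $d=1$ separately, which the paper glosses over.
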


\begin{proof}
The assumption $n \ge d^{d-1}$ is equivalent to $n^\frac{d}{d-1} \ge nd$, which, by taking the logarithm of both sides, gives  
\begin{equation}
    \label{eq:nd_assumption_log}
    \frac{d}{d-1}\log(n) \ge \log(nd).
\end{equation}

Combining inequalities \eqref{ddim_md_lb} and \eqref{eq:nd_assumption_log} gives

\begin{equation}
    \beta(G) \stackrel{\eqref{ddim_md_lb}}{\ge} \frac{\log(N_\Pi)}{\log(N_\Sigma - d + 1)} > \frac{\log(N_\Pi)}{\log(N_\Sigma)} = \frac{d\log(n)}{\log(nd)} \stackrel{\eqref{eq:nd_assumption_log}}{\ge} d-1.
\end{equation}

Since it is well established that the MD of the $d$-dimensional grid is upper bounded by $d$, the proof is completed. 
\end{proof}

We need a slightly more technical lemma before stating our main results on the MD of the grid with an extra edge.

\begin{lemma}
\label{lemma:g1_d}
Let $G = (V, E_G)$ be a grid graph of dimension $d$ with side lengths $(n_1, n_2,...,n_d)$. Let $E$ and $F$ be the endpoints of the extra edge $e$. As defined in Lemma \ref{upper_bound}, let $V_1 =  \{U \in V \mid UE \leq UF \}$. Let $G_1$ be the subgraph of $G$ induced on $V_1$. Then $\beta(G_1) \leq d$. 
\end{lemma}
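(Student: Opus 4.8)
The plan is to exhibit an explicit resolving set of $G_1$ of size $d$ consisting of $d$ suitably chosen \emph{corners} of the grid (vertices whose $i$-th coordinate is either $1$ or $n_i$). Two ingredients are needed. First, I must argue that distances \emph{inside} $G_1$ coincide with distances in $G$, i.e.\ $d_{G_1}(A,B)=AB$ for all $A,B\in V_1$; this lets me reason with the familiar $\ell_1$ grid metric rather than with the induced-subgraph metric. Second, I must show that $d$ corners lying in $V_1$ can be chosen so that the linear functionals their distances induce are linearly independent, so that they already resolve the whole grid and in particular $V_1$.

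For the distance-preservation step I would introduce the potential $f(U)=UF-UE=\sum_{i=1}^d f_i\bigl(x_U^{(i)}\bigr)$ with $f_i(t)=|t-x_F^{(i)}|-|t-x_E^{(i)}|$, so that $V_1=\{U: f(U)\ge 0\}$ by definition. Each $f_i$ is monotone in its single argument (non-increasing if $x_E^{(i)}\le x_F^{(i)}$ and non-decreasing otherwise), as one sees from its piecewise-linear form. Given $A,B\in V_1$, I build a monotone (hence shortest) lattice path from $A$ to $B$ as follows: classify each coordinate $i$ as ``up'' if moving $x^{(i)}$ from $x_A^{(i)}$ to $x_B^{(i)}$ does not decrease $f_i$, and ``down'' otherwise; then traverse all ``up'' coordinates first, one unit at a time, and all ``down'' coordinates afterwards. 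During the first phase $f$ is non-decreasing, so $f\ge f(A)\ge 0$; during the second phase $f$ is non-increasing and ends at $f(B)\ge 0$, so $f\ge f(B)\ge 0$. Thus every vertex of the path lies in $V_1$, the path lies in $G_1$, and its length equals $AB$; combined with the trivial $d_{G_1}(A,B)\ge AB$ this gives $d_{G_1}(A,B)=AB$.

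For the corner-selection step, recall that for a corner $c$ the distance is affine, $Uc=C_c+\sum_i s_i^c\,x_U^{(i)}$, where the sign vector $s^c\in\{-1,+1\}^d$ has $s_i^c=+1$ if $c^{(i)}=1$ and $s_i^c=-1$ if $c^{(i)}=n_i$. Hence a set of corners resolves the entire grid as soon as their sign vectors are linearly independent. Writing $\delta_i=x_F^{(i)}-x_E^{(i)}$, a short computation shows $f(c)=\langle s^c,\delta\rangle$, so $c\in V_1$ exactly when $\langle s^c,\delta\rangle\ge 0$. I would take $s^*$ with $s_i^*=\mathrm{sgn}(\delta_i)$ (and $s_i^*=+1$ when $\delta_i=0$), so that $\langle s^*,\delta\rangle=\|\delta\|_1\ge 0$; then, letting $m$ be an index maximizing $|\delta_i|$, I would take the $d-1$ sign vectors obtained from $s^*$ by flipping a single coordinate $i\neq m$. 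Each such flip gives value $\|\delta\|_1-2|\delta_i|\ge 0$, since $\|\delta\|_1\ge|\delta_m|+|\delta_i|\ge 2|\delta_i|$, so all $d$ corners lie in $V_1$. These $d$ vectors are linearly independent: the differences $s^*-(\text{flip }i)=2s_i^*e_i$ recover $e_i$ for every $i\neq m$, and then $s^*$ recovers $e_m$. Since the corresponding corners resolve $G$ in the $G$-metric, and that metric agrees with $d_{G_1}$ on $V_1$, they resolve $G_1$, giving $\beta(G_1)\le d$.

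The main obstacle is the geodesic-convexity argument of the second paragraph: everything hinges on showing that a shortest path between two points of $V_1$ can be kept inside $V_1$, which is what licenses replacing $d_{G_1}$ by the $\ell_1$ metric. Once that is established, the corner selection is a short sign-and-independence argument; the only remaining care is for degenerate cases (coordinates with $\delta_i=0$, or ties in $|\delta_i|$), which are handled by the tie-breaking choices above and do not affect the argument.
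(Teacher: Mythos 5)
Your proposal is correct, and it follows the same skeleton as the paper's proof — exhibit $d$ corners lying in $V_1$ that resolve the whole grid, then show that distances from $V_1$ to those corners are unchanged in $G_1$ — but both key steps are executed by genuinely different arguments. For distance preservation, the paper only proves $d_G(X,O_j)=d_{G_1}(X,O_j)$ for the chosen corners $O_j$, by exhibiting an explicit two-segment path (first lower all coordinates except $j$, then raise coordinate $j$) and checking, case by case under a symmetry-breaking assumption, that it stays in $V_1$; you instead prove the stronger statement that $d_{G_1}(A,B)=AB$ for \emph{all} $A,B\in V_1$, via the potential $f(U)=UF-UE$ written as a sum of coordinate-wise monotone functions and the two-phase ($f$-increasing moves first, $f$-decreasing moves last) path construction. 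This yields geodesic convexity of $V_1$ with no case analysis and no loss-of-generality normalization; your ordering trick is exactly the right mechanism and the verification ($f\ge f(A)\ge 0$ in phase one, $f\ge f(B)\ge 0$ in phase two) is sound. For the resolving step, the paper cites Khuller et al.\ for the fact that the corner set $O_1=(1,\dots,1)$, $O_j=(1,\dots,n_j,\dots,1)$ resolves $G$, and then verifies $O_j\in V_1$ using Assumption \ref{assumption:zero}; you prove resolution from first principles via linear independence of the sign vectors of affine corner-distance functionals, and you adapt the corner choice to $\delta=x_F-x_E$ directly (base corner $\mathrm{sgn}(\delta)$, single flips avoiding the coordinate maximizing $|\delta_i|$), which makes the paper's symmetry-breaking reduction unnecessary — indeed, after applying that reduction your corners coincide exactly with the paper's set $O$. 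In short, your route is self-contained, avoids the WLOG bookkeeping, and delivers a stronger intermediate result (isometric embedding of $G_1$ in $G$), at the modest cost of the linear-algebra argument replacing a citation; the only degenerate situation worth a remark is a side length $n_i=1$, where two sign vectors name the same corner, but then the resolving set merely shrinks and the bound $\beta(G_1)\le d$ still holds.
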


We defer the proof to the end of the section, and we state and prove our main theorem for $d$-dimensional grid graphs.

\begin{theorem}
\label{thm:d_dimensional}
Let $G = (V, E_G)$ be a grid graph of dimension $d$. For an edge $e$ between any two vertices $E$ and $F$ in $V$, let $G' = (V, E_G \cup \{e\})$. Then, $\beta(G')\le 2d+2$. Moreover, the lower bound \eqref{ddim_md_lb} in Lemma \ref{lem:general_lower} holds for $G'$ as well.
\end{theorem}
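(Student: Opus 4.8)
The plan is to handle the two assertions separately, with each following quickly from the machinery already assembled. For the upper bound $\beta(G') \le 2d+2$, I would invoke Lemma~\ref{upper_bound}, which gives $\beta(G') \le \beta(G_1) + \beta(G_2) + 2$, where $G_1$ and $G_2$ are the subgraphs of $G$ induced on $V_1 = \{U \mid UE \le UF\}$ and $V_2 = \{U \mid UE \ge UF\}$. The key observation is that $V_2$ is exactly the set that $V_1$ would be if the labels of $E$ and $F$ were interchanged; since the grid together with the unordered pair $\{E,F\}$ is symmetric under this relabelling, Lemma~\ref{lemma:g1_d} applies verbatim to both subgraphs, yielding $\beta(G_1) \le d$ and $\beta(G_2) \le d$. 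Substituting these into the bound of Lemma~\ref{upper_bound} gives $\beta(G') \le d + d + 2 = 2d+2$, as desired.

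For the lower bound, I would re-run the counting argument used in the proof of Lemma~\ref{lem:general_lower}, which only relies on control of the range of possible distance values. The crucial point is Remark~\ref{distance}: adding the edge $e$ can never increase any pairwise distance, so every distance $d_{G'}(A,B)$ lies in the same set $\{0, 1, \dots, N_\Sigma - d\}$ as in $G$. Hence there are at most $N_\Sigma - d + 1$ distinct distance values in $G'$, and the number of possible distance vectors with respect to any resolving set of size $\beta(G')$ is at most $(N_\Sigma - d + 1)^{\beta(G')}$. Since these vectors must be distinct across all $N_\Pi$ vertices, we obtain $(N_\Sigma - d + 1)^{\beta(G')} \ge N_\Pi$, and taking logarithms recovers exactly the bound \eqref{ddim_md_lb}.

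I do not anticipate a genuine obstacle here, since both halves are essentially corollaries of the preceding results. The one point that requires care is the symmetry claim in the upper bound: I must confirm that the hypotheses of Lemma~\ref{lemma:g1_d} are insensitive to which endpoint of $e$ is called $E$ and which is called $F$, so that the lemma may legitimately be applied to $G_2$ by relabelling. For the lower bound, the only subtlety is noting that the range of distances can only shrink, never grow, so the diameter bound $N_\Sigma - d$ carries over unchanged to $G'$; this is immediate from Remark~\ref{distance}.
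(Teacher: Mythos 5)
Your proposal is correct and follows essentially the same route as the paper: both halves apply Lemma~\ref{upper_bound} combined with Lemma~\ref{lemma:g1_d} (using the $E \leftrightarrow F$ relabelling symmetry for $G_2$, which the paper dismisses with ``by symmetry''), and both obtain the lower bound by observing that adding an edge cannot increase distances, so the counting argument of Lemma~\ref{lem:general_lower} carries over verbatim. Your explicit justification of the symmetry step and of the diameter bound in $G'$ is a slightly more careful write-up of exactly the paper's argument.
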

\begin{proof}[Proof of Theorem \ref{thm:d_dimensional}]
Let $V_1 = \{U \in V \mid UE \leq UF\}$ and $V_2 = \{U \in V \mid UE \geq UF\}$. Let $G_1$ and $G_2$ be the subgraphs of $G$ induced on $V_1$ and $V_2$, respectively. Lemma \ref{lemma:g1_d} implies that $\beta(G_1) \leq d$, and $\beta(G_2) \leq d$ holds by symmetry. Finally, we apply Lemma \ref{upper_bound} to arrive to
\begin{align*}
    \beta(G')  & \leq \beta(G_1) + \beta(G_2) +2 \leq 2d + 2.
\end{align*}
For the lower bound, since adding an edge only decreases the distances in the graph, the same proof as in Lemma \ref{lem:general_lower} applies.
\end{proof}

It is an interesting question, whether the upper bound in Theorem \ref{thm:d_dimensional} can be improved to $2d$ by simply not including the two endpoints of the extra edge into the resolving set when applying Lemma \ref{upper_bound}. We saw in Claim \ref{claim:G**}, that the two endpoints are needed for general graphs, but we will see in the next section, that they are not needed for the $2$-dimensional grid. We believe that the upper bound can be improved to $2d$, but the proof is not straightforward. In the proof of the $2$-dimensional case, we rely heavily on the observation that the normal region has a specific shape no matter where the extra edge is added. We show in Figure \ref{fig:3dplots} that this is not true anymore even for $d=3$. Indeed, the shape of the normal regions (and thus of sets $V_1$ and $V_2$) can be quite different for different configurations of the extra edge, which suggests that the number of cases can explode.

\begin{figure}[!ht]
\centering
\begin{subfigure}{.5\textwidth}
  \centering
  \includegraphics[width=.7\linewidth]{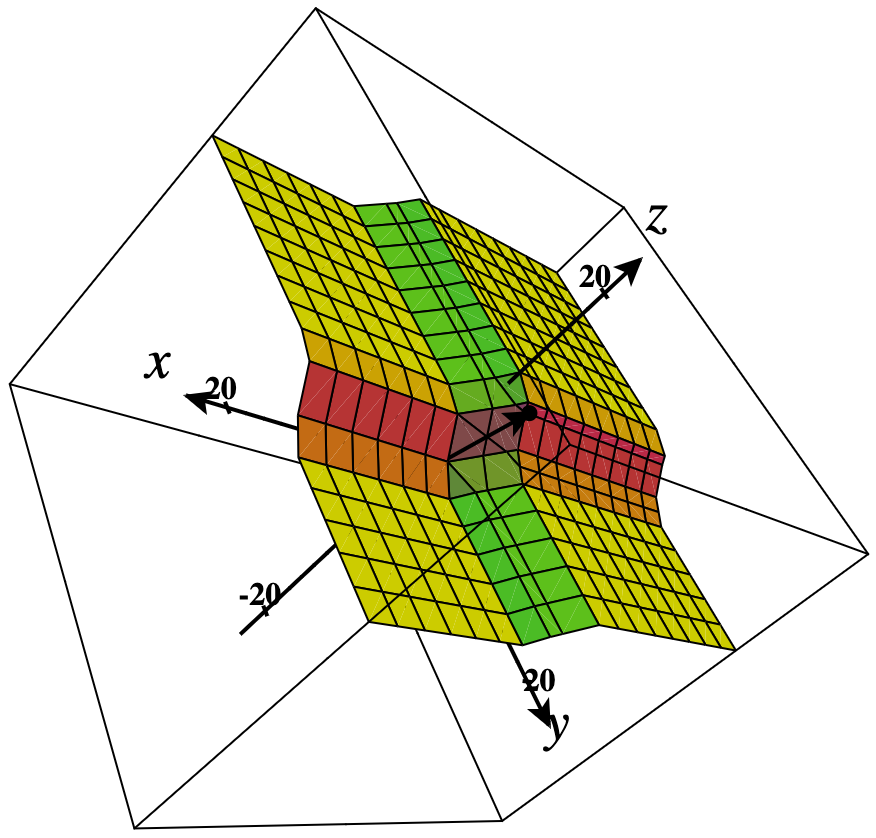}
  %\caption{$G$}
\end{subfigure}%
\begin{subfigure}{.5\textwidth}
  \centering
  \includegraphics[width=.7\linewidth]{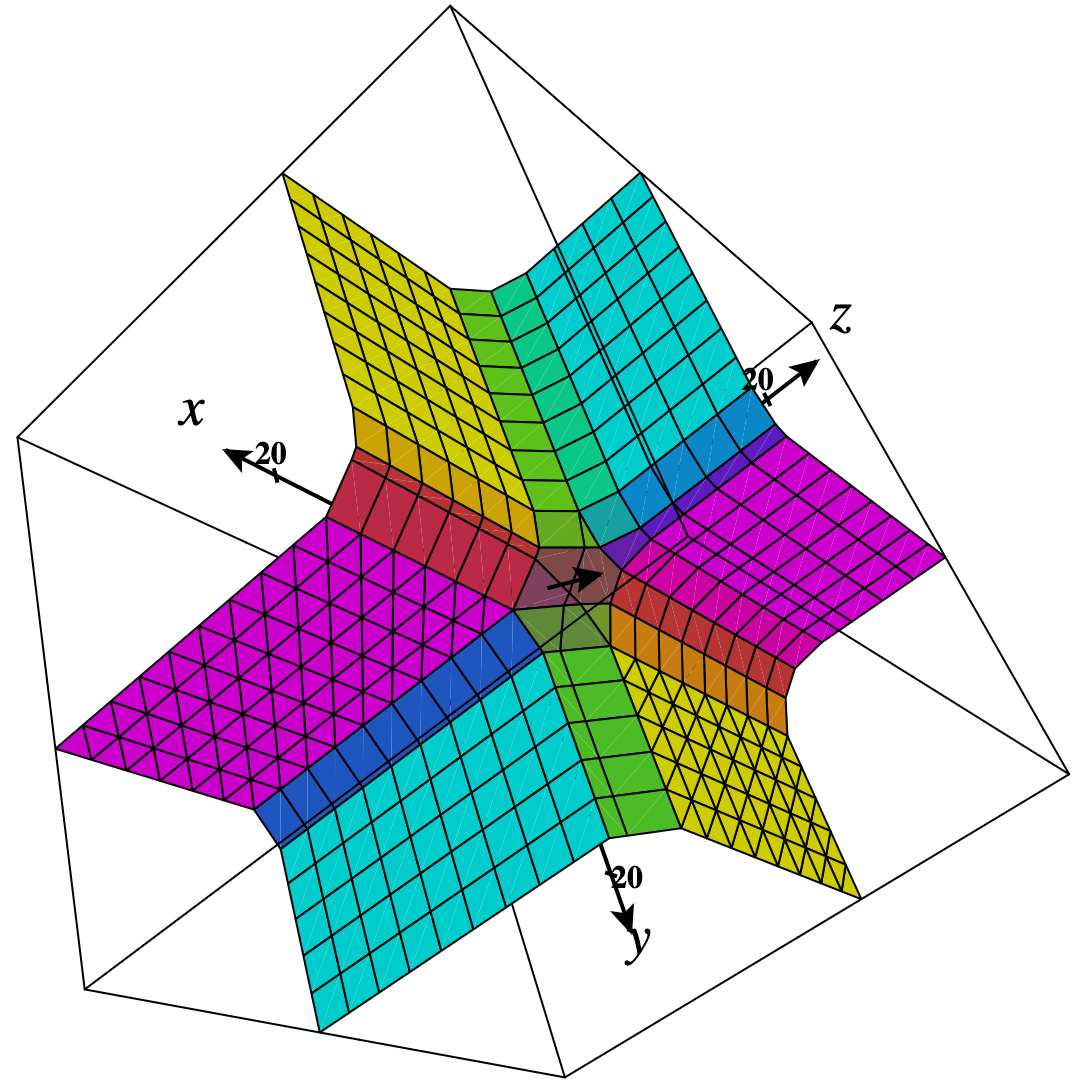}
  %\caption{$G'$}
\end{subfigure}
\caption{The 3D surfaces show the normal region in the $3$-dimensional grid for two different configurations of the extra edge. The extra edges are marked with a black vector in the middle of the cube.}
\label{fig:3dplots}
\end{figure}

We conclude the section by providing a proof for Lemma \ref{lemma:g1_d}.

\begin{proof}[Proof of Lemma \ref{lemma:g1_d}]
The proof will consist of three parts. In the first part of the proof, we define our coordinate system so that the extra edge is oriented in a specific way. This part essentially breaks the symmetries of the grid, which will reduce the number of cases we need to inspect later in the proof. In the second part, we show that a set of $d$ corners in $V_1$, which we denote by $O$, resolves the grid $G$. Finally, in the third part of the proof, we show that the distance between any vertex $X \in V_1$ and any corner in $O$ is the same in both $G$ and $G_1$. Hence, $O$ will be a resolving set of $G_1$ as well, which proves that the MD of $G_1$ is upper bounded by $d$ and completes the proof of the lemma.

\textbf{Part 1:} Without loss of generality, we can label the dimensions such that $|x_E^{(1)} - x_F^{(1)}| = \max_{i}(|x_E^{(i)} - x_F^{(i)}|)$, i.e., the distance between $E$ and $F$ along the first dimension is the maximum among distances along all the dimensions. Now, again without loss of generality, we also assume that $x_E^{(i)} \leq x_F^{(i)}$ for all $i$. We can assume that because if $x_E^{(j)} > x_F^{(j)}$ for any dimension $j$, we can reflect the grid along that dimension so that $x_E^{(j)}$ becomes less than $x_F^{(j)}$. Basically, this reflection will map coordinates $x_X^{(j)}$ to $n_j - x_X^{(j)}$, keeping all other coordinates unchanged. We summarize these assumptions, taken without loss of generality, below.
% \begin{equation}
% \label{eq:sym_breaking}
% \begin{aligned}
%         x_E^{(i)} & \leq x_F^{(i)} \hspace{10pt} \forall i \in \{1,...,d\} \\
%         x_F^{(1)} - x_E^{(1)} & \geq x_F^{(i)} - x_E^{(i)} \hspace{10pt} \forall i \in \{2,...,d\}
% \end{aligned}
% \end{equation}
\begin{assumption}[symmetry breaking]
\label{assumption:zero}
Without loss of generality, we assume that $E$ and $F$ satisfy
\begin{equation}
\label{eq:sym_breaking1}
        x_E^{(i)}  \leq x_F^{(i)} \hspace{10pt} \text{for all } i \in \{1,...,d\},
\end{equation}
and
\begin{equation}
    \label{eq:sym_breaking2}
        x_F^{(1)} - x_E^{(1)}  \geq x_F^{(i)} - x_E^{(i)} \hspace{10pt} \text{for all } i \in \{2,...,d\}.
\end{equation}
\end{assumption}

The $d$-dimensional grid has $2^dd!$ symmetries for choosing a coordinate system (which form the hyperoctahedral group). Note that even after Assumption \ref{assumption:zero}, we still have $(d-1)!$ ways of choosing the coordinates (each equation in \eqref{eq:sym_breaking1} removes a factor of two, and equations \eqref{eq:sym_breaking2} remove a factor of $d$). This is because we only require that $|x_F^{(i)}-x_E^{(i)}|$ takes (one of) its maximum value(s) for $i=1$, and we have no constraint on the order of the values for the other indices. Thus, Assumption \ref{assumption:zero} does not break all symmetries of the grid, only the ones necessary for the proof. This also means that although we exhibit only a single resolving set $O$, there are multiple sets of $d$ corners in $V_1$ that resolve $G$. 

\textbf{Part 2:} In this part proof, we show there there exists a set of $d$ corners in $V_1$ that resolves the grid $G$. Let us define
\[O=\left\{\begin{array}{l}
    O_1 = (1,1,1,\dots,1),\\
    O_2 = (1,n_2,1,\dots,1),\\
    O_3 = (1,1,n_3,\dots,1),\\
    \dots,\\
    O_d = (1,1,1,\dots,n_d)
\end{array}\right\},\]
where $O_1$ is the all-ones vector of dimension $d$, and for $j>1$ we get $O_j$ from $O_1$ by changing its $j^{th}$ entry to $n_j$. Khuller et al. show that the set of the $d$ corners of $O$ form a resolving set of $G$,
%\todo{Define the set in a better way -- how about $O = \{O_1 = (1, 1, ..., 1), O_j \mid x_{O_j}^j = n \text{  and  } x_{O_j}^t = 1 \hspace{5pt} \forall t \neq j \}$ ??}
and we only need to show that all $d$ corners of $O$ belong to $V_1$, that is $O_jE\le O_jF$ holds for all $j$.
% This holds because of the properties in equation \eqref{eq:sym_breaking}.
Because of equations \eqref{eq:sym_breaking1}, 
\begin{equation*}
    O_1E=\sum_{i=1}^d (x_E^{(i)}-1) \leq \sum_{i=1}^d (x_F^{(i)}-1) = O_1F.
\end{equation*}
Next, we consider the corners $O_j$ for $j > 1$. Because of Assumption \ref{assumption:zero}, we have

\begin{equation*}
    x_F^{(j)} - x_E^{(j)}  \stackrel{\eqref{eq:sym_breaking2}}{\leq}  x_F^{(1)} - x_E^{(1)} \stackrel{\eqref{eq:sym_breaking1}}{\leq} \sum_{i=1, i \neq j}^d(x_F^{(i)} - x_E^{(i)}).
\end{equation*}
Reorganizing the terms and then adding $n_j-d+1$ to both sides of the inequality yields
\begin{equation*}
    \begin{aligned}
        - x_E^{(j)} + \sum_{i=1, i \neq j}^d x_E^{(i)} & \leq - x_F^{(j)} + \sum_{i=1, i \neq j}^d x_F^{(i)}  \\
        (n_j - x_E^{(j)}) + \sum_{i=1, i \neq j}^d (x_E^{(i)} - 1) & \leq (n_j - x_F^{(j)}) + \sum_{i=1, i \neq j}^d (x_F^{(i)} - 1) \\
        O_jE &\leq O_jF.\\
    \end{aligned}
\end{equation*}

Thus, all the corners in $O$ lie inside $V_1$.

\textbf{Part 3:} In this part of the proof, we show that $d_G(X,O_j)=d_{G_1}(X,O_j)$ for all $X\in V_1$ and $O_j \in O$. We show this by exhibiting a shortest path between $X$ and $O_j$ in $G$ such that all vertices on that path belong to $V_1$. This will show that $d_G(X,O_j)\ge d_{G_1}(X,O_j)$. The inequality in the opposite direction is trivial because $G_1$ is a subgraph of $G$, which means that we must have $d_G(X,O_j)=d_{G_1}(X,O_j)$.

For $j>1$, the shortest path between $X$ and $O_j$ that we exhibit will have the following two parts:
\begin{enumerate}
    \item decrease all the co-ordinates (in any order), except $j$, to $1$ to reach $X_1 = (1,...,x_X^{(j)},...,1)$. 
    \item increase the $j^{th}$ coordinate from $x_X^{(j)}$ to $n_j$ in order to reach $O_j$.
\end{enumerate}
For $j=1$, we simply decrease all the coordinates (in any order) to $1$ to reach $O_1$. Clearly, these define valid shortest paths in a grid graph, and next, we prove that we stay inside $V_1$ both throughout the first part (from $X$ to $X_1$) and the second part (from $X_1$ to $O_j$) of the path.

%Note that in a $d$-dimensional grid, to reach from one vertex to other in the shortest path, we can travel along each dimension independently. Hence, in the first step, we reduce the first coordinate to $1$ keeping all other coordinates unchanged. Then we reduce the second coordinate, and so on. \todo{More description needed?}

First, we show that if $X = (x_X^{(1)}, x_X^{(2)},...,x_X^{(d)}) \in V_1$ with $x_X^{(1)}>1$, then $X_0 = (x_X^{(1)} - 1, x_X^{(2)},...,x_X^{(d)})~\in~V_1$ as well.  We distinguish two cases based on the ordering of $x_E^{(1)},x_F^{(1)}$ and $x_X^{(1)}$. On the one hand, if $x_X^{(1)} > x_F^{(1)} \geq x_E^{(1)}$ or $x_X^{(1)} \leq x_E^{(1)} \leq x_F^{(1)}$, then 
\begin{equation*}
    X_0F - X_0E =  |x_X^{(1)} - 1 - x_F^{(1)}| - |x_X^{(1)} - 1 - x_E^{(1)}| = |x_X^{(1)} - x_F^{(1)}| - |x_X^{(1)} - x_E^{(1)}|= XF - XE \geq 0,
\end{equation*}
since the terms inside the absolute values have the same sign. On the other hand, if $x_E^{(1)} < x_X^{(1)} \leq x_F^{(1)}$, then 
\begin{equation*}
    X_0F - X_0E =  (x_F^{(1)} - x_X^{(1)}+1) - (x_X^{(1)} - 1 - x_E^{(1)}) = (x_F^{(1)}-x_X^{(1)}) - (x_X^{(1)} - x_E^{(1)}) +2= XF - XE +2 \geq 2.
\end{equation*}

Since there are no other cases by Assumption \ref{assumption:zero}, the inequality $X_0F - X_0E \geq 0$ must always hold, which implies $X_0 \in V_1$. Therefore, we showed that decrementing the first coordinate does not lead outside of $V_1$, and the same argument works for any of the $d$ coordinates.

% This proves that $X_0$ also belongs to $V_1$
Next, we show for the second part of the shortest path, that each vertex in the path from $X_1$ to $O_j$ with $j > 1$ belongs to~$V_1$.
% Now, to prove that the distance from any vertex $X$ in $V_1$ and a corner $O_j$ is same in $G$ and $G_1$, we show that there exists a shortest path between $X$ and $O_j$ such that all the vertices on that path belong to $V_1$. To reach $O_j$ from $X$, first we decrease all the co-ordinates, except $j$, to $1$ and reach $X_1 = (1,...,x_X^{(j)},...,1)$. As shown above that decreasing any coordinate keeps the vertex in $V_1$, $X_1$ will be in $V_1$. Now we bring $X_1$ to $O_j$. For that, we need to increase the $j^{th}$ coordinate of $X_1$ from $x_X^{(j)}$ to $n$.
Let $X_y = (1,...,y,..,1)$ be a vertex with $x_{X_y}^{(i)}=1$ for $i\neq j$, and $x_X^{(j)} \leq y=x_{X_y}^{(j)} \leq n_j$. Clearly, $X_y$ describes all intermediate vertices on the path between $X_1$ to $O_j$. Then, since $j>1$,
\begin{align}
    X_yF - X_yE & = |y - x_F^{(j)}| + \sum_{i=1, i \neq j}^d (x_F^{(i)} - 1) - |y - x_E^{(j)}| - \sum_{i=1, i \neq j}^d (x_E^{(i)} - 1) \nonumber \\
    & = |y - x_F^{(j)}| - |y - x_E^{(j)}| + \sum_{i=1, i \neq j}^d (x_F^{(i)} - x_E^{(i)}) \nonumber \\
    & \stackrel{\eqref{eq:sym_breaking2}}{\geq} |y - x_F^{(j)}| - |y - x_E^{(j)}| + (x_F^{(j)} - x_E^{(j)}). \label{eq:lemma3finish}
\end{align}
Finally, by applying the triangle inequality to the right hand side of equation \eqref{eq:lemma3finish}, we arrive to
\begin{equation*}
    X_yF - X_yE \ge 0,
\end{equation*}
which implies that all the vertices $X_y$ in the path from $X_1$ from $O_j$ with $j>1$ belong to $V_1$. This concludes the proof of the lemma. 
% Now since $O_j$ lies in $V_1$,
% \begin{equation}
%     \begin{aligned}
%         O_jF - O_jE & \geq 0 \\
%         (n - x_F^{(j)}) + \sum_{i, i \neq j}(x_F^{(i)} - 1) - (n - x_E^{(j)}) - \sum_{i, i \neq j}(x_E^{(i)} - 1) & \geq 0 \\
%         \sum_{i, i \neq j}x_F^{(i)} - \sum_{i, i \neq j}x_F^{(i)} & \geq x_F^{(j)} - x_E^{(j)} \\
%     \end{aligned}
% \end{equation}
\end{proof}

\subsection{The 2-dimensional grid}
\label{sec:2dimensional}
For the sake of simplicity, we slightly adjust our notation to the $d=2$ case. Let $G=(V, E_G)$ be a two-dimensional rectangle grid graph with $m$ rows and $n$ columns. Let the tuple $(i, j)$ denote the vertex in $i^{th}$ column and $j^{th}$ row. The upper left, upper right, bottom right, bottom left corners are labeled as 
\begin{align*}
P=(1,1), \qquad Q=(n, 1), \qquad R=(n, m), \qquad S=(m, 1),
\end{align*}
respectively (see Figure \ref{fig:one}).
% Let $d_H(X, Y)$ denote the length of shortest path between vertices $X$ and $Y$ in graph $H$.
Let $e$ be the edge between vertices $E = (x_E, y_E)$ and  $F = (x_F, y_F)$  with $x_E, x_F \in \{1,..., n\}$, $y_E, y_F\in \{1,...,m\}$, with the assumption that $EF \geq 2$. Let $G'=(V, E_G \cup \{e\})$ be the 2-dimensional grid augmented with one edge.
\begin{assumption}[symmetry breaking for $d=2$]
\label{assumption:one}
We assume that
\begin{enumerate}
\item $x_F \leq x_E $
\item $y_E \leq y_F$
\item $x_E - x_F \leq y_F - y_E$.
 \end{enumerate}
\end{assumption}
Assumption \ref{assumption:one} is just a special case of Assumption \ref{assumption:zero} for $d=2$. Geometrically, it means that the edge is tilted right, $F$ is below and to the left of $E$, and the angle between the edge and the horizontal axis is between $45$ and $90$ degrees (see Figure \ref{fig:one}). As argued in the proof of Lemma \ref{lemma:g1_d}, if the edge is in any other orientation, we can flip or rotate the grid horizontally and/or vertically to bring the edge in this orientation, hence Assumption \ref{assumption:one} can be made without loss of generality.
% \begin{definition}
% Let 
% $$\Gain = \Gain(E, F) = (y_F - y_E) + (x_E - x_F) - 1$$ as in Definition \ref{def:pre_gain}, and let $$\Gain' = (y_F - y_E) - (x_E - x_F) - 1 = \Gain(F, (x_F, y_E)).$$
% \end{definition}
 
% \begin{remark}
% \label{gain_gain'}
% By our assumptions on $x_F$, $y_F$, $x_E$, $y_E$ it can be seen that $\Gain \geq 1$, $\Gain' \geq -1$ and $\Gain \geq \Gain'$. 
% \end{remark}

%%%%%%%%%%%%%

\begin{figure}[!ht]
    \centering
    \includegraphics[scale=0.5]{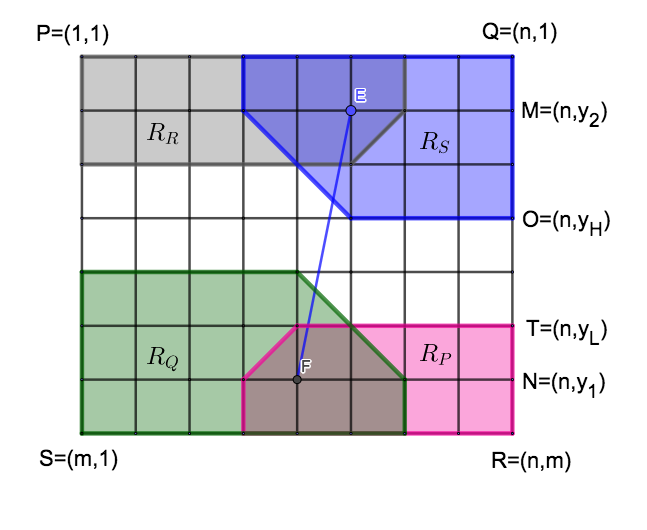}
    \caption{The sets $R_P$, $R_Q$, $R_R$, $R_S$, $R_W$ are colored grey, blue, pink, green and white, respectively. Vertices on the boundary of coloured regions are included in the respective coloured region.} 
    \label{fig:one}
\end{figure}

\subsubsection{Adversarial setting}
\begin{theorem}
\label{th:atmost_4}
Let $G=(V, E)$ be a rectangle grid graph with $m$ rows and $n$ columns. For an edge $e$ between any two nodes in $V$, let $G'=(V, E \cup \{e\})$.  Then, the set of all $4$ corners of the original grid is a resolving set for $G'$, and consequently $\beta(G')\le 4$. 
%Let the tuple $(i, j)$ denote the vertex in $i^{th}$ column and $j^{th}$ row.Let $G'=(V, E \cup ((x_1, y_1), (x_2, y_2)))$ with $x_1, x_2 \in \{1,..., n\}$, $y_1, y_2\in \{1,...,m\}$ and $|x_1 - x_2| + |y_1 - y_2| > 1$ be the grid extended by one edge. Then, the set of all $4$ corners is a resolving set for $G'$. This implies that metric dimension of $G'$ is at most $4$.
\end{theorem}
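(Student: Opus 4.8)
The plan is to show that the four corners $P,Q,R,S$ form a resolving set of $G'$ regardless of where the extra edge $e=EF$ is added. Under Assumption \ref{assumption:one}, the grid is partitioned into the special regions $R_P,R_Q,R_R,R_S$ and the normal region (the white region $R_W$ in Figure \ref{fig:one}). The first, and I expect most laborious, step is to establish the precise geometry of this partition: that the normal region consists of nodes $A$ with $|AE-AF|\le 1$ (Claim \ref{partition}), and that each special region collapses to a single corner's ``shadow''. Concretely, I would argue that a node uses the extra edge to reach $P$ only if it lies in the region closer to the $E$-side, and analogously for the other corners, so that each $R_X$ (for $X\in\{P,Q,R,S\}$) is a well-defined staircase-shaped region determined by the position of $E$ and $F$. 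Pinning down these shapes exactly, and showing they are consistent for every placement of $e$ allowed by Assumption \ref{assumption:one}, is the crux.

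Next I would reduce the resolving condition to a case analysis on which regions the two nodes $A\ne B$ lie in. By Claim \ref{same_region}, if $A$ and $B$ lie in the same special region, then $d_{G'}$ agrees with $d_G$ on all pairs involving the relevant corner, so the four corners resolve them exactly as they resolve the edge-free grid (where the four corners are already known to resolve, indeed the two opposite corners suffice). Thus the only dangerous pairs are those straddling two different regions, where at least one of $A,B$ takes a shortcut through $e$ while the other may not. For such pairs I would use Claim \ref{precise_distance} to write down $d_{G'}(A,X)$ explicitly for each corner $X$, splitting into the cases $X\in R_A$ versus $X\in N_A$, and then show that the coordinate information carried by the two horizontally-extreme corners together with the two vertically-extreme corners cannot be simultaneously matched by $A$ and $B$.

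The key quantitative tool will be the gain function: by Remark \ref{gain_max_remark}, $\Gain_{\max}(A)=\max(0,|AF-AE|-1)$, and the actual distance to a corner $X$ is reduced by exactly $\Gain(A,X)$, which is either $0$ (if $X\notin R_A$) or $|AF-AE|-1$ adjusted by the corner's position relative to $E$ and $F$. I would exploit the fact that $P$ and $R$ are diagonally opposite (as are $Q$ and $S$), so that for any node the sum $AP+AR$ and the sum $AQ+AS$ each encode the full coordinate pair $(x_A,y_A)$ in the unperturbed grid. The goal is then to show that even after subtracting the gains, two distinct nodes cannot have identical distance vectors to all four corners; intuitively, the shortcut through $e$ shifts distances by a quantity that depends monotonically on the node's position along the $E$--$F$ direction, so two nodes that are confused by one diagonal pair of corners will be separated by the other.

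The main obstacle I anticipate is the boundary and equality bookkeeping: nodes lying exactly on the interfaces between regions (where $|AE-AF|=1$, or where a shortest path can go either through $e$ or around it) must be assigned carefully and handled consistently, and the case where both $A$ and $B$ take shortcuts but toward \emph{different} corners requires comparing two simultaneously-perturbed distance vectors. I would organize this as an exhaustive but symmetric case analysis over the $\binom{5}{2}$ pairs of regions (using the symmetry-breaking of Assumption \ref{assumption:one} to cut the number of genuinely distinct cases), reducing each to a short inequality argument via the triangle inequality and the explicit formula \eqref{eq:precise_distance}.
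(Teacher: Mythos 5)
Your overall skeleton (partition into $R_P,R_Q,R_R,R_S,R_W$, then a case analysis on which regions $A,B$ occupy) matches the paper's strategy, but the quantitative core of your plan is built on two false premises, so as written the proof would not go through. First, you assert that the four corners resolve the edge-free grid because ``the two opposite corners suffice,'' and later that the sums $AP+AR$ and $AQ+AS$ ``each encode the full coordinate pair $(x_A,y_A)$.'' Both statements are wrong: for every node $A$ of the grid, $AP+AR=AQ+AS=n+m-2$ is a constant, so these sums encode nothing, and two opposite corners do not even resolve the unperturbed grid (all nodes on a common diagonal are confused). What is true, and what the paper's Claim \ref{adjacent} uses (citing \cite{melter1984metric}), is that two \emph{adjacent} corners resolve the grid. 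Since your plan for the ``dangerous'' cross-region pairs rests on the opposite-corner encoding and a monotone-shift heuristic derived from it, that step has no valid argument behind it. The paper instead handles the genuinely mixed case ($A\in R_Q$, $B\in R_S$, say) by a two-line triangle-inequality contradiction (Claim \ref{opposite}): if both $Q$ and $S$ failed to distinguish, adding the two distance equations $BQ=AF+1+EQ$ and $AS=BE+1+FS$ would violate $BQ+AS\le BE+EQ+AF+FS$.

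Second, your same-region case is logically inverted. From ``$A$ and $B$ lie in the same special region $R_P$'' it does \emph{not} follow that distances to $P$ are unchanged --- on the contrary, both are changed; Claim \ref{same_region} only gives $d_{G'}(A,B)=d_G(A,B)$, which is irrelevant to resolving. The step the paper actually needs is the disjointness $(R_P\cup R_Q)\cap(R_R\cup R_S)=\emptyset$ (equation \eqref{eq:intersect}, derived from Claims \ref{partition} and \ref{same_region}): a pair lying in the top regions has \emph{unchanged} distances to the bottom corners $R,S$, which are adjacent and hence resolve it, and symmetrically. Finally, the task you single out as the crux --- pinning down the exact staircase geometry of each $R_X$ --- is both laborious and unnecessary; the paper explicitly remarks that the exact shapes (e.g.\ $R_P\cup R_Q=R_E$) are never needed. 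The fix for your proposal is to replace the opposite-corner/sum machinery with the adjacent-corner fact plus the disjointness relation for the same-side cases, and with the triangle-inequality contradiction for the cross cases.
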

\begin{proof}
We start by making observations about which special regions the four corners $P,Q,R,S$ belong to. First, notice that
$$QF - QE = (n - x_F) + (y_F - 1) - (n - x_E) - (y_E - 1) = EF\geq 2,$$
% where the last inequality holds by Remark $\ref{gain_gain'}$ because $EF$.
Hence by Claim \ref{partition}, $Q\in R_F$. Similarly, $S\in R_E$.

Then, notice that
$$PF - PE = (x_F - 1) + (y_F - 1) - (x_E - 1) - (y_E - 1) = (y_F - y_E) - (x_E - x_F) \geq 0$$ where the last inequality holds by Assumption \ref{assumption:one}. Claim \ref{partition} implies therefore that $P$ belongs to either $R_F$ or $N$. Similarly, $R$ belongs to either $R_E$ or $N$. In any case, by Claim \ref{same_region}, it can be deduced that 
\begin{equation}
\label{eq:intersect}
(R_P \cup R_Q) \cap (R_S \cup R_R) =\emptyset
\end{equation}
%%%%%%%%%%ADD AS A NOTE%%%%%%%%%%%%%%%%%%1
In fact, it turns out that $R_P \cup R_Q = R_E$ and $R_R \cup R_S = R_F$, but we are not showing this because it is not needed in this proof. Instead, let $R_W= V \setminus \{R_P \cup R_Q \cup R_R \cup R_S \}$  (the white region in Figure \ref{fig:one}), and we note that the sets $R_P \cup R_Q$, $R_S \cup R_R$ and $R_W$ partition the set of nodes $V$. 

To prove the theorem, for any pair of nodes $A,B$, we are going to assign two of the corners $\{P,Q,R,S\}$ in the resolving set, and we are going to show that one of the two must distinguish $A$ and $B$. The assignment will depend on whether $A$ and $B$ belong to $R_S \cup R_R$, $R_P \cup R_Q$ or $R_W$. Moreover, we further divide the region $R_S \cup R_R$ to $R_R\setminus R_S, R_R\cap R_S$ and $R_S\setminus R_S$, and the region $R_P \cup R_Q$ to $R_Q\setminus R_P, R_Q\cap R_P$ and $R_P\setminus R_Q$, and we treat each subregion separately.

This would mean treating $7\cdot 7 = 49$ cases, but we make some simplifications. Let us suppose that the first point $A$ is in $R_W$ or in $R_Q$. The cases when $A$ falls in $R_P, R_R$ or $R_S$ are very similar. We make no assumptions on where $B$ falls, but combine similar cases. Finally, we arrive to $8$ cases, which are presented in Table \ref{fig:table1}. The table shows the various possibilities of regions where $A$ and $B$ can belong to (denoted by $R_1$ and $R_2$), the corresponding pair of corners which distinguish $A$ and $B$, and the claim which proves this. 

\begin{table}[H]
\centering

\begin{tabular}{|l|l|l|l|}
\hline
$R_1$                               & $R_2$                                                   & Distinguishing Corners & Claim used \\ \hline
$R_W$                               & $R_W \cup R_P \cup R_Q$ & $R, S$                 & \ref{adjacent}          \\ \hline
$R_W$                               & $R_R \cup R_S$                          & $P, Q$                & \ref{adjacent}          \\ \hline
$R_Q \setminus R_P$ & $R_W \cup R_Q \cup R_P$ & $R, S$                     & \ref{adjacent}          \\ \hline
$R_Q \setminus R_P$ & $R_S$                                                     & $Q, S$                   & \ref{opposite}          \\ \hline
$R_Q \setminus R_P$ & $R_R \setminus R_S$                                                    & $Q, R$                    & \ref{adjacent}          \\ \hline
$R_Q \cap R_P$      & $R_W \cup R_Q \cup R_P$                                                     & $R, S$                    & \ref{adjacent}          \\ \hline
$R_Q \cap R_P$      & $R_S$                                                    & $Q, S$                    & \ref{opposite}         \\ \hline
$R_Q \cap R_P$      & $R_R$                                                    & $P, R$                   & \ref{opposite}        \\ \hline
\end{tabular}
\caption{The assignment of corners to the pair $A,B$, when $A \in R_1$ and $B \in R_2$.}
\label{fig:table1}
\end{table}

We conclude the proof by stating and proving Claims \ref{opposite} and \ref{adjacent}.
\end{proof}

\begin{claim}
\label{opposite}
If $A \in R_Q$ and $B \in R_S$ then $d_{G'}(A, Q) \neq d_{G'}(B, Q)$ or $d_{G'}(A, S) \neq d_{G'}(B, S)$, i.e., $A$ and $B$ are distinguished by the opposite corners $Q$ and $S$. Similarly, if $A \in R_P$ and $B \in R_R$, then they are distinguished by $P$ and $R$. 
\end{claim}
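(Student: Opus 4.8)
The plan is to prove the $Q,S$ statement in full and then note that the $P,R$ statement follows by the identical argument. First I would pin down the relevant region memberships. As computed in the proof of Theorem \ref{th:atmost_4}, $QF-QE=EF\ge 2$ and $SE-SF=EF\ge 2$, so by Claim \ref{partition} we have $Q\in R_F$ and $S\in R_E$. Since $Q\in R_F$, Remark \ref{rem:figure} gives $R_Q\subseteq R_E$, so $A\in R_Q$ forces $A\in R_E$; symmetrically $S\in R_E$ gives $R_S\subseteq R_F$, so $B\in R_S$ forces $B\in R_F$.

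Next I would write out the four relevant distances in $G'$. Because $A\in R_Q$ and $A\in R_E$, the second case of Claim \ref{precise_distance} gives $d_{G'}(A,Q)=AF+1+EQ$; because $A$ and $S$ both lie in $R_E$, Claim \ref{same_region} gives $d_{G'}(A,S)=AS$. Symmetrically, $d_{G'}(B,S)=BE+1+FS$ and $d_{G'}(B,Q)=BQ$. I would then assume for contradiction that neither $Q$ nor $S$ distinguishes $A$ and $B$, which yields
\begin{equation*}
AF+1+EQ=BQ \qquad\text{and}\qquad AS=BE+1+FS.
\end{equation*}

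The heart of the argument is to expand these two identities in coordinates, using $Q=(n,1)$, $S=(1,m)$ and the $L^1$ distance, and cancel the constant terms. I expect the first identity to collapse to $AF=(x_E-x_B)+(y_B-y_E)-1$ and the second to $BE=(x_A-x_F)+(y_F-y_A)-1$. Then the elementary observation that a signed coordinate difference never exceeds the corresponding absolute value — so $(x_E-x_B)+(y_B-y_E)\le |x_B-x_E|+|y_B-y_E|=BE$ and $(x_A-x_F)+(y_F-y_A)\le|x_A-x_F|+|y_A-y_F|=AF$ — converts the two identities into $AF\le BE-1$ and $BE\le AF-1$ holding simultaneously, which is absurd. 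This contradiction establishes that $Q$ or $S$ must distinguish $A$ and $B$.

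The main obstacle is not any single hard estimate but getting the region bookkeeping exactly right, so that Claim \ref{precise_distance} and Claim \ref{same_region} are applied in the correct cases, and then spotting the decisive point: after converting to coordinates, each of the two ``non-distinguishing'' identities \emph{on its own} produces one of the two mutually contradictory inequalities between $AF$ and $BE$. Finally I would check that the $P,R$ case is genuinely symmetric: whenever $R_P$ (resp. $R_R$) is nonempty, $P$ (resp. $R$) cannot lie in the normal region, so the computation in the proof of Theorem \ref{th:atmost_4} forces $P\in R_F$ and $R\in R_E$, placing $P,R$ in exactly the configuration treated above, and the same coordinate expansion (now with $P=(1,1)$, $R=(n,m)$) yields the identical contradiction.
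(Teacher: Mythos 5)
Your proof is correct and follows essentially the same route as the paper: the same region bookkeeping (via Claims \ref{partition}, \ref{same_region}, \ref{precise_distance} and Remark \ref{rem:figure}), the same four distance formulas, and a contradiction from the triangle inequality --- your coordinate expansions of the two non-distinguishing identities are exactly the inequalities $BQ \le BE + EQ$ and $AS \le AF + FS$ made explicit in the $L^1$ metric, which the paper applies to the sum of the two identities rather than to each one separately. Your extra observation for the $P,R$ case, that the hypotheses $A \in R_P$ and $B \in R_R$ force $P \in R_F$ and $R \in R_E$ because a vertex with nonempty special region cannot lie in $N$, is a correct and slightly more careful rendering of the paper's ``a similar proof holds.''
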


\begin{proof}
Suppose for contradiction that $d_{G'}(A, Q) = d_{G'}(B, Q)$ and $d_{G'}(A, S) = d_{G'}(B, S)$. 
% Note that by Claim 2, we can be sure that $A \neq B$ and also $A \notin R_S$ and $B \notin R_Q$. 

Since $A \in R_Q$, $A \not\in R_S$, $B \in R_S$ and $B \not\in R_Q$, we have 
\begin{align*}
BQ =d_{G'}(B, Q) &= d_{G'}(A, Q) = AF + 1 + EQ\\
AS =d_{G'}(A, S) &=  d_{G'}(B, S) = BE + 1 + FS.
\end{align*}
Adding these equations gives 
\begin{equation}\label{contra1}
    BQ + AS = AF + EQ + BE + FS + 2.
\end{equation}

Applying the triangle inequality to points $B, E, Q$ and $A, F, S$ and adding both the inequalities, we get
\begin{equation*}
    BQ + AS \leq BE + EQ + AF + FS,
\end{equation*}
which contradicts \eqref{contra1}. A similar proof holds for  $A \in R_P$ and $B \in R_R$ with corners P and R. 
\end{proof}
\begin{claim}
\label{adjacent}
If two vertices $A,B$ are outside of the union of the special regions of two adjacent corners, then they are distinguished by those two corners. For example, if $A, B \in V\setminus \{R_P \cup R_Q\}$ then $P$ and $Q$ distinguish $A$ and $B$.
\end{claim}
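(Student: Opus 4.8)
The plan is to reduce the claim to the elementary fact that two adjacent corners already resolve the \emph{original} grid $G$, and then to transfer this to $G'$ using the hypothesis that $A$ and $B$ lie outside the two relevant special regions. I will prove the representative case of the adjacent corners $P$ and $Q$; the three remaining adjacent pairs $\{Q,R\}$, $\{R,S\}$, $\{S,P\}$ follow by the identical argument after relabelling, since the reasoning never invokes Assumption \ref{assumption:one} and is symmetric in the corners.

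First I would observe that the hypothesis $A,B \in V\setminus(R_P\cup R_Q)$ immediately freezes the four relevant distances. By the definition of the special region, $A\notin R_P$ means $d_{G'}(A,P)\not< AP$; since adding an edge cannot increase a distance, this forces $d_{G'}(A,P)=AP$, and likewise $A\notin R_Q$ gives $d_{G'}(A,Q)=AQ$. The same holds for $B$. Thus the distances from $A$ and $B$ to $P$ and to $Q$ coincide with their distances in the edge-free grid $G$, and the extra edge plays no role in what follows.

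The second step is the purely combinatorial observation that in $G$ the two corners $P=(1,1)$ and $Q=(n,1)$ together pin down the coordinates of any vertex. Writing $A=(x_A,y_A)$, we have $AP=(x_A-1)+(y_A-1)$ and $AQ=(n-x_A)+(y_A-1)$, so that
\begin{equation*}
 AP-AQ = 2x_A-n-1, \qquad AP+AQ = 2y_A+n-3,
\end{equation*}
which recover $x_A$ and $y_A$ uniquely from the pair $(AP,AQ)$. Consequently, if $A\ne B$ then $(AP,AQ)\ne(BP,BQ)$, i.e.\ $AP\ne BP$ or $AQ\ne BQ$. The same sum-and-difference inversion works for any adjacent pair, because two corners sharing a side of the rectangle contribute identically along the shared side and oppositely along the other, so their sum isolates one coordinate and their difference the other.

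Combining the two steps concludes the argument: since the distances to $P$ and $Q$ are unchanged in $G'$, the inequality $AP\ne BP$ (resp.\ $AQ\ne BQ$) becomes $d_{G'}(A,P)\ne d_{G'}(B,P)$ (resp.\ $d_{G'}(A,Q)\ne d_{G'}(B,Q)$), so one of $P,Q$ distinguishes $A$ and $B$ in $G'$. I do not expect a genuine obstacle here; the only point requiring a moment of care is the sign and orientation bookkeeping, i.e.\ verifying that the coordinate-recovery formulas hold for each of the four adjacent pairs (which is exactly where ``sharing a grid side'' is used), but this is routine.
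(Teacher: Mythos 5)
Your proof is correct and takes essentially the same route as the paper: the hypothesis $A,B \in V\setminus(R_P\cup R_Q)$ freezes the four relevant distances, reducing the claim to the fact that two adjacent corners resolve the unaugmented grid $G$. The only difference is that the paper cites this last fact from the literature (Melter and Tomescu), whereas you verify it inline via the sum-and-difference coordinate recovery, which is a harmless, self-contained substitute for the citation.
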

\begin{proof}
 The distances from $A$, $B$ to $P$, $Q$ in $G'$ are same as that in $G$, and we know \cite{melter1984metric} that the set of two adjacent corners is a resolving set of a rectangle grid.  \end{proof}

% \subsection{Analysis for infinite grids}
\subsubsection{Random setting}
Theorem \ref{th:atmost_4} tells us that the MD of a grid and one extra edge must take a value from the set $\{2, 3, 4\}$, and in fact, all three values can occur. In Conjecture \ref{conj:precise}, we present a set of conditions, which we believe completely characterize the MD of a grid and one extra edge, but proving this conjecture seems tedious. Instead, we are interested in a probabilistic approach: what is the distribution of the MD when a uniformly randomly selected edge is added?

First we define some quantities which will be useful for the remaining section. 
\begin{definition}[$\Gain'$]
\label{def:gain_prime}
Let 
$$\Gain = \Gain(E, F) = |y_F - y_E| + |x_E - x_F| - 1$$ as in Definition \ref{def:pre_gain}, and let $$\Gain' = \mathrm{max}(0, ||y_F - y_E|- |x_E - x_F|| - 1) = \Gain(F, (x_F, y_E)).$$
\end{definition}

The notion of $\Gain$ captures the maximum gain for any pair of nodes. The pair of vertices ($E,F$) obviously have maximum gain, however, there can be other pairs which have the same gain. For two vertices $X,Y$, let us denote by $\mathrm{Rec}(X,Y)$ the rectangle that has opposite corners $X$ and $Y$, and sides parallel to the sides of the grid. Then, the pairs $(A,B)$, with $A \in \mathrm{Rec}(E,Q)$, and $B \in \mathrm{Rec}(F,S)$ also have $\Gain(A,B)=\Gain$, since there is a shortest path between $A$ and $B$ in $G$ that passes through both $E$ and $F$. The notion of $\Gain'$ has a very similar interpretation as $\Gain$. We defined $\Gain'$ as the gain between vertices $F$ and $(x_F, y_E)$. Notice that $(x_F, y_E)$ is also a corner of $\mathrm{Rec}(E,F)$. Therefore, while $\Gain$ is about the gain between the opposite corners, $\Gain'$ is about the gain between the adjacent corners of the same rectangle  (by symmetry the gain between $E$ and $(x_E, y_F)$ is also $\Gain'$). Similarly to $\Gain$, there are many other pairs of vertex pairs $(A,B)$ with $\Gain(A,B)=\Gain'$. These are the pairs $(A,B)$ with $A \in \mathrm{Rec}(P,(x_F, y_E))$ and $B \in \mathrm{Rec}(F,R)$, and symmetrically the pairs with $A \in \mathrm{Rec}(R,(x_E, y_F))$ and $B \in \mathrm{Rec}(E,P)$. Roughly speaking, we could thus say that $\Gain$ is useful if we want to measure the distance between vertex pairs with one vertex close to $S$ and the other close to $Q$, while $\Gain'$ is useful if we want to measure the distance between vertex pairs with one vertex close to $P$ and the other close to $R$.

One of the key steps of the main proof in this section will be about treating the case when $\Gain'$ is very small. This is the case when the extra edge has (or is close to having) a 45 degree angle with the sides of the grid, and $\mathrm{Rec}(E,F)$ is (or is close to being) a square. In the extreme case, when $\Gain' = 0$, no vertex pairs close to $P$ and $R$ use the extra edge, and the structure of the special and normal regions are different from the case when $\Gain'\ge1$. When $\Gain' = 1$, there are still some subtle but inconvenient structural differences compared to the $\Gain'\ge 2$ case. Fortunately, since we are adopting a probabilistic framework, in the proof we will be able to ignore the cases with $\Gain' \le 1$, as these cases have a vanishing probability of occurring.

% \begin{remark}
% \label{gain_gain'}
% By our assumptions on $x_F$, $y_F$, $x_E$, $y_E$ it can be seen that $\Gain \geq 1$, $\Gain' \geq -1$ and $\Gain \geq \Gain'$. 
% \end{remark}

\begin{definition}
\label{def:Pn}
Let $\P_n$ be the probability distribution over potential extra edges $e_n=((x_E, y_E), (x_F, y_F))$ that we can add to $G_n$, where $(x_E, y_E)$ and $(x_F, y_F)$ are two uniformly random vertices of $G_n$.
\end{definition}

\begin{theorem}
\label{thm_grid_random}
Let $G_n$ be the $n\times n$ grid and let $G_n' = G_n \cup \{e_n\}$ with $e_n$ sampled from distribution $\P_n$. Then, the following results hold:

% \begin{equation}
%     \mathrm{MD}(G_n \cup \{e\}) \xrightarrow{p} \mathrm{Ber}\left(\frac{8}{27} \right)+3.
% \end{equation}
\begin{equation}
    \label{eq:main_first}
    \lim_{n\to\infty}\P_n(\beta(G_n') \in \{3, 4\}) = 1
\end{equation}
\begin{equation}
    \label{eq:main_second}
    \lim_{n\to\infty}\P_n\left(\beta(G_n') = 3 \;\middle|\; \Gain'\text{ is odd or } \mathrm{min}(|x_E - x_F|, |y_E - y_F|) < \frac{\Gain'}{2} + 2 \right) = 1
\end{equation}
\begin{equation}
    \label{eq:main_third}
    \lim_{n\to\infty}\P_n\left(\Gain'\text{ is odd or } \mathrm{min}(|x_E - x_F|, |y_E - y_F|) < \frac{\Gain'}{2} + 2 \right) = \frac{19}{27}.
\end{equation}

%If we add an edge randomly to an $n \times n$ grid then probability that MD is exactly 4 is $\frac{8}{27}$ a.a.s and probability that metric dimension is exactly 3 is $\frac{19}{27}$ a.a.s. $$\lim_{n\to\infty} P(\beta(G') = 4) = \frac{8}{27}$$ 
%$$\lim_{n\to\infty} P(\beta(G') = 3) = \frac{19}{27}$$
\end{theorem}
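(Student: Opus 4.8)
The three displayed equations split the theorem into a lower bound, an upper bound, and a probability computation, and my plan is to treat them in a different order than stated. Equation \eqref{eq:main_third} is a self-contained estimate that I would establish first. Equation \eqref{eq:main_first} is the lower bound $\beta(G_n')\ge 3$ holding with high probability (i.e.\ ruling out $\beta(G_n')=2$), and equation \eqref{eq:main_second} is the matching upper bound $\beta(G_n')\le 3$ on the conditioning event, which together with \eqref{eq:main_first} pins the value at exactly $3$. Writing $C$ for the event that $\Gain'$ is odd or $\mathrm{min}(|x_E-x_F|,|y_E-y_F|)<\Gain'/2+2$, note that once \eqref{eq:main_first} is known the lower-bound half of \eqref{eq:main_second} is automatic, since $\P_n(\beta(G_n')=2\mid C)\le \P_n(\beta(G_n')=2)/\P_n(C)\to 0$ as $\P_n(C)\to 19/27>0$; so the genuine content of \eqref{eq:main_second} is to exhibit three resolving landmarks, i.e.\ to show $\P_n(\beta(G_n')=4\mid C)\to 0$.

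For \eqref{eq:main_third}, set $a=|x_E-x_F|$ and $b=|y_E-y_F|$. Since the $x$- and $y$-coordinates of $E$ and $F$ are independent and uniform on $\{1,\dots,n\}$, the normalized pair $(a/n,b/n)$ converges in distribution to $(\alpha,\beta)$, where $\alpha,\beta$ are independent with common density $2(1-t)$ on $[0,1]$ (the law of $|U_1-U_2|$ for independent uniforms). On the event $|b-a|\ge 1$, which has probability tending to $1$, we have $\Gain'=|b-a|-1$, so $\Gain'$ is odd iff $a\equiv b \pmod 2$; the parities of $a$ and $b$ are asymptotically balanced and independent of the magnitudes, giving $\P_n(\Gain'\text{ odd})\to \tfrac12$. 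The size condition $\mathrm{min}(a,b)<\Gain'/2+2$ rescales (the additive constants vanishing) to $\mathrm{min}(\alpha,\beta)<|\beta-\alpha|/2$, equivalently $\mathrm{min}(\alpha,\beta)<\tfrac13\mathrm{max}(\alpha,\beta)$. Because the parity event is asymptotically independent of the magnitudes, inclusion--exclusion gives
\begin{equation*}
\P_n(C)\to \tfrac12 + p - \tfrac12 p,\qquad p:=\mathbb P\!\left(\mathrm{min}(\alpha,\beta)<\tfrac13\mathrm{max}(\alpha,\beta)\right),
\end{equation*}
and a direct integration using the density $2(1-t)$ yields $p=2\int_0^1\!\!\int_0^{\beta/3}4(1-\alpha)(1-\beta)\,d\alpha\,d\beta=11/27$, hence $\P_n(C)\to \tfrac12+\tfrac{11}{54}=\tfrac{19}{27}$.

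For the lower bound \eqref{eq:main_first} I would show $\beta(G_n')=2$ has vanishing probability. With high probability the endpoints $E,F$ lie in the interior far from the boundary and $\Gain=\Theta(n)$ is large; I would argue that for such generic edges no pair of landmarks $\{L_1,L_2\}$ resolves $G_n'$, by exhibiting two distinct vertices with identical distance pairs. The mechanism is the shortcut created by $e$: inside a special region the map $v\mapsto\bigl(d_{G'}(v,L_1),d_{G'}(v,L_2)\bigr)$ folds, and for generic placement one can always locate a colliding pair whatever $L_1,L_2$ are (equivalently, bound the number of edge placements admitting a size-two resolving set by $o(n^4)$). Combined with $\beta(G_n')\le 4$ from Theorem \ref{th:atmost_4}, this yields $\beta(G_n')\in\{3,4\}$ with high probability.

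For the upper bound in \eqref{eq:main_second} I would, on the event $C$ and after discarding the vanishing-probability cases $\Gain'\le 1$ and endpoints near the boundary, exhibit an explicit resolving set of three landmarks---three of the four corners $P,Q,R,S$, retaining in particular the opposite pair $Q,S$ that, by Claims \ref{opposite} and \ref{same_region}, separates the special regions $R_E$ and $R_F$---and re-run the region-by-region analysis of Theorem \ref{th:atmost_4}, now using $C$ to dispose of exactly the pairs that previously forced the fourth corner. The two ways $C$ can hold match the two ways the obstruction disappears: when $\Gain'$ is odd the rectangle $\mathrm{Rec}(E,F)$ has no exact center, so the reflection that would otherwise create a confusable pair along the $P$--$R$ diagonal sends no vertex to a vertex; and when $\mathrm{min}(a,b)<\Gain'/2+2$ the rectangle is too thin for that symmetric region to fit inside the grid. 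The main obstacle is precisely this geometric dissection: both \eqref{eq:main_first} and the case analysis behind \eqref{eq:main_second} require a meticulous accounting of the special regions of Figure \ref{fig:one}, and it is the analogue of these arguments in the complementary regime---where $\beta(G_n')$ should jump to $4$ and whose lower bound is too long to write out in full---that is deferred to Conjecture \ref{conj:precise}.
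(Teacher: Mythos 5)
Your computation of \eqref{eq:main_third} is correct, and it takes a genuinely different route from the paper: you pass to a continuum limit (independent triangular densities, a parity argument, and inclusion--exclusion, with $p=11/27$ giving $\tfrac12+\tfrac{11}{54}=\tfrac{19}{27}$), whereas the paper first reduces to the symmetry-broken uniform measure $\mathbf{Q}_n$ via Claims \ref{claim:TV}--\ref{claim:Q_nA_n} and then counts lattice points in $\bar{C}$ directly. Both are valid. The serious problem is your plan for \eqref{eq:main_second}: the claim that on the event $C$ one can take \emph{three of the four corners} as a resolving set is false. Concretely, take the $15\times 15$ grid with corners $P=(1,1)$, $Q=(15,1)$, $R=(15,15)$, $S=(1,15)$, and the extra edge $E=(8,4)$, $F=(5,11)$, so that Assumptions \ref{assumption:one} and \ref{assumption:two} hold and $\Gain'=3$ is odd (hence $C$ holds, and indeed $\beta(G')=3$ by Lemmas \ref{lem:ge3} and \ref{lem:gain_odd}). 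Then every corner-triple fails: $A=(5,10)$ and $B=(6,11)$ have identical $G'$-distances $(12,12,9)$ to $P,Q,S$; $A=(4,11)$ and $B=(5,10)$ have identical distances $(12,12,15)$ to $P,Q,R$; $A=(4,4)$ and $B=(5,5)$ have identical distances $(14,19,13)$ to $Q,R,S$; and $A=(4,4)$ and $B=(5,3)$ have identical distances $(6,19,13)$ to $P,R,S$. The mechanism is precisely what your sketch overlooks: once two of the kept landmarks have their shortest paths rerouted through $e$, their distance functions degenerate to $AF$ (or $AE$) plus a constant, and the remaining landmark's $\ell_1$-level sets are parallel, on a whole quadrant, to a side of the $\ell_1$-ball around $F$ (or $E$), so collisions persist no matter which corner is dropped. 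This is why the paper's Lemmas \ref{lem:gain_odd} and \ref{lem:gain_even} place landmarks not at corners but aligned with the normal region --- $\{(1,\floor{\beta}),(n,\floor{\beta}),Q\}$ in the odd case and $\{(1,\beta-1),(n,\beta-1),(1,\alpha-1)\}$ in the even case --- and their proofs lean on the exact characterizations of $N$ and of the boundary special regions (Claims \ref{claim:normal_region_grid} and \ref{special_region}).

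Your plan for \eqref{eq:main_first} is also not yet an argument: ``the distance map folds, so one can always locate a colliding pair whatever $L_1,L_2$ are'' restates the claim $\beta(G')\ge 3$ rather than proving it; bounding the number of exceptional edge placements by $o(n^4)$ is exactly the content that needs a proof. The paper's Lemma \ref{lem:ge3} requires real machinery here: Claim \ref{boundary} forces any two-element resolving set onto opposite boundaries, Remark \ref{four_points} forces $\Gain_{\max}\le 1$ for both landmarks (otherwise the four neighbours of $E$ or $F$ collide), and then Claims \ref{claim:normal_region_grid} and \ref{special_region} pin each landmark down to at most three explicit boundary positions (depending on the parity of $\Gain'$), after which the finitely many remaining pairs are eliminated by exhibiting a sub-grid whose distances to both landmarks are unaffected by $e$. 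In summary: \eqref{eq:main_third} stands as an alternative proof, but \eqref{eq:main_first} has a genuine gap, and the proposed route for \eqref{eq:main_second} fails and cannot be repaired while insisting on corner landmarks.
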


According to Theorem \ref{thm_grid_random}, the asymptotic probability that the MD of the square grid with an extra edge is three is at least $19/27$. We believe that it is also true that the MD is at least four when $\Gain'$ is even and $x_E - x_F \geq \Gain'/2 + 2$. If we could prove this, we could state that the asymptotic probability of $\beta(G')$ being three is \textit{exactly} $19/27$, and $\beta(G') \rightarrow \mathrm{Ber}(8/27)+3$ in probability, where $\mathrm{Ber}(q)$ is a Bernoulli random variable with parameter $q$. We believe that a brute-force approach similar to the proof of Theorem \ref{th:atmost_4} can work, but it requires a tedious case-by-case analysis that is out of scope of this paper. 

The probabilistic formulation of Theorem \ref{thm_grid_random} allows us to ignore the edge-cases that would be too tedious to check individually, but it introduces new challenges as well. In rest of this section, we explore these new challenges and we reduce equations \eqref{eq:main_first}-\eqref{eq:main_third} to technical Lemmas \ref{lem:ge3}, \ref{lem:gain_odd} and \ref{lem:gain_even}, which are of deterministic nature. We give the proof of Theorem \ref{thm_grid_random} at the end of this section, but we defer the proof of the technical lemmas to Section \ref{sec:key_lemmas}.

The specific edge-cases that we ignore using the probabilistic formulation are given in Assumption~\ref{assumption:two}.

%We do not consider extra edges with at least one endpoint on the boundary of the grid or having $\Gain' \leq 1$ or the ones which are horizontal or vertical. We can make such an assumption because the number of such edges is $\Theta(n^3)$ for $G_n$, and the total number of possible edges are $\Theta(n^4)$, so such edges do not affect the asymptotic probabilities. Hence, for the rest of the section Assumption \ref{assumption:two} below extends Assumption \ref{assumption:one}.

\begin{assumption}[edge-case removal]
\label{assumption:two}
We assume that
\begin{enumerate}
\item $x_F \ne x_E$
\item $\Gain' \geq 2$
\item none of $E$ and $F$ lie on the boundary of the grid.
\end{enumerate}
\end{assumption}

In addition to Assumption \ref{assumption:two}, we are also going to make use of Assumption \ref{assumption:one} as we did in the proof of Theorem \ref{th:atmost_4}. Assumptions \ref{assumption:one}  and \ref{assumption:two} applied together have some additional implications.

\begin{remark}
\label{assumption:onetwo}
Assumption \ref{assumption:one} and \ref{assumption:two} together imply that 
\begin{enumerate}
\item $x_F < x_E$
\item $y_E < y_F$.
\item $x_E - x_F < y_F - y_E$
\end{enumerate}
\end{remark}

Using Assumption \ref{assumption:one} in the probabilistic formulation is not as straightforward anymore, as symmetry breaking can also break the uniformity of the sampling of the extra edge. Indeed, sampling a random edge that satisfies Assumption \ref{assumption:one} is not the same as sampling an edge from $\P_n$ and rotating and reflecting it so that Assumption \ref{assumption:one} is satisfied. In Claims \ref{claim:TV} and \ref{claim:Q_nA_n}, we are going to show that after removing only $O(n^3)$ edges from $V \times V$, and thus slightly changing the distribution $\P_n$, the symmetry breaking will not violate the uniformity of the sampling anymore.

\begin{definition}[$\mathcal{P}$,$\mathcal{Q}$,$\tilde{\P}_n,\mathbf{Q}_n$]
\label{def:barP}
Let $\mathcal{P}$ be the set of extra edges $((x_E,y_E),(x_F,y_F))$ that satisfy Assumption \ref{assumption:two}, and let $\mathcal{Q}$ the set of extra edges that satisfy both Assumptions \ref{assumption:one} and \ref{assumption:two}. Let $\tilde{\P}_n$ and $\mathbf{Q}_n$ be the uniform probability distribution over $\mathcal{P}$ and $\mathcal{Q}$, respectively.
\end{definition}

%One way to sample from $\mathbf{Q}_n$ this is to sample $e_n$ from $\P_n$ repeatedly until $e_n$ satisfies Assumption~\ref{assumption:one}.
In Claim, \ref{claim:TV} we show that $\P_n$ is close to $\tilde{\P}_n$, and in Claim \ref{claim:Q_nA_n} we show that  $\tilde{\P}_n$ is close to $\mathbf{Q}_n$. These two claims allow us to use $\mathbf{Q}_n$ instead of $\P_n$ in the proof of Theorem \ref{thm_grid_random}.

\begin{claim}
\label{claim:TV}
For $\P_n$ and $\tilde{\P}_n$ given in Definitions \ref{def:Pn} and \ref{def:barP}, 
$$\lim\limits_{n\rightarrow\infty}\|\P_n - \tilde{\P}_n\|_{TV}=0.$$
\end{claim}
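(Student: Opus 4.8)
The goal is to show that the two distributions $\P_n$ and $\tilde{\P}_n$ are close in total variation as $n\to\infty$. The plan is to view $\tilde{\P}_n$ as the conditional distribution of $\P_n$ restricted to the event that the sampled edge satisfies Assumption~\ref{assumption:two}, and then to argue that this conditioning event has probability tending to one. For two distributions where one is the conditioning of the other onto a high-probability event, a standard fact gives that the total variation distance is controlled by the probability of the complementary (removed) event.

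The key steps I would carry out in order are the following. First, I would write $\P_n$ as the uniform distribution over all ordered pairs of distinct vertices (or all pairs forming a valid extra edge, i.e., with $EF\ge 2$), so that the sample space has size $\Theta(n^4)$, since each of the two endpoints ranges over $n^2$ vertices. Second, I would identify the set $\mathcal{P}^c$ of edges that \emph{fail} Assumption~\ref{assumption:two}, and bound its cardinality. The three conditions of Assumption~\ref{assumption:two} are violated on the following sets: (i) $x_F=x_E$ forces the two endpoints to share a column, which leaves only $n$ choices for the common $x$-coordinate and $n\cdot n$ choices for the two $y$-coordinates, i.e.\ $O(n^3)$ edges; (ii) the boundary condition, where $E$ or $F$ lies on the perimeter of the grid, involves $O(n)$ boundary vertices paired with $O(n^2)$ other vertices, again $O(n^3)$ edges; (iii) the condition $\Gain'\le 1$, which I would argue also cuts out only $O(n^3)$ edges, since $\Gain' = \max(0,\,||y_F-y_E|-|x_E-x_F||-1)$ being small pins the difference $||y_F-y_E|-|x_E-x_F||$ to one of at most three values, and for each fixed value of, say, $|x_E-x_F|$ the number of $y$-pairs with $|y_F-y_E|$ in a bounded window is $O(n)$, so summing over $O(n)$ column offsets and $O(n^2)$ placements gives $O(n^3)$. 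Third, I would combine these three bounds by a union bound to get $|\mathcal{P}^c| = O(n^3)$, hence $\P_n(\mathcal{P}^c)=O(n^3)/\Theta(n^4)=O(1/n)\to 0$.

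Finally, I would invoke the elementary identity relating a distribution to its conditioning on a high-probability event: since $\tilde{\P}_n(\cdot)=\P_n(\cdot\mid \mathcal{P})=\P_n(\cdot\cap\mathcal{P})/\P_n(\mathcal{P})$ and $\tilde{\P}_n$ is supported on $\mathcal{P}$, a direct computation of $\|\P_n-\tilde{\P}_n\|_{TV}=\tfrac12\sum_e|\P_n(e)-\tilde{\P}_n(e)|$ shows this quantity equals $\P_n(\mathcal{P}^c)$ (equivalently, it is bounded by $2\,\P_n(\mathcal{P}^c)$ up to the usual constant). Letting $n\to\infty$ and using $\P_n(\mathcal{P}^c)\to 0$ then yields $\|\P_n-\tilde{\P}_n\|_{TV}\to 0$, as claimed.

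The main obstacle I anticipate is the careful counting in step (iii), bounding the number of edges with $\Gain'\le 1$. Unlike the column-collision and boundary conditions, which are transparently $O(n^3)$, the $\Gain'$ condition couples the horizontal and vertical gaps $|x_E-x_F|$ and $|y_E-y_F|$ through an absolute-value-of-a-difference expression, so one must be a little careful to verify that fixing the discrepancy between the two gaps to lie in a bounded set genuinely leaves only $O(n)$ degrees of freedom per fixed offset rather than $O(n^2)$. Once that counting is pinned down, the rest is routine.
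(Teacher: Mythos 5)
Your proposal is correct and follows essentially the same route as the paper: both bound the set of edges violating Assumption~\ref{assumption:two} by $O(n^3)$ via a degrees-of-freedom count, then compute $\|\P_n-\tilde{\P}_n\|_{TV}$ between the two uniform distributions and conclude it is $O(1/n)$. Your framing via the identity $\|\P_n - \P_n(\cdot\mid\mathcal{P})\|_{TV} = \P_n(\mathcal{P}^c)$ is just a repackaging of the paper's term-by-term summation, and your more careful treatment of the $\Gain'\le 1$ count fills in a step the paper dismisses as ``not hard to see.''
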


\begin{proof}[Proof of Claim \ref{claim:TV}]
The support of $\P_n$ is $V \times V$, and $|V \times V| = n^4$ because each of the four coordinates $x_E, y_E, x_F$ and $y_F$ can take four values. Recall, that $\mathcal{P} \subset (V \times V$), and the set $(V \times V) \setminus \mathcal{P}$ consists of the edges that do not satisfy Assumption \ref{assumption:two}. Therefore, to upper bound the cardinality of $(V \times V) \setminus \mathcal{P}$, it is enough to upper bound the number of edges violating each of the conditions in Assumption \ref{assumption:two}. It is clear that the number of edges that violate the first condition is $n^3$; the coordinates $x_E, y_E, y_F$ can be chose arbitrarily $n^3$ different ways, and then setting $x_F=x_E$ gives exactly one unique edge that violates the first condition. For a more insightful but less precise explanation, notice that the original set $V \times V$ had four degrees of freedom, and we lost one to violating the condition, hence we are left with three degrees of freedom and $O(n^3)$ edges. It is not hard to see that we lose one degree of freedom to violate the second and third conditions as well, and therefore the number of edges violating these conditions are also $O(n^3)$. We conclude that the number of edges in $(V \times V) \setminus \mathcal{P}$ are also of order $O(n^3)$.

Then,
\begin{align*}
2\|\P_n - \tilde{\P}_n \|_{TV} &= \sum\limits_{e \in \mathcal{P}} | \P_n(e) - \tilde{\P}_n(e) | + \sum\limits_{e \in V \times V \setminus \mathcal{P}} \P_n(e) \\
&= |\mathcal{P}| \left|\frac{1}{|V\times V|}-\frac{1}{|\mathcal{P}|} \right| + \frac{|(V \times V) \setminus \mathcal{P}|}{|V\times V|}\\
&=   (n^4+O(n^3)) \left|\frac{1}{n^4}-\frac{1}{n^4+O(n^3)} \right|+  \frac{O(n^3)}{n^4}\\
&= O\left(\frac{1}{n}\right).
\end{align*}
\end{proof}

\begin{definition}[$H$]
Let us consider the following actions on the extra edges of the grid:
\begin{enumerate}
    \item by $h_1$ the reflection along the vertical line through the midpoints of sides $PQ$ and $SR$,
    \item by $h_2$ the reflection along the horizontal line through the midpoints of sides $PS$ and $QR$, 
    \item and by $h_3$ switching the two endpoints of the edge.
\end{enumerate}
Let $H$ be the group generated by $h_1, h_2$ and $h_3$ acting on the edges. 
\end{definition}

Notice that group $H$ acting on the edges is isomorphic to the $\mathbb{Z}_2^3$ group. Indeed, all three actions have order two and commute with each other. Thus, $H$ can be described as $\{ h_1^ih_2^jh_3^k \mid i,j,k \in \{0,1\}\}$. Also, notice that for $e \in \mathcal{Q}$, applying $h_1, h_2$ and $h_3$ flips the inequality labelled with the same index in Remark \ref{assumption:onetwo}, and keeps the other two inequalities unchanged. 

\begin{definition}
\label{def:h}
Let $h$ be a map, which for each edge $e \in \mathcal{Q}$ returns the set of edges that we get by applying the elements of $H$ to $e$.
\end{definition}
The sets $h(e)$ can be seen as orbits of the edges under the action of $H$.  
\begin{claim}
\label{claim:statements}
With $\mathcal{P}, \mathcal{Q}$ and $h$ given in Definitions \ref{def:barP} and \ref{def:h}, the following three statements must hold:
\begin{enumerate}
    \item $|h(e)|=8$ for every $e\in\mathcal{Q}$
    \item the orbits of the edges in $\mathcal{Q}$ are disjoint, i.e., $h(e_1) \cap h(e_2) = \emptyset$ for $e_1 \ne e_2 \in \mathcal{Q}$
    \item for every $e \in \mathcal{P}$, there is an $e_2 \in \mathcal{Q}$ with $e \in h(e_2)$.
\end{enumerate}
\end{claim}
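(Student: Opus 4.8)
The plan is to read the three statements together as the single assertion that $H\cong\mathbb{Z}_2^3$ acts freely on $\mathcal{P}$ and that $\mathcal{Q}$ is a fundamental domain for this action, i.e. that $\mathcal{Q}$ meets every orbit in exactly one point. Before anything else I would verify that $H$ maps $\mathcal{P}$ into itself, so that each orbit $h(e)$ stays inside $\mathcal{P}$ and the statements are even meaningful: every condition of Assumption \ref{assumption:two} is $H$-invariant, since $\Gain'$ depends only on $|x_E-x_F|$ and $|y_F-y_E|$, the reflections $h_1,h_2$ send interior vertices to interior vertices and the boundary to the boundary, and the swap $h_3$ merely relabels the two endpoints.

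To organise the bookkeeping I would attach to each $e\in\mathcal{P}$ its truth vector $\tau(e)=(\tau_1,\tau_2,\tau_3)\in\{0,1\}^3$, where $\tau_i=1$ exactly when the $i$-th inequality of Remark \ref{assumption:onetwo} holds for $e$. By construction $\mathcal{Q}=\{e\in\mathcal{P}:\tau(e)=(1,1,1)\}$, because on $\mathcal{P}$ the inequalities of Assumption \ref{assumption:one} sharpen to the strict ones of Remark \ref{assumption:onetwo}. The engine of the argument is the observation already recorded above, that applying $h_i$ flips $\tau_i$ and leaves the other two coordinates unchanged; I would first promote this from $e\in\mathcal{Q}$ to every $e\in\mathcal{P}$, which is precisely the statement that $\tau\colon\mathcal{P}\to\{0,1\}^3$ is equivariant for the coordinate-flip action of $\mathbb{Z}_2^3$.

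Granting equivariance, the three statements follow in turn. For the first, the eight elements $h_1^{a}h_2^{b}h_3^{c}$ with $a,b,c\in\{0,1\}$ carry a given $e\in\mathcal{Q}$ to eight edges whose truth vectors realise all eight patterns of $\{0,1\}^3$; since distinct truth vectors force distinct edges, $|h(e)|=8$. For the second, if $h(e_1)\cap h(e_2)\neq\emptyset$ with $e_1,e_2\in\mathcal{Q}$, then $e_2=g(e_1)$ for some $g\in H$, and as $\tau(e_1)=\tau(e_2)=(1,1,1)$ the element $g$ flips no coordinate of $\tau$, so $g=\mathrm{id}$ and $e_1=e_2$. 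For the third, given $e\in\mathcal{P}$ I would let $a,b,c$ be the indicators that the respective inequalities fail for $e$ and apply $g=h_1^{a}h_2^{b}h_3^{c}$; equivariance yields $\tau(g(e))=(1,1,1)$, so $g(e)\in\mathcal{Q}$, and since each generator is an involution that commutes with the others we have $e=g(g(e))\in h(g(e))$.

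The step I expect to be the real obstacle is the equivariance itself: verifying that each generator flips its own inequality and preserves the other two for \emph{every} edge of $\mathcal{P}$, not only for edges of $\mathcal{Q}$. This is delicate because the third inequality $x_E-x_F<y_F-y_E$ is phrased with signed differences rather than with the absolute displacements $|x_E-x_F|$ and $|y_F-y_E|$, so under the reflections, and especially under the swap $h_3$, it couples to the first two inequalities; one has to track the signs of $x_E-x_F$ and $y_F-y_E$ through each generator and lean on $x_F\neq x_E$ together with the strict gap furnished by $\Gain'\geq 2$ to keep every flip clean. The most care will be needed for the degenerate configurations — edges that are (nearly) axis-aligned and edges whose midpoint coincides with the centre of the grid — which must be singled out and treated by hand, since for them a nontrivial element of $H$ can either stabilise the edge or act trivially on $\tau$; confirming that Assumption \ref{assumption:two} removes or neutralises exactly these families is what would turn the clean group-theoretic sketch above into a fully rigorous proof.
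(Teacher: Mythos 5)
Your proposal hinges on the equivariance step that you yourself flag as the main obstacle, and that step is not merely delicate --- it is false, already for $e \in \mathcal{Q}$. Write $a = x_E - x_F$ and $b = y_F - y_E$, so that the three inequalities of Remark \ref{assumption:onetwo} read $a>0$, $b>0$, $a<b$. The generators act on these signed differences by $h_1\colon (a,b)\mapsto(-a,b)$, $h_2\colon (a,b)\mapsto(a,-b)$ and $h_3\colon (a,b)\mapsto(-a,-b)$, since switching the endpoints negates \emph{both} differences. Hence for $e\in\mathcal{Q}$ the swap $h_3$ flips all three inequalities (not only the third), and $h_2$ flips the second \emph{and} the third; your truth map $\tau$ is not equivariant for the coordinate-flip action, and every consequence you draw from equivariance collapses. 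Concretely: $h_3$ coincides with $h_1h_2$ on $(a,b)$, so an orbit realises only the four difference vectors $(\pm a,\pm b)$ and therefore at most four truth vectors, never eight; the element $h_1h_2h_3$ preserves $(a,b)$ and thus maps $\mathcal{Q}$ into $\mathcal{Q}$, so for $E=(7,3)$, $F=(5,8)$ in an $11\times 11$ grid the edge $h_1h_2h_3(e)=((7,4),(5,9))$ is a second element of $\mathcal{Q}$ in the same orbit, contradicting statement 2; for the centrally symmetric choice $E=(7,3)$, $F=(5,9)$ we even get $h_1h_2h_3(e)=e$, so $|h(e)|\le 4$, contradicting statement 1; and the nearly horizontal edge $E=(9,5)$, $F=(2,6)$ lies in $\mathcal{P}$ (it has $\Gain'=5$) yet its orbit $\{(\pm 7,\pm 1)\}$ never satisfies $a>0$, $b>0$, $a<b$, so it meets no element of $\mathcal{Q}$, contradicting statement 3.

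To be fair, you did not introduce this flaw: the paper's own proof rests on the identical assertion (each $h_i$ flips exactly inequality $i$ and keeps the other two), and the counterexamples above show that the claim itself, not just its proof, fails for $H=\langle h_1,h_2,h_3\rangle$ as defined. A repair requires a different group: one must adjoin the reflection of the square grid in its diagonal, whose action on $(a,b)$, suitably combined with the endpoint swap, yields $(a,b)\mapsto(b,a)$; together with $h_1,h_2$ this realises all eight sign-and-order patterns of $(a,b)$, and an orbit-counting argument in the spirit of yours then goes through after excising the degenerate edges (e.g.\ the centrally symmetric ones), an $O(n^3)$-sized set that can be absorbed into the total-variation estimate of Claim \ref{claim:TV}. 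As written, however, your proof fails at exactly the equivariance step, and the case analysis of degenerate configurations you propose at the end cannot rescue it, because the failure occurs for completely generic edges.
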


\begin{proof}[Proof of Claim \ref{claim:statements}]
Statement 1 follows from the observation that every non-trivial group action in $H$ flips a different subset of the inequalities in Remark \ref{assumption:onetwo}, and two edges cannot coincide if they satisfy different sets of inequalities. For statement 2, since $H$ is a group, if two orbits $h(e_1), h(e_2)$ have a non-empty intersection, we must have $e_1 \in h(e_2)$. However, every non-trivial group action in $H$ flips at least one of the inequalities of Remark \ref{assumption:onetwo}, which implies that if we apply a non-trivial group action, the image of $e_2 \in \mathcal{Q}$ cannot be in $\mathcal{Q}$. For statement 3, for edge $e\in \mathcal{P}$, let $\mathbf{v}(e) \in \{0,1\}^3$ be a binary vector, whose $i^{th}$ entry indicates that $e$ violates inequality $i$ in Remark \ref{assumption:onetwo}. Then $h_1^{\mathbf{v}(e)_1}h_2^{\mathbf{v}(e)_2}h_3^{\mathbf{v}(e)_3}$ is a group action that flips exactly the inequalities that are violated by $e$, and thus maps $e$ into $\mathcal{Q}$. Let the image of $e$ under this action be $e_2$, and then indeed, $e \in h(e_2)$.
\end{proof}

\begin{claim}
\label{claim:Q_nA_n}
Let $\mathcal{A}_n$ be a sequence of events defined on graph $G'_n$ that are closed under the action of $H$. Then,
$$\lim\limits_{n\rightarrow\infty}|\P_n(\mathcal{A}_n)-\mathbf{Q}_n(\mathcal{A}_n)|=0.$$
\end{claim}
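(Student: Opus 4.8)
The plan is to route through the intermediate distribution $\tilde{\P}_n$ and to show that $\tilde{\P}_n$ and $\mathbf{Q}_n$ assign \emph{exactly} the same probability to any $H$-invariant event, so that the only error comes from Claim \ref{claim:TV}. Concretely, by the triangle inequality,
\[
|\P_n(\mathcal{A}_n) - \mathbf{Q}_n(\mathcal{A}_n)| \le |\P_n(\mathcal{A}_n) - \tilde{\P}_n(\mathcal{A}_n)| + |\tilde{\P}_n(\mathcal{A}_n) - \mathbf{Q}_n(\mathcal{A}_n)|,
\]
and the first term is at most $\|\P_n - \tilde{\P}_n\|_{TV}$ (with the normalization used in Claim \ref{claim:TV}, the probability of any event differs by at most the total variation distance), which tends to $0$ by Claim \ref{claim:TV}. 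Hence it suffices to prove that the second term vanishes, and in fact I would show it is identically zero for every $n$.

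First I would record that the defining conditions of $\mathcal{P}$ in Assumption \ref{assumption:two} (namely $x_E\neq x_F$, $\Gain'\ge 2$, and neither endpoint on the boundary of the grid) are invariant under each generator $h_1,h_2,h_3$, since reflecting the grid or swapping the two endpoints preserves both the horizontal separation and the value of $\Gain'$ and maps boundary vertices to boundary vertices. Thus $H$ maps $\mathcal{P}$ to itself, and because $\mathcal{A}_n$ is assumed closed under $H$, the set $\mathcal{A}_n\cap\mathcal{P}$ is again a union of $H$-orbits.

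Next I would invoke Claim \ref{claim:statements}: the orbits $h(e)$ with $e\in\mathcal{Q}$ each have size $8$, are pairwise disjoint, and together cover all of $\mathcal{P}$. Thus $\mathcal{Q}$ is a complete set of orbit representatives for the action of $H$ on $\mathcal{P}$, giving $|\mathcal{P}| = 8|\mathcal{Q}|$. Because $\mathcal{A}_n\cap\mathcal{P}$ is a union of whole orbits and each orbit meets $\mathcal{Q}$ in exactly one point, counting orbit by orbit yields $|\mathcal{A}_n\cap\mathcal{P}| = 8\,|\mathcal{A}_n\cap\mathcal{Q}|$. Therefore
\[
\tilde{\P}_n(\mathcal{A}_n) = \frac{|\mathcal{A}_n\cap\mathcal{P}|}{|\mathcal{P}|} = \frac{8\,|\mathcal{A}_n\cap\mathcal{Q}|}{8\,|\mathcal{Q}|} = \frac{|\mathcal{A}_n\cap\mathcal{Q}|}{|\mathcal{Q}|} = \mathbf{Q}_n(\mathcal{A}_n),
\]
so the second term in the triangle inequality is exactly zero and the claim follows.

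Since the heart of the argument is a counting identity, there is no genuine analytic obstacle; the points that require care are (i) checking that $H$ preserves $\mathcal{P}$, which is what keeps $\mathcal{A}_n\cap\mathcal{P}$ a union of orbits, and (ii) ensuring that edges of $\mathcal{A}_n$ lying outside $\mathcal{P}$ cause no difficulty — they are simply absent from the supports of both $\tilde{\P}_n$ and $\mathbf{Q}_n$, and the only discrepancy they create, which is against $\P_n$, has already been absorbed into the $\|\P_n-\tilde{\P}_n\|_{TV}$ term. The sole quantitative ingredient, the decay of the total variation distance, is supplied entirely by Claim \ref{claim:TV}.
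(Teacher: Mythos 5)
Your proof is correct and takes essentially the same route as the paper: both pass through the intermediate distribution $\tilde{\P}_n$, use the orbit structure from Claim \ref{claim:statements} to obtain the exact identity $\tilde{\P}_n(\mathcal{A}_n) = \mathbf{Q}_n(\mathcal{A}_n)$, and then absorb the only remaining discrepancy into the total variation bound of Claim \ref{claim:TV}. If anything, you are slightly more careful than the paper in explicitly verifying that $H$ preserves $\mathcal{P}$ and in writing $|\mathcal{A}_n \cap \mathcal{P}|$ where the paper writes $|\mathcal{A}_n|$ (which tacitly assumes $\mathcal{A}_n \subseteq \mathcal{P}$).
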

\begin{proof}[Proof of Claim \ref{claim:Q_nA_n}]
The three statements of Claim \ref{claim:statements} together imply that the orbits $h(e)$ of $e \in \mathcal{Q}$ partition $\mathcal{P}$ into sets of cardinality $8$. A simple corollary is that $|\mathcal{P}|=8|\mathcal{Q}|$.

Let us suppose that event $\mathcal{A}_n$ is closed under the action of $H$, or formally as $e \in \mathcal{A}_n$ implies $h(e) \subset \mathcal{A}_n$. This closedness property, combined with Claim \ref{claim:statements} implies that the edges in $\mathcal{A}_n$ can also be counted as 8 times the number of edges in $\mathcal{A}_n \cap \mathcal{Q}$. Then,
\begin{equation}
\label{eq:orbits}
    \tilde{\P}_n(\mathcal{A}_n)=\frac{|\mathcal{A}_n|}{|\mathcal{P}|} = \frac{8 |\mathcal{A}_n \cap \mathcal{Q}|}{8|\mathcal{Q}|} = \mathbf{Q}_n(\mathcal{A}_n).
\end{equation}
Finally, we combine equation \eqref{eq:orbits} with Claim \ref{claim:TV} as 
$$\lim\limits_{n\rightarrow\infty}|\P_n(\mathcal{A}_n)-\mathbf{Q}_n(\mathcal{A}_n)|=\lim\limits_{n\rightarrow\infty}|\P_n(\mathcal{A}_n)-\tilde{\P}_n(\mathcal{A}_n)|\le \lim\limits_{n\rightarrow\infty}\|\P_n(\mathcal{A}_n)-\tilde{\P}_n(\mathcal{A}_n)\|_{TV}=0,$$
and the proof is completed.
\end{proof}

Now we have all the ingredients to prove Theorem \ref{thm_grid_random}.

\begin{proof}[Proof of Theorem \ref{thm_grid_random}]

Since all events in the statement of Theorem \ref{thm_grid_random} are closed under the action of $H$ on the square grid, Remark \ref{claim:Q_nA_n} shows that it is enough to prove equations \eqref{eq:main_first}-\eqref{eq:main_third} for distribution $\mathbf{Q}_n$. Note that because of statements 1 and 3 of Remark \ref{assumption:onetwo}, $\mathrm{min}(|x_E - x_F|, |y_E - y_F|) = x_E - x_F$ for edges in $\mathcal{Q}$. Hence, the second condition in \eqref{eq:main_second} and \eqref{eq:main_third} reduces to $x_E - x_F < \Gain'/2 + 2$ for distribution $\mathbf{Q}_n$.

The rest of the proof relies on Lemmas \ref{lem:ge3}-\ref{lem:gain_even} given in Section \ref{sec:key_lemmas}, which have purely deterministic nature. Lemma \ref{lem:ge3} shows that for extra edges in $\mathcal{Q}$ (that is edges satisfying Assumption \ref{assumption:one} and \ref{assumption:two}), the metric dimension of $G'$ will be at least three deterministically, which, combined with Theorem \ref{th:atmost_4}, gives equation \eqref{eq:main_first}. Lemma \ref{lem:gain_odd} shows that there exists a resolving set of cardinality three for every extra edge in $\mathcal{Q}$ with an odd $\Gain'$. For the extra edges in $\mathcal{Q}$ with an even $\Gain'$ and with $x_E - x_F < \Gain'/2 + 2$, there exist different resolving sets of cardinality three, which is proved in Lemma \ref{lem:gain_even}. Thus, Lemmas \ref{lem:ge3}, \ref{lem:gain_odd} and \ref{lem:gain_even} combined imply equation \eqref{eq:main_second}.

% Finally, if $Gain'$ is even and $x_E - x_F \geq \frac{Gain'}{2} + 2$, MD will be exactly 4 which is showed by Lemma \ref{lem:MD4}.

% \begin{figure}[!ht]
%     \centering
%     \includegraphics[scale = 0.4]{}
%     \caption{"case tree" for proving Theorem \ref{thm_grid_random}}
%     \label{fig:my_label}
% \end{figure}
Finally, we show equation \eqref{eq:main_third}. Let us denote by $C$ the subset of vertex pairs in $\mathcal{Q}$ that satisfy the condition in equation \eqref{eq:main_third}, i.e., 
$$C = \left\{ (E,F) \in \mathcal{Q} \;\middle|\; \Gain' \text{ is odd or } x_E - x_F < \frac{\Gain'}{2} + 2 \right\}.$$
Let the complement of $C$ be 
$$\bar{C} = (V \times V) \setminus C = \left\{ (E,F) \in \mathcal{Q} \;\middle|\; \Gain' \text{ is even and } x_E - x_F \geq \frac{\Gain'}{2} + 2 \right\}.$$

Next, we calculate $|\bar{C}|/|\mathcal{Q}|$. Let $x_E - x_F = a$ and $y_F - y_E = b$, which together with Assumption \ref{assumption:two} gives $$b-a -1 =\Gain'.$$
Then, the conditions on $a$, $b$ that need to be satisfied for an edge to be in $\bar{C}$ can be reformulated as : 
\begin{enumerate}
\item $b - a$ is odd (equivalent to $\Gain'$ is even)
\item $b - a \geq 3$ (equivalent to $\Gain' \geq 2$)
\item $a \geq \frac{b}{3} + 1$ (equivalent to $x_E - x_F \geq \frac{\Gain'}{2} + 2$)
\item $1 \leq a, b \leq n -2$, as the extra edge is not horizontal nor vertical, and does not touch the boundary of the grid.
\end{enumerate}
Let $b - a = 2i + 1$ with $i \geq 1$. With this parameterization, the first two conditions are already obviously satisfied. Substituting $a = b - 2i - 1$ into $a \geq b/3 +1$ gives $b \geq 3(i + 1)$. Hence, for a fixed $i$, $b$ can have values from $3(i+1)$ to $n-2$, and consequently, the maximum value that $i$ can take is $\floor{(n-5)/2}$. Note that for a given pair ($a$,$b$), there are $(n - a - 1)(n - b - 1)$ possible edges in $G_n$ which do not touch the boundary. Therefore,
% the total number of required edges is  
% {\color{red}Since $1 \leq a, b \leq n - 2$, the total number of such edges is
$$|\bar{C}| = \sum_{i = 1}^{\floor{\frac{n - 5}{3}}} \sum_{b = 3(i + 1)}^{n - 2} (n - b + 2i)(n - b - 1),$$
which reduces asymptotically to $$|\bar{C}| = \frac{1}{27}n^4 + O(n^3).$$
Therefore, \begin{align}
        \mathbf{Q}_n(\bar{C}) = \frac{|\bar{C}|}{|\mathcal{Q}|} = \frac{\frac{1}{27}n^4 + O(n^3)}{\frac18 n^4 + O(n^3)}  & = \frac{8}{27} + O\left(\frac{1}{n}\right),
\end{align}
Hence, $\mathbf{Q}_n(C) = 1 - \mathbf{Q}_n(\bar{C}) \rightarrow 19/27$, which shows equation \eqref{eq:main_third} and completes proof of the theorem.
\end{proof}

In the rest of this section we state and prove Lemmas \ref{lem:ge3}-\ref{lem:gain_even}, which we will do in Section \ref{sec:key_lemmas}.
% \ref{lem:MD4}. 
%Lemmas \ref{lem:gain_odd} and \ref{lem:gain_even} are upper bounds, hence it suffices for us to exhibit a resolving set. For Lemma \ref{lem:ge3}, we need to prove that no subset of nodes of size two is a resolving set. 
Before introducing these lemmas, we prove some claims that will be useful later. We start by simple claims in this subsection, then in Section \ref{sec:exact} we prove more involved results that charaterize the normal and special regions of $G'$.

The following claim shows that resolving sets must have nodes on the boundaries of the grid, which helps us reduce the number of subsets that we must prove are non-resolving.

\begin{claim} \label{boundary}
% There must be at least $2$ nodes on the boundary of the grid such that they are on opposite boundaries and they are not opposite corners. 
If $R$ is any resolving set of $G'$ (the grid with extra edge EF) satisfying Assumption \ref{assumption:one} and \ref{assumption:two}, there must be two vertices $X$ and $Y$ in $R$ which satisfy following two properties:
\begin{enumerate}
    \item They are on opposite boundaries of $G'$
    \item If one of them is a corner, the other one must be an adjacent corner.
\end{enumerate}
\end{claim}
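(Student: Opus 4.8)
The plan is to prove the contrapositive: if a set $R$ contains no pair $X,Y$ with both listed properties, then $R$ fails to resolve $G'$. First I would translate the hypothesis combinatorially. Call a pair $\{X,Y\}$ \emph{admissible} if it satisfies properties 1 and 2; unwinding the definitions, $\{X,Y\}$ is admissible exactly when it consists of two non-corner vertices on opposite sides, or of two adjacent corners. Hence ``$R$ has no admissible pair'' is equivalent to the conjunction of: \textbf{(I)} the corners contained in the resolving set form an independent set of the corner $4$-cycle (in cyclic order $P,Q,R,S$), i.e. they lie among the two opposite corners $\{P,R\}$ or among the two opposite corners $\{Q,S\}$; and \textbf{(II)} $R$ contains no two non-corner vertices on opposite sides (not one on the left and one on the right, and not one on the top and one on the bottom). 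These two constraints confine the boundary vertices of $R$, and the proof is a case analysis over the configurations they permit.

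The geometric engine is a ``shadow'' sublemma for the plain grid, which I would establish by the elementary $\ell_1$ computation: two vertices on a common anti-diagonal (equal $x+y$) have the same distance to a landmark $L$ \emph{unless} $L$ lies strictly to their upper-right or lower-left, while two vertices on a common diagonal (equal $x-y$) are separated only by a landmark to their upper-left or lower-right. Using this, the key reduction is that the extra edge becomes \emph{free}: suppose I can find two distinct vertices $A,B$ lying on a common (anti-)diagonal such that \emph{every} landmark of $R$, as well as both endpoints $E$ and $F$, falls in the equal-distance zone of $\{A,B\}$. Then $d_G(L,A)=d_G(L,B)$ for all $L\in R$, and $EA=EB$, $FA=FB$; plugging these into Remark \ref{distance} gives $d_{G'}(L,A)=\min(LA,\,LE+1+FA,\,LF+1+EA)=d_{G'}(L,B)$ for every $L\in R$, so $A$ and $B$ are indistinguishable in $G'$ and $R$ is not resolving. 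Thus it suffices to exhibit, in each configuration allowed by (I)--(II), such a common-(anti)diagonal pair deep in a suitable corner.

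To build that pair I would exploit that, by Assumption \ref{assumption:two}, $E$ and $F$ are interior, and that under Assumption \ref{assumption:one} the edge points from the lower-left $F$ to the upper-right $E$, i.e. along the anti-diagonal direction; consequently, placing $A,B$ on an anti-diagonal strictly beyond that of $E$ and $F$ automatically puts $E,F$ into the equal-distance (upper-left/lower-right) zone, so only the landmarks of $R$ remain to be controlled. Constraints (I)--(II) are exactly what make some anti-diagonal (or, near the corners $Q,S$, some diagonal) family uncovered: the absence of the two ``guarding'' corners and of an opposite pair of non-corner landmarks leaves a whole sliver of (anti-)diagonals all of whose separating upper-right/lower-left regions are empty, and one pushes $A,B$ far enough into that sliver that all of $R$ sits in the equal-distance zone. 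I expect the main obstacle to be precisely this bookkeeping: for each configuration under (I)--(II) one must verify that an admissible corner region and a common-(anti)diagonal direction can be chosen so that all boundary landmarks of $R$ \emph{and} both $E,F$ lie in the equal-distance zone simultaneously. The near-corner pairs such as $\{(2,1),(1,2)\}$ alone do not suffice for this — for instance, one non-corner top landmark together with one non-corner right landmark and the corner $P$ separates all four near-corner pairs, yet leaves a bulk anti-diagonal pair in the lower-right unresolved — so the ``deep'' bulk pairs are indispensable and their placement relative to both the landmarks and the edge is the delicate part.
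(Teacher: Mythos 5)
Your strategy has a fatal structural gap: every constraint you derive, and every construction you propose, concerns only the \emph{boundary} landmarks of $R$. The conditions (I)--(II) you extract from ``no admissible pair'' place no restriction at all on interior vertices, yet the claim quantifies over arbitrary resolving sets, which may contain many interior landmarks --- for instance all of them. Against such a set your engine cannot run: any ``deep bulk pair'' $A,B$ is separated by an interior landmark adjacent to $A$, which is at $G'$-distance $1$ from $A$ and at least $2$ from $B$ (the extra edge cannot interfere, because both of its endpoints are interior by Assumption \ref{assumption:two}, so it creates no new adjacency between an interior and a boundary vertex). This is precisely why the paper argues through the near-corner pairs you dismiss: the pairs $\{(1,2),(2,1)\}$, $\{(n-1,1),(n,2)\}$, $\{(n,m-1),(n-1,m)\}$, $\{(1,m-1),(2,m)\}$ are the \emph{only} pairs that no interior vertex separates in $G$, and since $E$ and $F$ are interior they are equidistant from each such pair, so by Remark \ref{distance} any vertex equidistant from the pair in $G$ remains equidistant in $G'$. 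Hence every resolving set of $G'$ must meet each of the four sets ``the two sides incident to a corner, minus that corner,'' and a short case analysis over which sides and corners carry landmarks finishes the paper's proof. Bulk pairs can never constrain a resolving set; the near-corner pairs are the whole story.

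There is a second, related problem: under the literal reading of property 2 that you adopt (a corner may only be paired with an adjacent corner, so corner-plus-non-corner pairs are inadmissible), the statement you set out to prove is actually \emph{false}, so no proof could have succeeded. Take $n=m=20$, $E=(10,5)$, $F=(9,9)$ (Assumptions \ref{assumption:one} and \ref{assumption:two} hold, with $\Gain'=2$), and let the landmark set consist of all interior vertices together with $P$, $(2,1)$ and $(n,2)$. Pairs of boundary vertices other than the four near-corner pairs are separated by interior vertices; the near-corner pairs at $P$ and $Q$ contain the landmarks $(2,1)$ and $(n,2)$ respectively; the pair $\{(n,m-1),(n-1,m)\}$ is separated by $(n,2)$ (distances $17$ and $19$, while any detour through $e$ has length at least $35$); and the pair $\{(1,m-1),(2,m)\}$ is separated by $P$, since both detours from $P$ through $e$ have length at least $x_E+x_F+y_E-y_F+m-3=m+12>m$, so these two distances remain $m-2$ and $m$. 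This set therefore resolves $G'$, yet its only corner is $P$ and its two non-corner boundary landmarks lie on adjacent sides, so it contains no pair admissible in your sense. The claim is true --- and is what the paper proves and later uses --- only under the weaker reading in which the excluded configuration is a pair of \emph{opposite} corners, so that a corner together with a non-corner landmark on an opposite side (such as $P$ and $(n,2)$ above) does qualify. Your observation that the near-corner pairs ``do not suffice'' is an artifact of the stronger reading, not a defect of the paper's method.
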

\begin{proof}
Consider vertices $A=(1, 2)$ and $B=(2, 1)$. It is easy to see that only vertices on boundaries $PQ$ and $PS$ except corner $P$ will be able to distinguish $A$ and $B$ as none of $E$ and $F$ is on the boundaries. So we need at least one vertex on the union of the boundaries $PQ$ and $PS$, excluding $P$, in the resolving set. A similar argument holds for the other $4$ corners, hence we can deduce the two required conditions.
\end{proof}

\begin{claim} \label{parity}
For all $A,B \in V$ if $\Gain(A, B)$ is positive, it will have same parity as $\Gain$ and $\Gain'$ as defined in Definition \ref{def:gain_prime}.
\end{claim}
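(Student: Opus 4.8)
The plan is to prove that any positive gain $\Gain(A,B)$ shares the parity of $\Gain$ (equivalently, of $\Gain'$, since these two differ by an even amount in the grid). The starting point is the explicit distance formula from Claim \ref{precise_distance}. If $\Gain(A,B)>0$, then $A$ and $B$ lie in opposite special regions, so by Claim \ref{precise_distance} the new distance is realized by a shortest path through the extra edge, say $d_{G'}(A,B)=AE+1+FB$ (the case $AF+1+EB$ being symmetric). Hence
\[
\Gain(A,B)=AB-d_{G'}(A,B)=AB-AE-FB-1.
\]
The whole argument will rest on a parity computation of the right-hand side using the $\ell_1$ (Manhattan) structure of grid distances.

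The first key step is to recall that in a grid graph the distance between two vertices is the sum of the coordinate differences, so for any two vertices $X,Y$ the quantity $XY$ has the same parity as $(x_X^{(1)}+\dots+x_X^{(d)})-(y\text{-coordinates of }Y)$; more precisely, $XY \equiv \sigma(X)+\sigma(Y) \pmod 2$, where $\sigma(Z)$ denotes the sum of the coordinates of $Z$ (this is because $|a-b|\equiv a+b \pmod 2$, summed over coordinates). I would state and use this elementary ``bipartiteness'' observation: grid graphs are bipartite, and the distance between two vertices is even iff they lie in the same colour class. With this in hand, I compute the parity of $\Gain(A,B)=AB-AE-FB-1$ by expanding each distance as a sum of coordinate sums modulo $2$: the terms $\sigma(A),\sigma(E),\sigma(F),\sigma(B)$ appear, and after cancellation one finds
\[
\Gain(A,B)\equiv \sigma(E)+\sigma(F)+1 \equiv EF + 1 \pmod 2,
\]
since $EF\equiv \sigma(E)+\sigma(F)\pmod 2$ as well.

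The second step is simply to observe that $\Gain=EF-1$ by Definition \ref{def:gain_prime}, so $\Gain \equiv EF+1 \pmod 2$, which matches the parity of $\Gain(A,B)$ computed above; this settles that every positive gain has the same parity as $\Gain$. For the agreement with $\Gain'$, I would note that $\Gain'=\max(0,\bigl||y_F-y_E|-|x_E-x_F|\bigr|-1)$, and that $\bigl||y_F-y_E|-|x_E-x_F|\bigr|$ has the same parity as $|y_F-y_E|+|x_E-x_F|=EF$ (since $|p-q|\equiv p+q \pmod 2$). Therefore $\Gain'\equiv EF-1 \equiv \Gain \pmod 2$ whenever $\Gain'>0$, so $\Gain$ and $\Gain'$ always share parity, and the claim follows.

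I expect the only mild subtlety — hence the main obstacle — to be bookkeeping the case distinction cleanly: one must confirm that the symmetric case $d_{G'}(A,B)=AF+1+EB$ yields the identical parity (it does, by the same cancellation with the roles of $E,F$ swapped, which does not affect $\sigma(E)+\sigma(F)$), and that the $\Gain'$ parity claim is unaffected by the $\max(0,\cdot)$ truncation, which is fine because the truncation only occurs when $\Gain'=0$ and the statement concerns positive gains. No deep ideas are needed beyond the bipartite/parity structure of the grid; the argument is entirely a modular-arithmetic cancellation, so I would keep the presentation short and emphasize the single identity $XY\equiv\sigma(X)+\sigma(Y)\pmod 2$.
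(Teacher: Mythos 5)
Your proof is correct and takes essentially the same approach as the paper: both reduce $\Gain(A,B)=AB-AE-FB-1$ to a parity computation on the Manhattan-distance expansion via $|a-b|\equiv a+b\pmod 2$ (the paper drops absolute values to signed differences, while you package the same fact as the bipartiteness identity $XY\equiv\sigma(X)+\sigma(Y)\pmod 2$), then compare with $\Gain$ and $\Gain'$. One small correction to your side remark: the $\max(0,\cdot)$ truncation in $\Gain'$ is harmless not because the claim concerns positive gains, but because this section's standing Assumption \ref{assumption:two} forces $\Gain'\ge 2$; if $|y_F-y_E|=|x_E-x_F|$ were allowed, then $\Gain'=0$ would be even while positive gains (e.g.\ $\Gain(E,F)=\Gain$) would be odd, so the stated parity match with $\Gain'$ would genuinely fail there.
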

\begin{proof}
 Note that $\Gain(A, B) > 0$ indicates that $A$ uses $e$ to reach $B$. For this to happen, we must have $A \in R_F$ and $B \in R_E$ (or the other way around), in which case $d_{G'}(A, B) = AE + 1 + FB$. This gives
 \begin{equation*}
     \begin{aligned}
         \Gain(A, B)  & =  AB - (AE + 1 + FB)\\
                    &  = |x_A - x_B| + |y_A - y_B| - (|x_A - x_E| + |y_A - y_E| + 1 + |x_F - x_B| + |y_F - y_B|),\\
     \end{aligned}
 \end{equation*}
 which has same parity as
 \begin{equation*}
    (x_A - x_B) + (y_A - y_B) - ((x_A - x_E) + (y_A - y_E) + 1 + (x_F - x_B) + (y_F - y_B)) = (y_E - y_F) - (x_E - x_F) - 1, 
 \end{equation*}
which has the same parity as $\Gain = (y_F - y_E) + (x_E - x_F) - 1$ and $\Gain' = (y_F - y_E) - (x_E - x_F) - 1$. 
\end{proof}

\begin{remark}
\label{four_points}
Consider a vertex $X$ and its $4$ neighbouring vertices $X_1, X_2, X_3, X_4$. A single vertex in the graph cannot distinguish all of these $4$ vertices.
\end{remark}

\begin{proof}
Suppose that vertex $A$ distinguishes all $4$ vertices. By triangular inequality, $AX_i$ can only take $3$ distinct values, namely $AX - 1$, $AX$ and $AX+1$. Hence, by pigeon hole principle, at least $2$ vertices will have same distance to~$A$.
\end{proof}

\subsubsection{Exact characterization of normal and special regions}
\label{sec:exact}

We prove that in two dimensions, under Assumptions \ref{assumption:one} and \ref{assumption:two}, the normal region takes a fairly regular shape. As shown in Figure \ref{fig:claim_89}, we only have two cases based on the parity of $\Gain'$. This is in sharp contrast with higher dimensions, where the normal region can take very different shapes (see Figure \ref{fig:3dplots}).

The following quantities will be useful to describe the shape of the normal region. 

\begin{definition}[$\alpha,\beta$]
\label{alpha_beta}
Let $$\alpha = \frac{1 + y_F + y_E + x_F - x_E}{2}$$ and $$\beta = \frac{1 + y_F + y_E + x_E - x_F}{2}.$$ \end{definition}

\begin{remark}
\label{rm:alpha_beta}
We make the following observations about $\alpha$ and $\beta$ under Assumptions \ref{assumption:one} and \ref{assumption:two}.
\begin{enumerate}
    \item Note that $\beta - \alpha = x_E - x_F$. Assumptions \ref{assumption:one} and \ref{assumption:two} imply $x_E > x_F$, and since $x_E, x_F$ are both integers, we know that $\beta - \alpha \ge 1$. Consequently, $\floor{\beta} > \floor{\alpha}$ holds.
    \item Using Assumptions \ref{assumption:one} and \ref{assumption:two}, we find the following useful equalities and inequalities:
    \begin{equation}
    \label{eq:alpha_gains}
    \alpha = y_F-\frac{\Gain}{2} = y_E + \frac{\Gain'+2}{2} \ge y_E+2,
    \end{equation}
    and
    \begin{equation}
    \label{eq:beta_gains}
    \beta= y_E + \frac{\Gain+2}{2} = y_F-\frac{\Gain'}{2} \le y_F+1.
    \end{equation}
\end{enumerate}
\end{remark}

% \begin{figure}[!ht]
%     \centering
%     \includegraphics[scale = 0.5]{claim_56_1.png}
%     \caption{Add figure for Gain odd}
%     \label{fig:claim_89}
% \end{figure}

\begin{figure}[!ht]
\centering
\begin{subfigure}{.5\textwidth}
  \centering
  \includegraphics[width=1\linewidth]{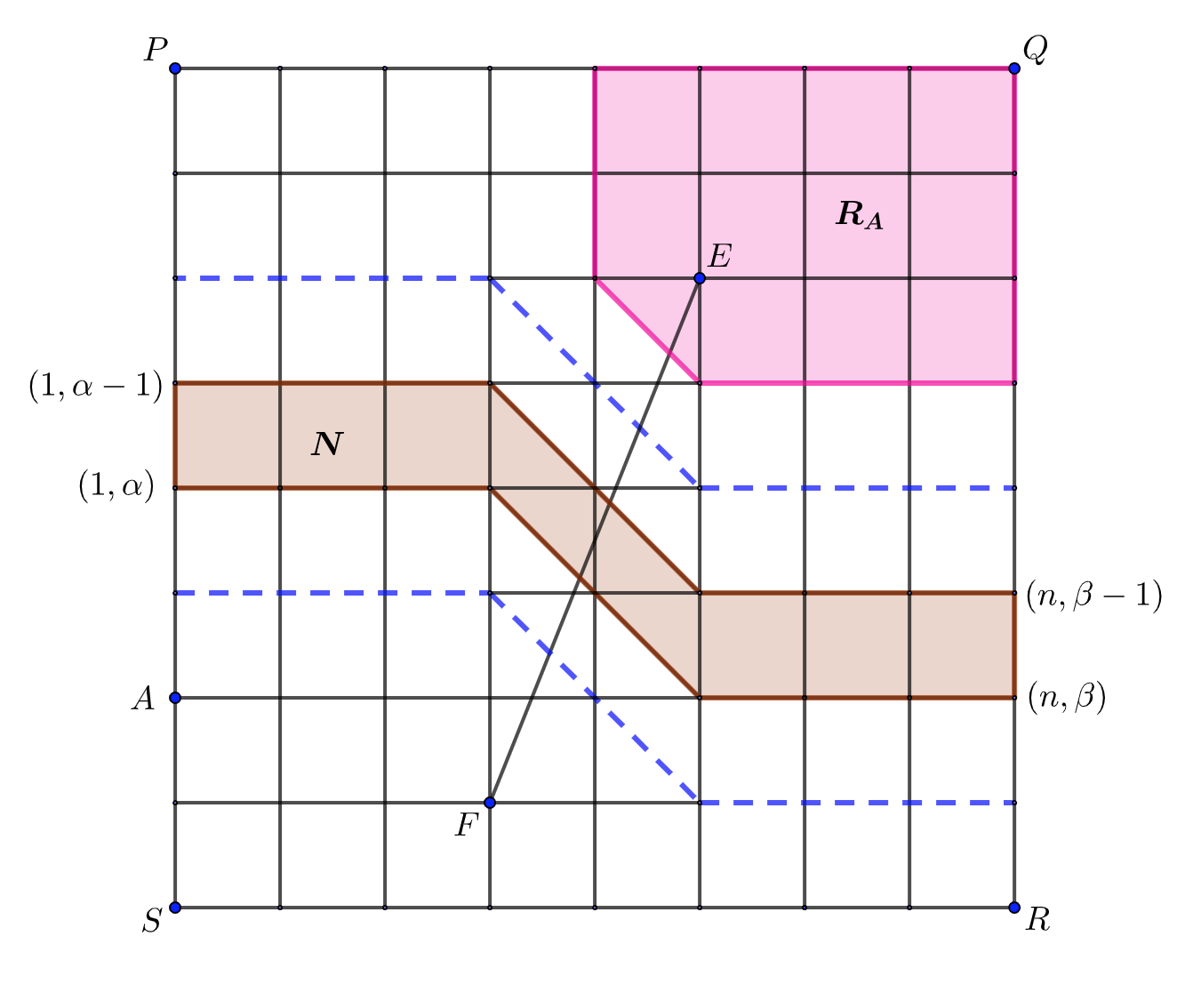}
  \caption{When \Gain' is even, the height of $N$ is 2.}
  \label{fig:claim_89_2}
\end{subfigure}%
\begin{subfigure}{.5\textwidth}
  \centering
  \includegraphics[width=0.95\linewidth]{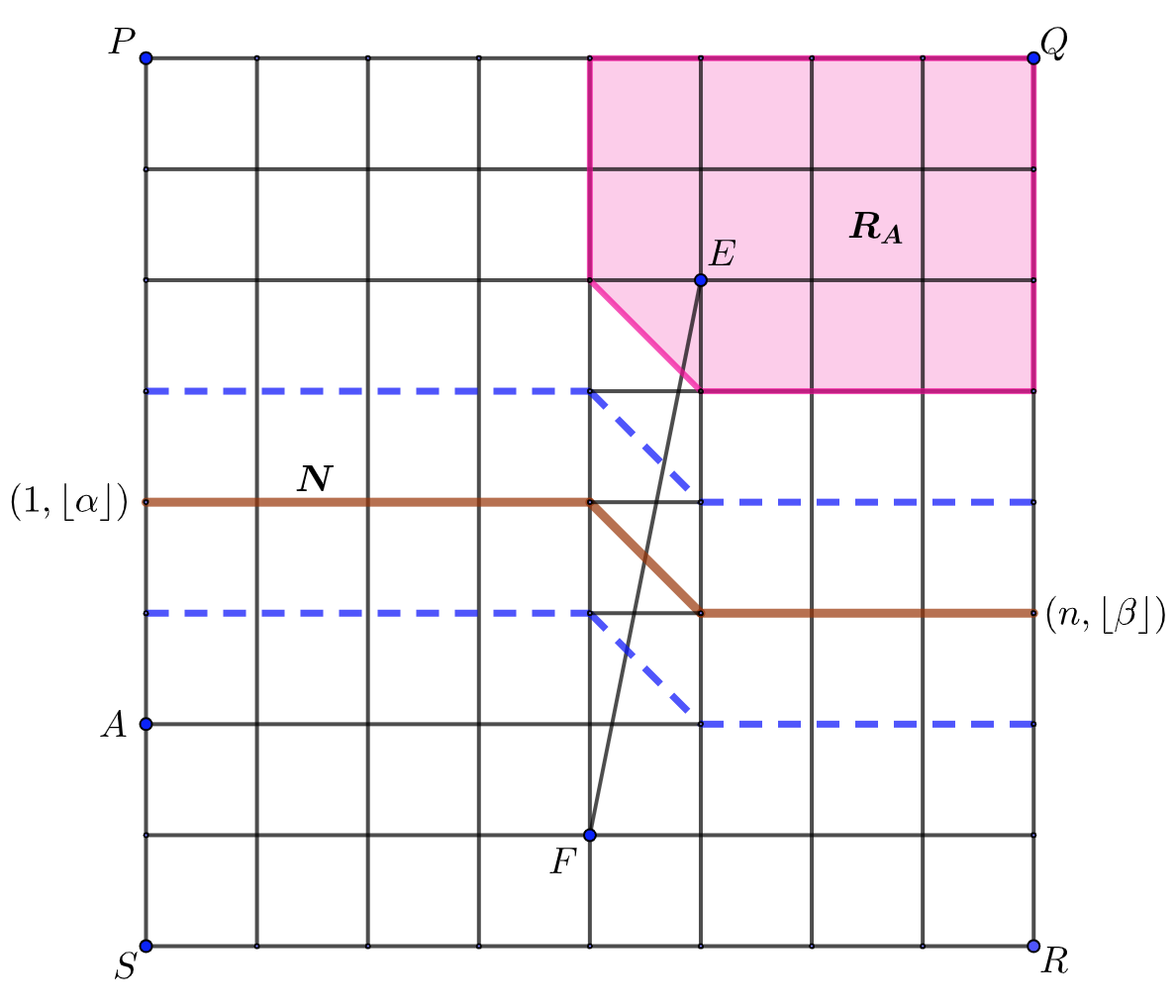}
  \caption{When $\Gain'$ is odd, the height of $N$ is 1.}
  \label{fig:claim_89_1}
\end{subfigure}
% \caption{A figure with two subfigures}
\caption{Illustration for Claims \ref{claim:normal_region_grid} and \ref{special_region}. Brown region(including the boundary), which is just a set of line segments in the case when $\Gain'$ is odd, is the normal region of the grid. Pink region(including the boundary) indicates the special region of a point $A$ belonging to $R_E$ which lies on boundary $PS$ of $G'$.} 
\label{fig:claim_89}
\end{figure}

Next, we express precisely the normal region of the grid.
% {\color{red} Here, because in Assumption \ref{assumption:two} we had $\Gain' \geq 2$, we only consider the case $\Gain' \geq 1$.}  

\begin{claim}
\label{claim:normal_region_grid}
%  By claim \ref{normal_points}, it can be shown that 
Under Assumptions \ref{assumption:one} and \ref{assumption:two}, 
%  $$N = \{(x, y) \mid x \leq x_F , y = \alpha \} \cup \\  \{ (x, y) \mid x_F \leq x \leq x_E, x-y = x_F - \alpha \} \cup \{(x, y) \mid x \geq x_E, y = \beta \}$$
%  and for $\Gain$ even, 
\begin{equation}
    \begin{aligned}
    \label{n}
        N = & \{(x, y) \mid x < x_F , \alpha - 1 \leq y \leq \alpha\} \cup \\ 
        & \{ (x, y) \mid x_F \leq x \leq x_E,x_F - \alpha \leq  x-y \leq x_F - \alpha + 1 \} \cup \\
        & \{(x, y) \mid x > x_E, \beta - 1\leq y \leq  \beta \}
    \end{aligned}
\end{equation}
% $$\{(x, y) \mid x \leq x_F , y = \alpha - 1 \} \cup \\  \{ (x, y) \mid x_F \leq x \leq x_E, x-y = x_F - \alpha + 1 \} \cup \{(x, y) \mid x \geq x_E, y = \beta - 1 \}$$\\
\end{claim}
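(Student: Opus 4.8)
The plan is to reduce everything to the characterization of the normal region furnished by Claim \ref{partition}, namely $N = \{A \in V : |AE - AF| \le 1\}$, and then compute $AE - AF$ explicitly as a function of the coordinates. Writing $A = (x,y)$, I would split the difference as
$$AE - AF = \bigl(|x - x_E| - |x - x_F|\bigr) + \bigl(|y - y_E| - |y - y_F|\bigr) =: \Delta_x + \Delta_y.$$
The term $\Delta_x$ is piecewise in the three $x$-regimes appearing in \eqref{n}: it equals $x_E - x_F$ for $x < x_F$, equals $x_E + x_F - 2x$ for $x_F \le x \le x_E$, and equals $-(x_E - x_F)$ for $x > x_E$, using $x_F < x_E$ from Remark \ref{assumption:onetwo}. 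The term $\Delta_y$ equals $y_E - y_F$ for $y \le y_E$, equals $2y - y_E - y_F$ for $y_E \le y \le y_F$, and equals $y_F - y_E$ for $y \ge y_F$.

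Next I would argue that every point of $N$ must have $y$ strictly between $y_E$ and $y_F$, so that $\Delta_y$ is given by its linear middle branch. Indeed, setting $a = x_E - x_F$ and $b = y_F - y_E$, one checks that in every $x$-regime, whenever $y \le y_E$ or $y \ge y_F$ one has $|AE - AF| \ge b - a = \Gain' + 1 \ge 3$, where the last bound uses Assumption \ref{assumption:two} ($\Gain' \ge 2$); such points therefore lie in $R_E \cup R_F$ and not in $N$. With $\Delta_y = 2y - y_E - y_F$ in force, the defining inequalities $-1 \le \Delta_x + \Delta_y \le 1$ become linear, and solving them regime by regime yields exactly the three sets of \eqref{n}: for $x < x_F$ the band $\alpha - 1 \le y \le \alpha$, for $x > x_E$ the band $\beta - 1 \le y \le \beta$, and for $x_F \le x \le x_E$ the diagonal strip $x_F - \alpha \le x - y \le x_F - \alpha + 1$. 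The identification of the solved thresholds with $\alpha$ and $\beta$ is a direct substitution of Definition \ref{alpha_beta}.

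To make this rigorous I would then verify three consistency points. First, that the middle-branch computation is self-consistent: all four of $\alpha - 1,\ \alpha,\ \beta - 1,\ \beta$ lie in the open interval $(y_E, y_F)$, which follows from Remark \ref{rm:alpha_beta} together with $\Gain,\Gain' \ge 2$, giving $\alpha \ge y_E + 2$, $\beta \le y_F - 1$, and $\alpha < \beta$ via $\beta - \alpha = x_E - x_F \ge 1$. Second, that the three pieces glue continuously: at $x = x_F$ the diagonal strip reduces to $\alpha - 1 \le y \le \alpha$, matching the left band, and at $x = x_E$ it reduces to $\beta - 1 \le y \le \beta$, matching the right band, both using $\beta - \alpha = x_E - x_F$. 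Third, that points of $\{x_F \le x \le x_E\}$ lying off the diagonal band indeed fail $|AE - AF| \le 1$, which falls out of the same linear inequality.

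I expect the main obstacle to be bookkeeping rather than concept: the delicate step is confirming that $N$ never reaches the rows $y \le y_E$ or $y \ge y_F$, since this is precisely what licenses replacing $\Delta_y$ by its linear branch everywhere, and is where Assumption \ref{assumption:two} (through $\Gain' \ge 2$) is genuinely used. Once that is settled, the parity of $\Gain'$ enters only when intersecting the real-valued bands of \eqref{n} with the integer lattice: $\alpha$ and $\beta$ are integers exactly when $\Gain'$ is even, producing a height-$2$ region, and half-integers when $\Gain'$ is odd, collapsing each band to a single row of lattice points. This accounts for the two cases of Figure \ref{fig:claim_89} without any separate derivation.
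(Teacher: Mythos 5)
Your proposal is correct and takes essentially the same route as the paper's own proof: both reduce to the condition $|AE-AF|\le 1$ from Claim \ref{partition}, rule out $y\le y_E$ and $y\ge y_F$ via the triangle inequality together with $\Gain'\ge 2$, and then solve the linearized inequalities in the three $x$-regimes to obtain the $\alpha$- and $\beta$-bands of \eqref{n}. The additional consistency checks you list (the bands lying inside $(y_E,y_F)$ and the gluing at $x=x_F$ and $x=x_E$) are implicit in the paper's computation, so they are a welcome but not substantively different addition.
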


Geometrically, the normal region will be the union of three strips of ``height'' 1 or 2: two horizontal strips with $y$-coordinates around $\alpha$ and $\beta$ respectively, and a third strip at 45 degree angle joining the two horizontal strips (see Figure \ref{fig:claim_89}). By ``height'' here we mean the number of vertices corresponding to each $x$ coordinate. The height of the strips depends on whether $\alpha$ and $\beta$ are integers or not (and thus on the parity of $\Gain'$): when $\Gain'$ is even, $\alpha$ and $\beta$ are integers and the height of the strip is 2; when $\Gain'$ is odd, $\alpha$ and $\beta$ are odd integers divided by two, and the height of the strip is 1.

The union of the three strips forms a single continuous strip, which separates the grid into two connected components along the $y$-axis. The $y$-coordinates of the strip lie completely between the $y$-coordinates of nodes $E$ and $F$, which means that for every $x$-coordinate, the normal vertices are sandwiched between non-normal vertices along the $y$-axis. Here we rely heavily on the inequality $\Gain'\ge 2$ in Assumption \ref{assumption:two}; for $\Gain'=0$ the normal region can touch the $PQ$ and $RS$ boundaries of the grid.

\begin{proof}[Proof of Claim \ref{claim:normal_region_grid}]
Let $A = (x_A, y_A)$ be a vertex in $N$. By Claim \ref{partition}, being a normal vertex is equivalent to 
\begin{equation}
\label{diff_leq_1}
   |AE - AF|= ||x_A - x_F| + |y_A - y_F| - |x_A - x_E| - |y_A - y_E|| \leq 1.
\end{equation}
First we show that we cannot have $y_A < y_E$. If $y_A < y_E$, equation \eqref{diff_leq_1} reduces to 
\begin{equation}
    \label{diff_leq_1_contra}
    -1 \leq (y_F - y_E) + |x_A - x_F| - |x_A - x_E| \leq 1.
\end{equation}
Next, by the triangular inequality we have 

$$-(x_E - x_F) \leq |x_A - x_F| -  |x_A - x_E| \leq (x_E - x_F),$$
and therefore by Definition \ref{def:gain_prime},
\begin{equation}
    \label{diff_leq_1_contra1}
        \Gain' + 1 \leq  (y_F - y_E) + |x_A - x_F| - |x_A - x_E|
\end{equation}

Due to the assumption $\Gain' \geq 2$, equations \eqref{diff_leq_1_contra} and \eqref{diff_leq_1_contra1} contradict each other. Hence, we cannot have $y_A < y_E$. 
Similarly it can be shown that we cannot have $y_A > y_F$. In short, for $A$ to be a normal point, we must have $y_E \leq y_A \leq y_F$ (i.e., the $y_A$ coordinate must be between $E$ and $F$).
This reduces equation \eqref{diff_leq_1} to
\begin{equation}
\label{diff_leq_11}
    -1 \leq |x_A - x_F| + y_F - y_A - |x_A - x_E| - y_A + y_E \leq 1
\end{equation}
Now we are going to have three cases depending on whether $x_A < x_F$, $x_A > x_E$ or $x_F \le x_A \le x_E$. When $x_A < x_F < x_E$, equation \eqref{diff_leq_11} reduces to 
$$\alpha -1 =\frac{1 + y_F + y_E + x_F - x_E}{2} - 1 \leq y_A \leq \frac{1 + y_F + y_E + x_F - x_E}{2} = \alpha,$$
where $\alpha$ is given in Definition \ref{alpha_beta}. This gives the first line of equation \eqref{n}. Similarly, it can be verified that for the other two possibilities $x_F \leq x_A \leq x_E$ and $x_E < x_A$, we get the remaining two lines. 
% \begin{equation}
% \label{diff_leq_first}
%   \alpha -1 \leq y \leq \alpha
% \end{equation}
% Now for three possibilities of $x$, $x < x_F$ or $x_F \leq x \leq x_E$ or $x > x_E$, using \eqref{diff_leq_11}, we get three sets in \eqref{n} respectively. 
\end{proof}
% and points inside $R_1$ uses edge $e$ to reach $F$ i.e. $R_1$ is the special region of $F$ and similarly $R_2$ is the special region of $E$.

Now that $N$ is explicitly written in terms of the coordinates of the nodes, we can leverage the partitioning in Claim \ref{partition} to do the same for $R_E$ and $R_F$. However, we find it more instructive to express $R_E$ and $R_F$ implicitly using $N$, instead of explicit equations similar to equation \eqref{n}.

\begin{remark}
\label{rem:special_region}
Under Assumptions \ref{assumption:one} and \ref{assumption:two}, 
\begin{equation}
\label{eq:RF_explicit}
    R_F = \{ (x,y) \not\in N \mid \exists k \in \mathbb{N} \text{ with } (x, y+k) \in N \},
\end{equation}
and
\begin{equation}
\label{eq:RE_explicit}
  R_E = \{ (x,y)\not\in N \mid \exists k \in \mathbb{N} \text{ with } (x, y-k) \in N \}.  
\end{equation}
\end{remark}

\begin{proof}
By Claim \ref{claim:normal_region_grid}, the normal region splits $V$ into two connected components, one containing $E$, which we denote by $V_F$, and one containing $F$, which we denote by $V_E$. Now we show that $V_E=R_E$ and $V_F=R_F$. By Claim \ref{partition} it is clear that we cannot have two neighboring vertices $A,B$ with $A \in R_E$ and $B \in R_F$. Indeed the equations $AE-AF>1$, $BE-BF<-1$, $|AE-BE|\le 1$ and $|AF-BF|\le 1$ cannot hold at the same time. By Claim \ref{partition}, the vertices $V\setminus N$ are partitioned into $R_E$ and $R_F$, and since we cannot have two neighboring vertices split between $R_E$ and $R_F$, each connected component $V_E$ and $V_F$ must be contained entirely in $R_E$ or $R_F$. We also know that $E \in R_F$ and $F \in R_E$, which implies that the only way to assign the vertices of $V\setminus N$ into $R_E$ and $R_F$ is to have $R_E=V_E$ and $R_F=V_F$.
\end{proof}

In the next claim, we characterize the special regions of the nodes on the boundary $PS$ of the grid. This will be useful in the subsequent results as we will be mainly dealing with nodes on the boundaries. 
% It will be useful for the remaining section.
% We are interested in nodes on boundary $PS$ which have non-empty special region and which are closer to $F$ then $E$. Hence by claim \ref{normal_points} we need $A = (1, k)$ such that $$EA - FA > 1$$. This reduces to
% \begin{align}
%   k > &  \frac{y_F + y_E + x_F - x_E + 1}{2}\\
%   \geq & \floor{\frac{1 + y_F + y_E + x_F - x_E}{2}}
% \end{align}
% We call this quantity $\floor{\frac{1 + y_F + y_E + x_F - x_E}{2}}$ as $\alpha$ throughout the section. Hence 

\begin{claim}
\label{special_region}
Let $A = (1, k)$ be a point on boundary $PS$, and $\Gain_{\max}(A)$ given in Remark \ref{gain_max_remark}. Then, under Assumptions \ref{assumption:one} and \ref{assumption:two}, 
\begin{enumerate}
    \item 
if $A$ belongs to $R_E$, i.e., $k > \alpha$, with $\alpha$ given in Definition \ref{alpha_beta},
\begin{equation}\label{r_a}
\begin{aligned}
R_A = &\{(x, y) \mid x_E \leq x , y \leq y_E \} \cup \\ 
& \left\{ (x, y) \;\middle|\; x_E \leq x, 0 \leq y - y_E < \frac{\Gain_{\max}(A)}{2} \right\} \cup \\ 
& \left\{(x, y) \;\middle|\; y \leq y_E , 0 \leq x_E - x < \frac{\Gain_{\max}(A)}{2} \right\} \cup \\ 
& \left\{(x, y) \;\middle|\; x \leq x_E, y_E \leq y, (x_E - x) + (y - y_E) < \frac{\Gain_{\max}(A)}{2} \right\},
\end{aligned}
\end{equation}
and
\begin{equation}
\label{gain_max}
\Gain_{\max}(A) = 
\begin{cases}
  \Gain & \text{for }k \ge y_F\\    
  \Gain - 2(y_F - k)& \text{for } \alpha < k < y_F 
\end{cases}
\end{equation}
% where  for $k \ge y_F$ and $\Gain_{\max}(A) = $ for $k < y_F$

\item 

if $A$ belongs to $R_F$ i.e. $k < \alpha - 1$ and $\Gain' \geq 2$, 
\begin{equation}\label{r_a_2}
\begin{aligned}
R_A = &\{(x, y) \mid x_F \leq x , y_F \leq y \} \cup \\ 
& \left\{ (x, y)  \;\middle|\; x_F \leq x, 0 \leq y_F - y < \frac{\Gain_{\max}(A)}{2} \right\} \cup \\ 
& \left\{(x, y)  \;\middle|\; y_F \leq y , 0 \leq x_F - x < \frac{\Gain_{\max}(A)}{2} \right\} \cup \\ 
& \left\{(x, y)  \;\middle|\; x \leq x_F, y \leq y_F, (x_F - x) + (y_F - y) < \frac{\Gain_{\max}(A)}{2} \right\},
\end{aligned}
\end{equation}
and
\begin{equation}
\label{gain_max_2}
\Gain_{\max}(A) = 
\begin{cases}
  \Gain' & \text{for }k \leq y_E\\    
  \Gain' - 2(k - y_E)& \text{for } y_E < k < \alpha - 1.
\end{cases}
\end{equation}
\end{enumerate}
\end{claim}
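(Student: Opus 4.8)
The plan is to characterize $R_A$ as a sub-level set of the ``gain'' function and then read off both the region and $\Gain_{\max}(A)$ from a single coordinate computation. I will describe the argument for the case $A \in R_E$ (equations \eqref{r_a} and \eqref{gain_max}); the case $A \in R_F$ is entirely analogous, with the roles of $E$ and $F$, and the relevant orientations, interchanged. First I would reduce to the gain function. Since $A \in R_E$, Remark \ref{rem:figure} gives $R_A \subseteq R_F$, so by Claim \ref{precise_distance} every $Z \in R_A$ satisfies $d_{G'}(Z, A) = ZE + 1 + FA$; conversely, $ZA - ZE - 1 - FA > 0$ already forces $d_{G'}(Z,A) \le ZE + 1 + FA < ZA$, hence $Z \in R_A$. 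Therefore $R_A = \{ Z \in V \mid ZA - ZE - 1 - FA > 0 \}$. Using $\Gain_{\max}(A) = AE - AF - 1$ from Remark \ref{gain_max_remark} (valid because $A\in R_E$), I rewrite $FA + 1 = AE - \Gain_{\max}(A)$ to obtain the convenient form
$$R_A = \{ Z \in V \mid \phi(Z) < \Gain_{\max}(A) \}, \qquad \phi(Z) := AE + ZE - ZA,$$
where $\phi(Z) \ge 0$ by the triangle inequality and $\phi$ measures how far $Z$ lies from the quadrant ``beyond $E$ as seen from $A$''.

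The second step computes $\Gain_{\max}(A)$. With $A = (1,k)$ we have $\Gain_{\max}(A) = (x_E - x_F) + |k - y_E| - |k - y_F| - 1$; since $k > \alpha > y_E$ we get $|k - y_E| = k - y_E$, and splitting on whether $k \ge y_F$ or $\alpha < k < y_F$ immediately yields $\Gain$ and $\Gain - 2(y_F - k)$ respectively, which is precisely \eqref{gain_max}. The third step computes $\phi$ in coordinates and matches the four pieces of \eqref{r_a}. Writing $Z=(x,y)$ and using $x\ge 1$, $k>y_E$,
$$\phi(Z) = (x_E - x) + |x - x_E| + (k - y_E) + |y - y_E| - |y - k|.$$
Splitting into $x \ge x_E$ versus $x < x_E$ and, within each, $y \le y_E$, $y_E < y < k$, $y \ge k$, a short calculation gives $\phi \in \{0,\; 2(y-y_E),\; 2(x_E-x),\; 2[(x_E-x)+(y-y_E)]\}$ on the relevant pieces, and the condition $\phi < \Gain_{\max}(A)$ turns each into exactly one of the four sets listed in \eqref{r_a}.

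The point requiring real care is to confirm that the four listed pieces are \emph{all} of $R_A$ and nothing more. The delicate directions are the ``saturated'' ones with $y \ge k$, where $\phi$ is constant in $y$ at value $2(k-y_E)$; here one must check that this constant value exceeds $\Gain_{\max}(A)$, so that no point with $y \ge k$ enters $R_A$ and the description does not spuriously extend. The gap $2(k-y_E) - \Gain_{\max}(A)$ works out to be (at least) $\Gain' + 2 \ge 4$ by Assumption \ref{assumption:two}, which is exactly where the standing hypothesis $\Gain' \ge 2$ does its work, consistently with the shape of $N$ established in Claim \ref{claim:normal_region_grid}. The case $A \in R_F$ is then handled by the same three steps using $R_A \subseteq R_E$, $d_{G'}(Z,A) = ZF + 1 + EA$, $\Gain_{\max}(A) = AF - AE - 1$, and the sub-level set of $\psi(Z) := AF + ZF - ZA$ centered at $F$; splitting on $k \le y_E$ versus $y_E < k < \alpha-1$ produces \eqref{gain_max_2}, and the analogous coordinate computation produces \eqref{r_a_2}.

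I expect the main obstacle to be purely the bookkeeping: keeping the piecewise-linear evaluation of $\phi$ (and $\psi$) organized across all sign cases, and in particular \emph{verifying} rather than assuming that in every saturated direction the constant value of $\phi$ already exceeds $\Gain_{\max}(A)$. Everything else is a mechanical $L_1$ distance computation; it is this saturation check, powered by $\Gain'\ge 2$, that pins down the exact boundary of $R_A$.
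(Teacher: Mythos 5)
Your proposal is correct and takes essentially the same approach as the paper: your sub-level-set condition $\phi(Z)<\Gain_{\max}(A)$ is exactly the paper's rearranged inequality $TE+AE-\Gain_{\max}(A)<AT$, the derivation of \eqref{gain_max} from Remark \ref{gain_max_remark} is identical, and your sign-case analysis of $\phi$ (including the saturated $y\ge k$ direction, excluded by the same $\Gain'+2>0$ computation) matches the paper's five positional cases one-for-one. If anything, you are slightly more careful than the paper in establishing the two-way equivalence $R_A=\{Z \mid ZE+1+FA<ZA\}$ via Claim \ref{precise_distance} and Remark \ref{rem:figure}, though note the exclusion of $y\ge k$ only needs $\Gain'+2>0$ (automatic since $\Gain'\ge 0$), not the full strength of $\Gain'\ge 2$.
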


By symmetry, the vertices $A=(n,k)$ on boundary QR have a similar expression for their special region, however, we do not include this in the paper in the interest of space.

We will only cover the $A \in R_F$ case; the other case is analogous. We are interested in the nodes $T \in R_A$, i.e., nodes that use edge $e$ to reach $A$. By Remark \ref{gain_max_remark}, vertex $E$ gets the maximum benefit from the extra edge, hence we expect $R_A$ to be a neighbourhood ``centered'' at $E$. However, $R_A$ cannot be a ball centered at $E$, because the directions are not equivalent. For instance, if $T$ is in the rectangle formed by $E$ and $Q$, then we can go to node $E$ for ``free'', without sacrificing any of the gain we get by using the extra edge. This is because the shortest path from $T$ to $A$ in $G$ passed through $E$ anyways, so $\Gain(A,T)=\Gain_{\max}(A)$ (i.e., $T$ also gets maximum benefit). Hence, all nodes in this rectangle will be in $R_A$. For a different example, if $T$ is in the rectangle formed by $E$ and $P$, then going along the $y$ axis towards $E$ is ``free'', but going along the $x$ axis towards is a ``detour'', hence there may be a threshold for $x_T$ below which the shortest path does not use the extra edge. We will make this intuition rigorous below.

\begin{proof}
First, we check that equation \eqref{gain_max} agrees with the definition of $\Gain_{\max}$. By Remark \ref{gain_max_remark}, we have 
\begin{equation}
\label{eq:rem3rewrite}
\Gain_{\max}(A) = AE - (1 + AF),
\end{equation}
which for $k\ge y_F$ implies 
$$\Gain_{\max}(A)=(x_E-1)+(k-y_E)-(1+(x_F-1) + (k-y_F))=y_F-y_E+x_E-x_F -1 = \Gain$$
because of Definition \ref{def:pre_gain}, and for $\alpha<k< y_F$ implies
$$\Gain_{\max}(A)=(x_E-1)+(k-y_E)-(1+(x_F-1)+(y_F-k))=\Gain - 2(y_F-k).$$

Next, we need to find nodes $T$ such that
\begin{equation}
\label{eq:findT}
TE + 1 + FA < AT.
\end{equation}

Combining equations \eqref{eq:rem3rewrite} and \eqref{eq:findT} we get that 
\begin{align}
\label{sp}
    TE + AE - \Gain_{\max}(A) & < AT \nonumber \\
    TE_x + TE_y + AE_x + AE_y - \Gain_{\max}(A) & < AT_x + AT_y,
\end{align}
where $TE_x$ and $TE_y$ denote the distance along the $x$ and $y$ axes, respectively (e.g., $TE_x=|x_T-x_E|$).

There are five cases depending on where $T$ could be:

\textbf{Case 1:} $T$ is in the rectangle formed by nodes $E$ and $Q$
 
 In this case there exists a shortest path in grid from $T$ to $A$ which passes through $E$, and $T$ will certainly use the edge $e$ to reach $A$. Hence, this rectangle belongs to $R_A$, which accounts for the first line of in equation \eqref{r_a}. 
   
 \textbf{Case 2:} $T$ is in the rectangle formed by $E$ and $P$
 
In this case $AE_x = AT_x + TE_x$ and $AE_y = AT_y - TE_y$, which reduces equation \eqref{sp} to 
$$TE_x < \frac{\Gain_{\max}(A)}{2}.$$ This accounts for the second set in the equation \eqref{r_a}.
    
 %Intuitively, if we want to go from $T$ to $A$ in normal grid, we will go left and down. Now to use the edge $e$, we first have to reach $E$ from $T$ and then $A$ form $E$. If we break our journey in x and y directions, we always go down in y-direction so there is no "loss" in y-direction but in x-direction, first we have to go right, to $E$ then again left, back to somewhere vertically below $T$ and then finally to $A$ so the "loss" incurred in x-direction is $2TE_x$. "\Gain" from $E$ to $A$ due to the addition of $e$ is $\Gain_{\max}(A)$ hence $T$ will use $e$ only if "\Gain" is more than the "loss" i.e. $2TE_x < \Gain_{\max}(A)$. 

 \textbf{Case 3:} $T$ is in the rectangle formed by $E$ and $R$, and has $y$-coordinate less than $k$

In this case $AE_x = AT_x - TE_x$ and $AE_y = TE_y + AT_y$, which reduces equation \eqref{sp} to 
    $$TE_y < \frac{\Gain_{\max}(A)}{2}.$$ This accounts for the third set in the \eqref{r_a}.
    
\textbf{Case 4:}  $T$ is in the rectangle formed by $E$ and $S$, and has $y$-coordinate less than $k$

In this case $AE_x = AT_x + TE_x$ and $AE_y = TE_y + AT_y$, which reduces equation \eqref{sp} to 
    $$TE_x + TE_y < \frac{\Gain_{\max}(A)}{2}.$$ This accounts for the fourth set in the \eqref{r_a}.

\textbf{Case 5:}  $T$ has $y$-coordinate greater than or equal to $k$

In this case $TE_y = AE_y + AT_y$, which reduces equation \eqref{sp} to 
    \begin{equation}
    \label{tri_contra}
        2AE_y - \Gain_{\max}(A) < AT_x - (AE_x + TE_x) \le 0,
    \end{equation}
    where the last inequality follows from the triangle inequality.
    However, using equation \eqref{gain_max}, for $k \geq y_F$ we have 
    \begin{equation}
        \begin{aligned}
            2AE_y - \Gain_{\max}(A) & = 2(k - y_E) - \Gain\\
            % & > 2(y_F - y_E) - (y_F - y_E) - (x_E - x_F) + 1\\
            & \geq (y_F - y_E) - (x_E - x_F) + 1\\
            & = \Gain' +2 > 0,
        \end{aligned}
    \end{equation}
    and for $k < y_F$ we have
    \begin{equation}
        \begin{aligned}
            2AE_y - \Gain_{\max}(A) & = 2(k - y_E) - \Gain + 2(y_F-k)\\
            &=(y_F - y_E) - (x_E - x_F) + 1\\
            & = \Gain' +2 > 0,
        \end{aligned}
    \end{equation}
    which contradicts equation \eqref{tri_contra}. Therefore Case 5 is impossible.
  
Since the five cases cover the entire node set, the necessary and sufficient conditions for $T \in R_A$ are characterized, and this completes the proof.
\end{proof}

%   For the remaining section, we assume that $x_E < x_F$, $y_F > y_E$, $x_E - x_F < y_F - y_E$, $\Gain' \geq 2$ and endpoints of the extra edge $e$ i.e. $E$ and $F$ doesn't lie on the boundary of the grid. 

 \subsubsection{Technical lemmas for the proof of Theorem \ref{thm_grid_random}}
\label{sec:key_lemmas}
\begin{lemma}
\label{lem:ge3}
Under Assumptions \ref{assumption:one} and \ref{assumption:two}, the metric dimension of $G'$ is at least 3. 
\end{lemma}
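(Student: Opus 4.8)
The plan is to argue by contradiction: suppose $\beta(G')\le 2$. A single landmark clearly cannot resolve a grid of more than three vertices, so there must be a resolving set $R=\{X,Y\}$ of size exactly two. By Claim~\ref{boundary}, $X$ and $Y$ must lie on \emph{opposite} boundaries of the grid, so there are two geometric configurations to treat: the \emph{vertical} case, with $X$ on $PS$ and $Y$ on $QR$ (left/right), and the \emph{horizontal} case, with $X$ on $PQ$ and $Y$ on $RS$ (top/bottom). Recall from Remark~\ref{assumption:onetwo} and Claim~\ref{claim:normal_region_grid} that under Assumptions \ref{assumption:one} and \ref{assumption:two} the normal region is a single strip, with $R_F$ the component above it (containing $E$) and $R_E$ the component below it (containing $F$), and that the strip never touches the $PQ$ or $RS$ boundaries because $\Gain'\ge 2$. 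In particular, in the horizontal case $X\in R_F$ and $Y\in R_E$ are forced to lie in opposite regions.

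The core tool I would use is a refinement of Remark~\ref{four_points}. Fix an interior vertex $W$ with its four grid-neighbours $W_N,W_S,W_E,W_W$. In the \emph{plain} grid, a direct computation of $\ell^1$ distances shows that a landmark merges exactly the \emph{diagonal} pair of neighbours determined by its quadrant relative to $W$: a landmark to the upper-left or lower-right of $W$ gives $d(\cdot,W_N)=d(\cdot,W_W)$ and $d(\cdot,W_S)=d(\cdot,W_E)$; a landmark to the upper-right or lower-left gives $d(\cdot,W_N)=d(\cdot,W_E)$ and $d(\cdot,W_S)=d(\cdot,W_W)$; and an axis-aligned landmark merges the two neighbours transverse to its direction. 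Consequently, two landmarks fail to resolve the four-neighbourhood of $W$ as soon as they induce the \emph{same} merge. To import this into $G'$, I would reduce each landmark's distances to the four neighbours to those of a single \emph{effective source}: if $W$ and its neighbours lie in the same region as the landmark, Claim~\ref{same_region} gives $d_{G'}=d_G$ and the landmark acts as itself; if instead the whole diamond lies in the landmark's special region and uses the extra edge, Claim~\ref{precise_distance} gives $d_{G'}(Y,W_i)=YF+1+E\,W_i$ (resp. with $F$), so the landmark acts as the \emph{endpoint} $E$ (resp. $F$) up to a constant offset. Either way the plain-grid quadrant computation applies to the effective sources.

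Carrying this out, the easy situation is when $X$ and $Y$ lie in the same region (possible only in the vertical case): I would place $W$ deep inside that region, so both effective sources are the landmarks themselves, and choose the row of $W$ so that the two landmarks fall into the same diagonal class (or, if $X,Y$ share a row, put $W$ on that row and use the axis-aligned merge); this yields an unresolved pair among $\{W_N,W_S,W_E,W_W\}$. The genuinely hard configuration is when the two landmarks lie in opposite regions, which is forced in the horizontal case and also occurs in part of the vertical case. Here one effective source is $E$ or $F$ exactly when the diamond sits inside the far landmark's special region $R_Y$, and is the landmark itself otherwise; the shapes given by Claims~\ref{claim:normal_region_grid} and~\ref{special_region} are needed to place $W$ so that the two effective sources share a diagonal class in at least one of the two regimes, while also respecting the corner-adjacency restriction of Claim~\ref{boundary}. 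I expect the main obstacle to be precisely this opposite-region analysis: controlling the transition between ``the edge is used'' and ``the edge is not used'' for the far landmark as $W$ moves relative to $R_Y$, and checking that in every placement (including the corner sub-cases) some diamond remains unresolved. This bookkeeping, rather than any single inequality, is the crux, which is why the lower bound is delicate even though the target value is only $3$.
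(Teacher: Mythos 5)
Your proposal is a strategy outline, not a proof: the case you yourself identify as the crux --- two landmarks in opposite regions, which is forced in the horizontal configuration and occurs in the vertical one --- is explicitly deferred ("I expect the main obstacle to be precisely this opposite-region analysis"), and that case is essentially the entire content of the lemma. Moreover, it is not mere bookkeeping. The hardest configurations are those where both landmarks sit right next to the normal strip, e.g.\ $X=(1,\floor{\alpha}-1)$ and $Y=(n,\floor{\beta}+1)$ with the extra edge nearly vertical ($x_E-x_F=1$). There the special regions $R_X\subseteq R_E$ and $R_Y\subseteq R_F$ degenerate to single rectangles anchored at $F$ and $E$, the strips around $\alpha$ and $\beta$ are only one or two rows apart, and any diamond whose center lies in a row where the two quadrant merges could coincide straddles the strip and clips $R_X$ or $R_Y$; your "effective source" reduction then breaks, because some of the five vertices use the extra edge to reach a landmark and others do not. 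Your framework gives no mechanism for handling this transition, which is exactly what you concede.

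The paper closes this gap with two ideas that are absent from your plan. First, it applies the four-neighbour observation (Remark \ref{four_points}) not to an arbitrary vertex $W$ but to the neighbours of $E$ and $F$: if $\Gain_{\max}(X)>1$ then all four neighbours of $F$ (or $E$) are equidistant from $X$, so the other landmark alone would have to resolve them --- impossible. Hence $\Gain_{\max}(X),\Gain_{\max}(Y)\le 1$, which collapses the landmark candidates to at most three explicit positions per boundary. Second, it exploits parity (Claim \ref{parity}): when $\Gain'$ is even, $\Gain_{\max}\le 1$ forces $\Gain_{\max}=0$, so both landmarks are normal vertices, all their distances are as in the plain grid, and only adjacent corners resolve a grid --- contradiction; when $\Gain'$ is odd, the few remaining candidates are handled by a sub-grid argument (the rectangle $H$ between the two strips has unchanged distances to all candidates, and the candidates are non-adjacent corners of $H$, so they cannot resolve it). Without the $\Gain_{\max}\le1$ bound and the parity step, you have no control over which opposite-region landmark pairs can occur, and your quadrant-merge analysis has no way to dispose of the near-strip configurations. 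To salvage your approach you would need to prove something equivalent to these two steps, at which point you would have reproduced the paper's proof.
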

\begin{proof}
Suppose that there exists two points $X$ and $Y$ that distinguish all points in the grid. By Claim \ref{boundary}, they have to be on opposite boundaries. Next, their maximum gains cannot exceed 1. Indeed, suppose for contradiction that $\Gain_{\max}(X) > 1$, and that $X\in R_F$. Then, by Remark \ref{gain_max_remark} we have $XF-(1+XE)>1$, and thus the four neighboring vertices of $F$ will all have have distance $\min(XF \pm 1,XE+2)=XE+2$ to $X$. By Remark \ref{four_points}, the four neighboring vertices of $F$ cannot be distinguished by a single vertex $Y$, which contradicts our assumption that $\{X,Y\}$ is a resolving set, and hence we must have $\Gain_{\max}(X) \le 1$. By a symmetric argument, we also have $\Gain_{\max}(Y) \le 1$

We have two cases depending on the parity of $\Gain'$.

 \textbf{Case 1:} $\Gain'$ is even. 

    By Claim \ref{parity}, we know that $\Gain_{\max}(X)$ is also even, and since $\Gain_{\max}(X) \le 1$, it must equal to 0, which in turn implies that $X$ is a normal vertex. By a symmetric argument, $Y$ must be normal vertex too. Moreover, recall that $X$ and $Y$ must lie on opposite boundaries. Therefore, because of Claim \ref{claim:normal_region_grid}, $X$ is either $(1, \alpha - 1)$ or $(1, \alpha)$ and $Y$ is either $(n, \beta - 1)$ or $(n, \beta)$, as these are the only normal vertices on the boundaries of $G'$. As $X$ and $Y$ are normal vertices, edge $e$ has no effect on the distances from any vertex of $G'$ to $X$ and $Y$, which therefore remain the same as in the original grid $G$. But we know that the only resolving sets of the grid $G$ that have cardinality 2 are two adjacent corners of $G$, which disqualifies $X$ and $Y$ from being a resolving set of $G$ and thus $G'$.

 \textbf{Case 2:} $\Gain'$ is odd. 
 
Recall, that $\Gain_{\max}(X)\le 1$, $\Gain_{\max}(Y) \le 1$, and both $X$ and $Y$ must lie on the boundary of $G'$. Let us first rule out the possibility of $X$ or $Y$ being on the top/bottom boundaries $PQ$ and $RS$. More specifically, we will show that there is no point $X = (k, 1)$ with $\Gain_{\max}(X) \le 1$. If $X = (k, 1)$, then $X \in R_F$ and
        \begin{equation}
            \begin{aligned}
            \Gain_{\max}(X) & = XF - XE - 1 \\
            & = |k - x_F| + y_F-1 - |k - x_E|  - (y_E - 1) \\
            & = (|k - x_F| - |k - x_E|) + y_F - y_E - 1 \\
            & \geq - (x_E - x_F) + y_F - y_E - 1\\
            & = \Gain',\\
            \end{aligned}
        \end{equation}
where the inequality follows from the triangle inequality.  Now, Assumption \ref{assumption:two} states that $\Gain' \ge 2$, and thus no point $X = (k, 1)$ can have $\Gain_{\max}(X) \le 1$ .      

Now we consider the case when $X$ and $Y$ lie on $PS$ and $QR$, respectively. We will check which vertices $X=(1,k)$ and $Y=(n,k)$ have $\Gain_{\max} \le 1$. The $\Gain_{\max}$ of vertices $X=(1,k)$ is  expressed in equation \eqref{gain_max}. Since $\Gain>\Gain'\ge 2$, only the $\Gain-2(y_F-k)$ term can equal 1. The term $\Gain-2(y_F-k)$ is an increasing linear function of $k$ that takes the value 1 for only a single value of $k$, namely $k=\alpha+1/2 = \floor{\alpha} + 1$. Similarly, in \eqref{gain_max_2}, the only value that satisfies $\Gain'-2(k-y_E)=1$ is $k=\floor{\alpha} - 1$. Consequently, the only vertices on $PS$ that have $\Gain_{\max}(X) = 1$ are $X_1=(1, \floor{\alpha} - 1)$ and $X_3=(1, \floor{\alpha} + 1)$. Similarly, the only vertices on $QR$ that have $\Gain_{\max}(X) = 1$ are $Y_1=(n, \floor{\beta} - 1)$ and $Y_3=(n, \floor{\beta} + 1)$. The only vertices on $PS$ and $QR$ that have $\Gain_{\max}(X) = 0$ are the normal vertices $X_2 = (1, \floor{\alpha})$ and $Y_2 = (n, \floor{\beta})$. Hence, we have
 $$X \in \{X_1=(1, \floor{\alpha} - 1), X_2=(1, \floor{\alpha}),  X_3=(1, \floor{\alpha} + 1) \},$$
and,
$$Y \in \{Y_1=(n, \floor{\beta} - 1),Y_2=(n, \floor{\beta}),  Y_3=(n, \floor{\beta} + 1)\}.$$
To finish the proof, we are going to rule out the remaining nine resolving sets that can be formed by $X_1, X_2, X_3$ and $Y_1, Y_2, Y_3$. Since $\Gain_{\max}(X_1)=\Gain_{\max}(X_3)=1$, the expression for the special regions of $X_1$ and $X_3$ in Claim~\ref{special_region} simplifies to 
\begin{equation}
\label{eq:R_X_1}
   R_{X_1}=\{(x, y) \mid x_F \leq x , y_F \leq y \}
\end{equation}
and 
\begin{equation}
\label{eq:R_X_3}
    R_{X_3}=\{(x, y) \mid x_E \leq x , y \leq y_E \}.
\end{equation}
By a symmetric argument,
\begin{equation}
\label{eq:R_Y_1}
    R_{Y_1}=\{(x, y) \mid x \leq x_F , y_F \leq y \}
\end{equation}
and 
\begin{equation}
\label{eq:R_Y_3}
    R_{Y_3}=\{(x, y) \mid x \leq x_E , y \leq y_E \}.
\end{equation}

    \begin{figure}
        \centering
        \includegraphics[scale = 0.55]{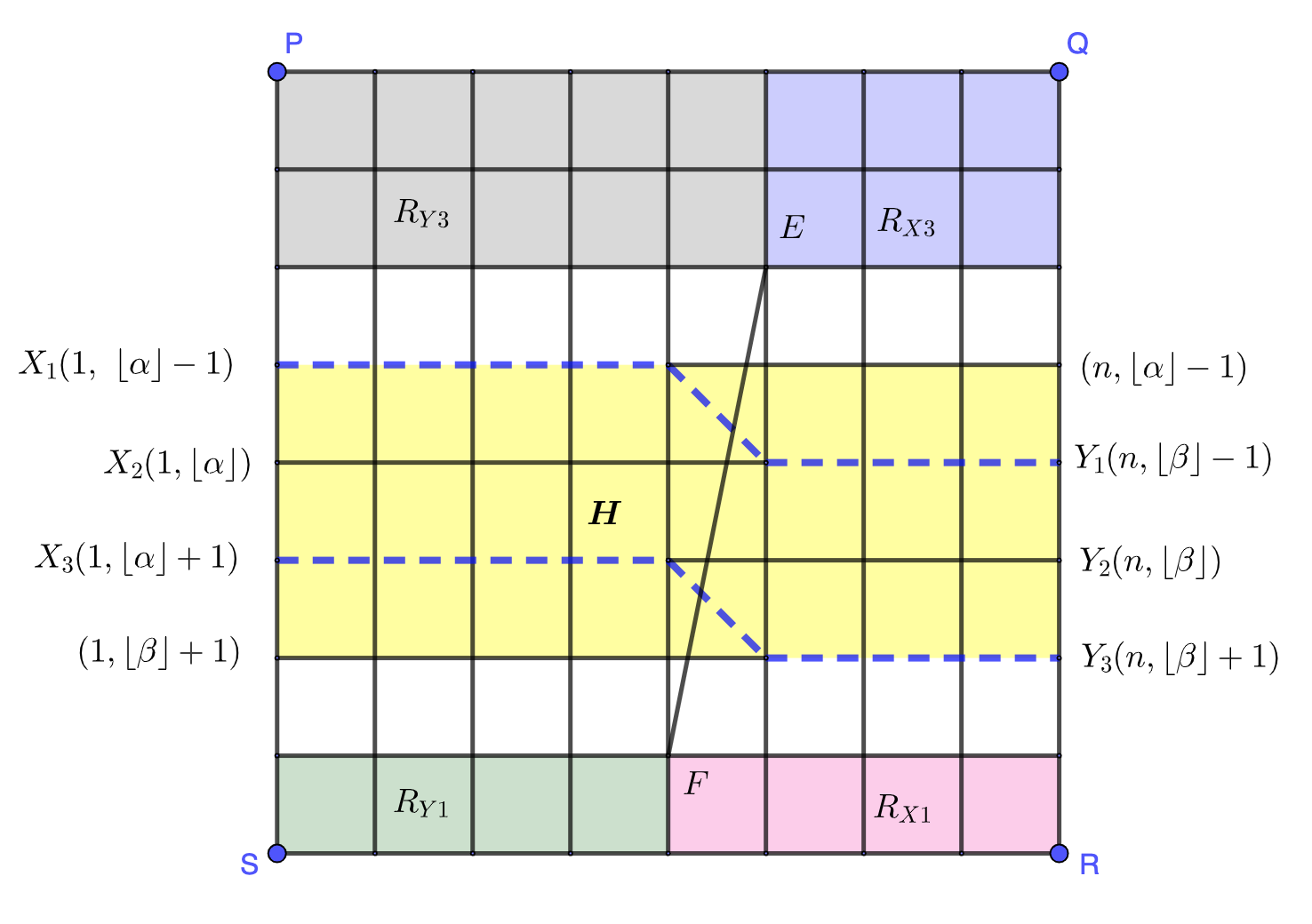}
        \caption{Figure for the proof of Lemma \ref{lem:ge3} for the case when $\Gain'$ is odd. Note that sub-grid $H$ doesn't intersect with any $R_{Y_i}$ or any $R_{X_i}$ for $i = 1, 3$. }
        \label{fig:lemma_3}
    \end{figure}

Consider the rectangular sub-grid $H$ formed by points $X_1$, $(n, \floor{\alpha} - 1)$, $Y_3$, $(1, \floor{\beta} + 1)$ (see Figure \ref{fig:lemma_3}). The sub-grid $H$ cannot intersect the special regions of $X_1, X_3, Y_1$ and $Y_3$ because (i) by equations \eqref{eq:R_X_1} and \eqref{eq:R_Y_1}, the special regions of $X_1$ and $Y_1$ have $y$-coordinate at least $y_F$, (ii) by equations \eqref{eq:R_X_3} and \eqref{eq:R_Y_3}, the special regions of $X_3$ and $Y_3$ have $y$-coordinate at most $y_E$, and (iii)
the sub-grid $H$ has $y$-coordinates more than $y_E$ and less than $y_F$. The statement (iii) follows by equation \eqref{eq:alpha_gains} and the inequality $\Gain'\ge 2$ from Assumption \ref{assumption:two}, since the lowest $y$-coordinate value of a vertex in $H$ is
\begin{equation*}
    \floor{\alpha} - 1 = \left\lfloor y_E+\frac{\Gain'+ 2}{2} \right\rfloor -1 \ge y_E+\frac{\Gain'+ 1}{2} -1 > y_E,
\end{equation*}
and by equation \eqref{eq:beta_gains} and the inequality $\Gain'\ge 3$ (which follows from the assumption that $\Gain'$ is odd in addition to Assumption \ref{assumption:two}), since the highest $y$-coordinate value of a vertex in $H$ is
\begin{equation}
\label{beta_yf}
    \floor{\beta} + 1 = \left\lfloor y_F-\frac{\Gain'}{2}\right\rfloor -1 \le y_F- \frac{\Gain'}{2} - 1 < y_F.
\end{equation}
Consequently, the distances in graph $G'$ between any point in $H$ and $X$ or $Y$ are same as in $G$.
        % so we need $X$ and $Y$ to be adjacent corners to distinguish all pairs of points in this grid. But as $\floor{\alpha} < \floor{\beta}$, this is not possible. (\textbf{Add figure}).  
  By Remark \ref{rm:alpha_beta}, we also have  $\floor{\alpha} < \floor{\beta}$, which implies that $X$ and $Y$ cannot be adjacent corners of $H$, and they cannot resolve the sub-grid~$H$. 
  
 Since we ruled out every pair of vertices $X,Y$ for being a resolving set, the proof is concluded.
\end{proof}

\begin{lemma}
\label{lem:gain_odd}
Under Assumptions \ref{assumption:one} and \ref{assumption:two}, if $\Gain'$ is odd, the set $\{X = (1, \floor{\beta}), Y = (n, \floor{\beta}), Q = (n, 1) \}$ is a resolving set in $G'$.
\end{lemma}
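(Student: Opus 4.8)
The plan is to verify directly that $\{X,Y,Q\}$ resolves $G'$: assuming some pair $A\neq B$ is not distinguished by these three landmarks, I derive a contradiction. The first step is to record how each landmark ``sees'' the graph. The vertex $Y=(n,\floor{\beta})$ lies on the normal strip of Claim \ref{claim:normal_region_grid} (it is precisely the vertex $Y_2$ from Lemma \ref{lem:ge3}), so $Y\in N$ and $d_{G'}(A,Y)=AY=(n-x_A)+|y_A-\floor{\beta}|$ for every $A$. On the other hand $X=(1,\floor{\beta})\in R_E$ and $Q=(n,1)\in R_F$, so by Remark \ref{rem:figure} their special regions satisfy $R_X\subseteq R_F$ and $R_Q\subseteq R_E$. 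Hence $d_{G'}(A,X)=AX-\Gain(A,X)$ and $d_{G'}(A,Q)=AQ-\Gain(A,Q)$, where each gain is $0$ (when $A$ lies outside the relevant special region) or, by Claim \ref{parity} together with the oddness of $\Gain'$, a positive \emph{odd} integer.

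The key reduction comes from combining the $X$- and $Y$-equations. Writing $\delta=x_A-x_B$, the assumption $AY=BY$ gives $|y_A-\floor{\beta}|-|y_B-\floor{\beta}|=\delta$. Substituting this into $d_{G'}(A,X)=d_{G'}(B,X)$ (the terms $|y_A-\floor{\beta}|$ and $|y_B-\floor{\beta}|$ enter $AX$ and $AY$ identically because $X,Y$ share the row $\floor{\beta}$) yields
$$2\delta=\Gain(A,X)-\Gain(B,X).$$
The left-hand side is even while each gain is $0$ or odd, so the two gains have equal parity. Thus either both vanish, which forces $\delta=0$ and $|y_A-\floor{\beta}|=|y_B-\floor{\beta}|$, i.e.\ $A,B$ are mirror images across row $\floor{\beta}$; or both are odd, in which case $A,B\in R_X\subseteq R_F$. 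The remaining parity pattern (exactly one gain nonzero) is immediately contradictory, so these two survivors are all I must rule out.

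For the survivor $A,B\in R_X$, both points sit strictly above the normal strip, hence $y_A,y_B<\floor{\beta}$, and since $R_F\cap R_Q=\emptyset$ their distances to $Q$ are unchanged. As $Y$ and $Q$ both lie on column $n$, at rows $\floor{\beta}$ and $1$, the pair $\big(d_{G'}(\cdot,Y),d_{G'}(\cdot,Q)\big)=(\,\cdot Y,\ \cdot Q)$ recovers both coordinates on the half-plane $\{y<\floor{\beta}\}$ — the sum gives the column, the difference the row — so $A=B$, a contradiction. For the mirror survivor $A=(x,\floor{\beta}+t)$, $B=(x,\floor{\beta}-t)$ with $t\ge1$, I use $Q$, noting $AQ-BQ=2t$. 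If neither point is in $R_Q$ the distances to $Q$ differ by $2t\neq0$; if exactly one is in $R_Q$ the difference is $2t$ minus an odd gain, hence odd and nonzero. The only delicate subcase is both $A,B\in R_Q$, where Claim \ref{precise_distance} gives $d_{G'}(A,Q)=AF+1+EQ$ and $d_{G'}(B,Q)=BF+1+EQ$; the common segment $1+EQ$ cancels and $d_{G'}(A,Q)-d_{G'}(B,Q)=AF-BF=-2\min\!\big(t,\ y_F-\floor{\beta}\big)\neq0$.

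I expect the main obstacle to be bookkeeping: writing the three distance functions correctly and propagating the parity of the gains through the algebra. The decisive leverage is Claim \ref{parity}: since $\Gain'$ is odd, every positive gain is odd, which collapses what would otherwise be a large region-by-region split into just the two survivors above. The one genuinely worrying subcase — a mirror pair with both endpoints in $R_Q$ — dissolves because the two shortest paths through $e$ end in the identical segment $1+EQ$, leaving only $AF-BF$ to sign. Throughout I rely on $\Gain'$ being odd so that $\alpha,\beta$ are half-integers: this makes the normal strip have height one and gives $\floor{\beta}>\floor{\alpha}$ (placing $X$ in $R_E$ and $Y$ in $N$) and $\floor{\beta}<y_F$, the facts used above.
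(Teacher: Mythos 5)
Your proof is correct and follows essentially the same route as the paper's: your parity identity $2(x_A-x_B)=\Gain(A,X)-\Gain(B,X)$ combined with Claim \ref{parity} reproduces the paper's Case 1, your mirror-pair analysis through $Q$ (sub-cases on $R_Q$ membership, including the $AF-BF$ computation via Claim \ref{precise_distance}) is its Case 2, and your half-plane argument using $Y$ and $Q$ as adjacent corners with unchanged distances is its Case 3. The only cosmetic difference is that you compute $AQ-BQ=2t$ and $AF-BF=-2\min(t,\,y_F-\floor{\beta})$ explicitly where the paper argues via symmetry about the line through $X$ and $Y$ and the fact that $Q$ and $F$ lie off that line.
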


\begin{proof}
\begin{figure}[!ht]
    \centering
    \includegraphics[scale=0.5]{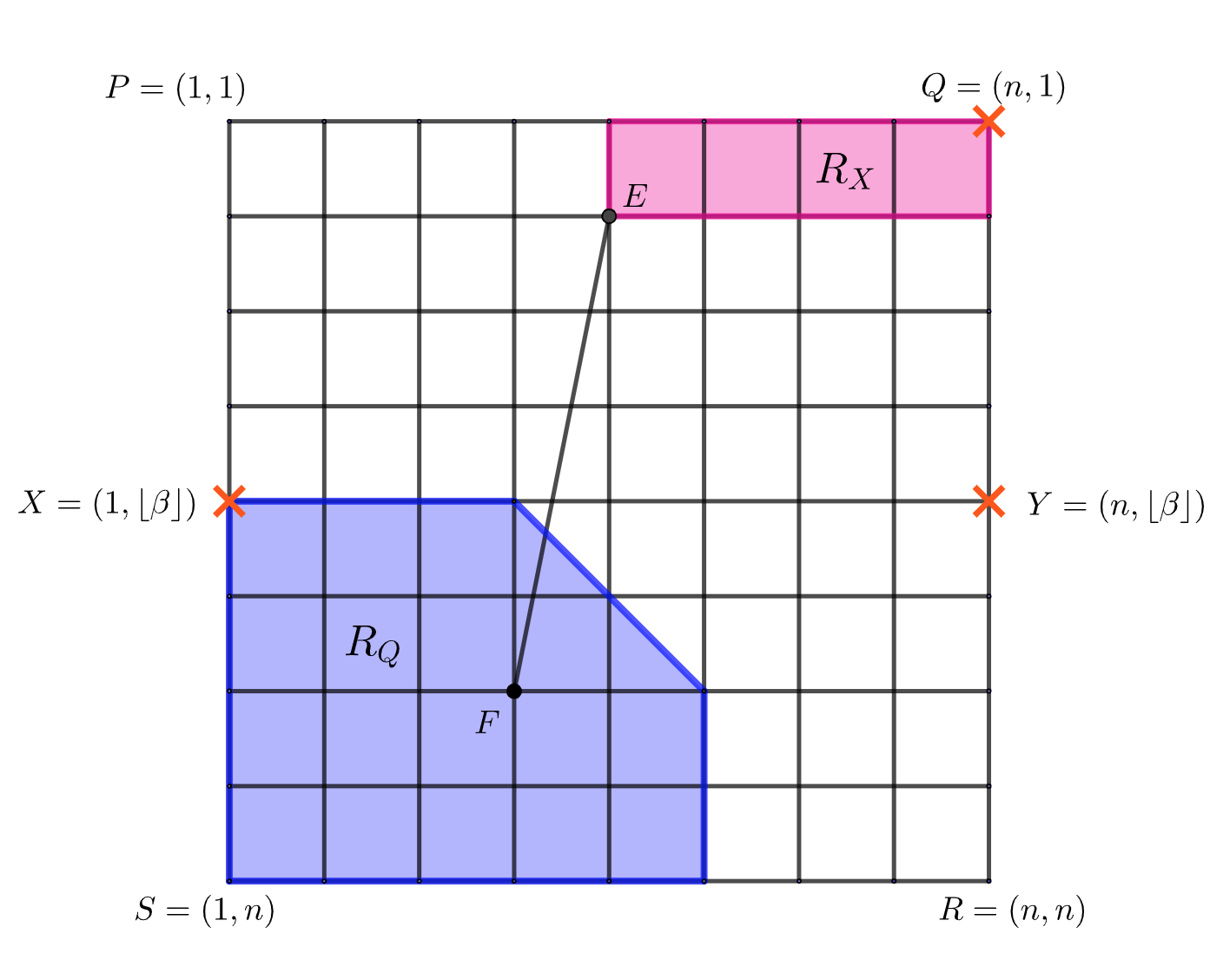}
    % \caption{Special regions of points $X$ and $Q$ when $\mathrm{Gain'}$ is odd.}
    \caption{This is the illustration for the proof of Lemma \ref{lem:gain_odd}. Points $X$, $Y$, $Q$ marked with red cross form a resolving set. $Y$ is a normal point. Blue and pink regions(boundaries included) are special regions of $Q$ and $X$, respectively.}
    \label{fig:odd_3}
\end{figure}

By Claim \ref{claim:normal_region_grid}, $Y$ is a normal vertex. The only normal vertex on boundary PS is vertex $(1,\floor{\alpha})$, and since by Remark \ref{rm:alpha_beta} we have $\floor{\beta} > \floor{\alpha}$, $X$ cannot be a normal vertex. By Claim \ref{rem:special_region} we have $X \in R_E$, and by Remark \ref{rem:figure}, vertex $X$ has non-empty special region $R_X \subseteq R_F$ (see the pink region in Figure~\ref{fig:odd_3}).

Suppose for contradiction that there exist two distinct points $A$ and $B$, which are not distinguished by the three points $X, Y, Q$ in $G'$. We separate three cases depending on the position of $A$ and $B$:

\textbf{Case 1:} One of $A$ and $B$ is in $R_X$, and the other is in $N_X$

    Without loss of generality, we assume $A\in R_X$ and $B \in N_X$.
    
    Since $Y$ is a normal point and it does not distinguish $A= (x_A, y_A)$ and $B = (x_B, y_B)$, we have $AY=BY$, which can be expanded as
    \begin{align*}
     n - x_A + |y_A - \floor{\beta}|  &= n - x_B + |y_B - \floor{\beta}|,
    \end{align*}
    whence
    \begin{align}
    \label{eq:AYBY}
     |y_A - \floor{\beta}| - |y_B - \floor{\beta}| &= x_A -x_B. 
    \end{align}
    
    By the assumption that $A$ and $B$ are not distinguished by $X$, we have that $d_{G'}(A, X) = d_{G'}(B, X)$. Since $A\in R_X$ and $B \in N_X$, this yields that
    \begin{align}
    \label{odd_even}
    AX - \Gain(A, X) = BX.
    \end{align}
    Therefore,
    \begin{align}
    \label{odd_even_2}
    \Gain(A, X) & = AX - BX \nonumber \\
     & = (x_A - 1) + |y_A - \floor{\beta}| - (x_B - 1) - |y_B - \floor{\beta}| \nonumber \\
    %  \Rightarrow \quad
    &   = 2(x_A-x_B),
    \end{align}
    where the last line follows form equation \eqref{eq:AYBY}. Equation \eqref{odd_even_2} implies that $\Gain(A, X)$ must be even. By Claim \ref{parity}, if $\Gain(A, X)$ is even then $\Gain'$ must be even too, which contradicts our assumption that $\Gain'$ is odd.
    
  \textbf{Case 2:} $A,B \in N_X$
  
    In this case, the distances between $X,Y$ and $A,B$ are the same in graph $G'$ as in $G$, which implies that $A,B$ are not distinguished by $X$ nor $Y$ in $G$. The only pairs of vertices that are not distinguished by $X,Y$ in the grid $G$ are vertices that are symmetric to the horizontal line passing through $X$ and $Y$. Therefore $A,B$ must be such a pair. By a similar parity based argument as in Case 1, if one of $A$ and $B$ is in $R_Q$ and the other is not, then they are distinguished by either $Y$ or $Q$. Indeed, substituting $Q$ instead of $X$ into equations \eqref{odd_even} and \eqref{eq:AYBY}, we get
    \begin{align}
        \Gain(A,Q) &\stackrel{\eqref{odd_even}}{=} AQ-BQ \nonumber \\
        &= n-x_A+y_A-1-(n-x_B)-(y_B-1) \nonumber \\
        \label{odd_even_Q}
        & \stackrel{\eqref{eq:AYBY}}{=} |y_B - \floor{\beta}| - |y_A - \floor{\beta}| + y_A - y_B \nonumber \\
        & \equiv 0 \pmod 2.
    \end{align} 
    Then, $\Gain'$ should also be even by Claim \ref{parity}, contradicting our assumption that $\Gain'$ is odd.

    We are left with the cases $A,B \in N_Q$ and $A,B \in R_Q$. Notice that since we showed $\floor{\beta} +1<y_F$ for odd $\Gain'$ in equation \eqref{beta_yf}, and since by equation \eqref{eq:beta_gains} we have $\floor{\beta}=\floor{y_E+(\Gain+2)/2}>1$, neither $F$ nor $Q$ are on the horizontal line through $X$ and $Y$. Hence, any pair of nodes $A,B$ that are symmetric to the $XY$ line are distinguished by both $Q$ and $F$ in graph $G$. We immediately see that if $A,B \in N_Q$, the pair $A,B$ is also by $Q$ in $G'$. If $A,B \in R_Q$, by Claim \ref{precise_distance} together with $Q \in R_F$, and since $F$ distinguishes $A,B$ in $G$, we have
    $$d_{G'}(Q,A)=QE+1+FA\ne QE+1+FB = d_{G'}(Q,B).$$
    Hence, in every sub-case of Case 2 we showed that $A,B$ must be distinguished by at least one of $Q,X$ and $Y$ in $G'$.

\textbf{Case 3:} $A,B \in R_X$:

 By Claim \ref{special_region}, we have $Q\in R_X$. The anti-transitivity property of special regions (Remark \ref{relation_classification}) implies that if $A \in R_X$ and $Q \in R_X$, then $A\not \in R_Q$ and therefore $A \in N_Q$. Similarly, we have $B\in N_Q$, and we can deduce that the distances between $Q$ and $A,B$ are the same in graph $G'$ as in graph $G$. Moreover, since $Y$ is a normal vertex, the distances distances between $Y$ and $A,B$ are the same in $G'$ as in $G$ too.
 
 Remark \ref{rem:figure} together with $X \in R_E$ implies that we have $R_X \subseteq R_F$, and Remark \ref{rem:special_region} and Claim \ref{claim:normal_region_grid} together imply that every vertex in $R_F$ has $y$-coordinate at most $\beta -1 < \floor{\beta}$. Hence, both $A$ and $B$ are contained in the rectangular sub-grid with corners $QYXP$. Since $Q$ and $Y$ are adjacent corners of the sub-grid $QYXP$, they must resolve the entire sub-grid $QYXP$ in graph $G$, including vertices $A$ and $B$. Since distances from $Y$ and $Q$ to $A,B$ are the same in graph $G'$ as in $G$, vertices $Q$ and $Y$ must distinguish $A$ and $B$ in $G'$ as well.

Thus, every vertex pair $A,B$ is distinguished by some vertex in the set $\{ X,Y,Q \}$, and the proof is concluded.
\end{proof}
\begin{lemma}
\label{lem:gain_even}
Under Assumptions \ref{assumption:one} and \ref{assumption:two}, if $\Gain'$ is even and $x_E - x_F < \frac{\Gain'}{2} + 2$, the set $\{X = (1, \beta - 1), Y = (n, \beta - 1), Z = (1, \alpha - 1)\}$ is a resolving set in $G'$. 
\end{lemma}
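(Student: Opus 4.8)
The plan is to show that no two distinct vertices $A\ne B$ can share a distance vector to $\{X,Y,Z\}$ in $G'$. First I would pin down the three landmarks. By Claim \ref{claim:normal_region_grid}, both $Y=(n,\beta-1)$ and $Z=(1,\alpha-1)$ are normal, so $d_{G'}(\cdot,Y)=(\cdot)Y$ and $d_{G'}(\cdot,Z)=(\cdot)Z$ are unaffected by $e$. The point $X=(1,\beta-1)$ sits on $PS$ at height $\beta-1\ge\alpha$, so it is normal when $x_E-x_F=1$ and lies in $R_E$ otherwise; in the latter case equation \eqref{gain_max} (with $\alpha<\beta-1<y_F$) gives $\Gain_{\max}(X)=\Gain-2(y_F-(\beta-1))=2(x_E-x_F)-2$. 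Substituting this into Claim \ref{special_region} shows that every vertex of $R_X$ has $y$-coordinate at most $y_E+(x_E-x_F)-2$. This is where the hypothesis is used: since $\alpha-1=y_E+\Gain'/2$, the assumption $x_E-x_F<\Gain'/2+2$ is exactly $(x_E-x_F)-2<\Gain'/2$, so every vertex of $R_X$ lies strictly above row $\alpha-1$, hence strictly above the rows of both $Z$ and $Y$.

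Before the case analysis I would verify that $\{X,Y,Z\}$ already resolves the plain grid $G$. Because $X$ and $Y$ share row $\beta-1$ at the two extreme columns, the pair $(AX,AY)$ recovers both $x_A$ and $|y_A-(\beta-1)|$, so the only pairs $X,Y$ fail to separate in $G$ are those symmetric about row $\beta-1$; a one-line computation shows $Z=(1,\alpha-1)$, being off that row, separates every such symmetric pair. Thus $\{X,Y,Z\}$ is resolving for $G$.

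Now assume for contradiction that $A\ne B$ are not distinguished in $G'$, and split on membership in $R_X$. \textbf{(a)} If $A,B\in N_X$, all three distances agree with those in $G$, contradicting the previous paragraph. \textbf{(b1)} If exactly one of them, say $A$, is in $R_X$: normality of $Y$ gives $|y_A-(\beta-1)|-|y_B-(\beta-1)|=x_A-x_B$, and combining with $d_{G'}(A,X)=d_{G'}(B,X)=BX$ forces $\Gain(A,X)=AX-BX=2(x_A-x_B)$, so $x_A>x_B$ and, via $\Gain(A,X)\le\Gain_{\max}(X)$, the bound $x_A-x_B\le x_E-x_F-1$. Using that $y_A<\alpha-1<\beta-1$, the $Y$- and $Z$-equations reduce to $|y_B-(\beta-1)|=(\beta-1-y_A)-(x_A-x_B)$ and $|y_B-(\alpha-1)|=(\alpha-1-y_A)+(x_A-x_B)$; matching the (at most two) solutions of each for $y_B$ leaves only $x_A=x_B$, $y_A=\alpha-1$, $y_A=\beta-1$, or $x_A-x_B=\beta-\alpha=x_E-x_F$, and all four are excluded by $x_A>x_B$, $y_A<\alpha-1$, and $x_A-x_B\le x_E-x_F-1$. \textbf{(b2)} If $A,B\in R_X$, both lie strictly above the rows of $Y$ and $Z$, which therefore see them ``from below''; then $(AZ,AY)$ already determines $(x_A,y_A)$ uniquely, forcing $A=B$.

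I expect the middle sub-case \textbf{(b1)} to be the main obstacle. It is exactly here that a parity argument, as in Lemma \ref{lem:gain_odd}, fails: the relation $\Gain(A,X)=2(x_A-x_B)$ is perfectly consistent with $\Gain'$ even, so the contradiction must instead come from the precise octagonal geometry of $R_X$ in Claim \ref{special_region}. The quantitative hypothesis $x_E-x_F<\Gain'/2+2$ is used sharply to guarantee $y_A<\alpha-1$; were it dropped, the ``$y_A=\alpha-1$'' solution would survive and such a pair $A,B$ need not be resolved, which matches the paper's expectation that the metric dimension rises to $4$ in the excluded regime.
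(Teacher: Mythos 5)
Your proof is correct and takes essentially the same approach as the paper's: the same identification of $Y,Z$ as normal vertices, the same computation $\Gain_{\max}(X)=2(x_E-x_F)-2$, the same use of the hypothesis $x_E-x_F<\Gain'/2+2$ to confine $R_X$ strictly below row $\alpha-1$, and the same three-way case split on membership in $R_X$, with your sub-case (b1) merely merging the paper's two sub-cases of its Case 3 into a single sign analysis that reaches the identical contradictions ($y_A=\alpha-1$, $x_A=x_B$, and $\Gain(A,X)=2(x_E-x_F)>\Gain_{\max}(X)$). The only substantive addition is your explicit handling of the degenerate case $x_E-x_F=1$, where $X=(1,\alpha)$ is in fact normal rather than in $R_E$ as the paper asserts; this is a minor oversight in the paper (its formula $\Gain_{\max}(X)=0$ remains consistent, making $R_X=\emptyset$ and the later cases vacuous), and your version closes it cleanly.
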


\begin{proof}[Proof of Lemma \ref{lem:gain_even}]
\begin{figure}[!ht]
    \centering
    \includegraphics[scale=0.4]{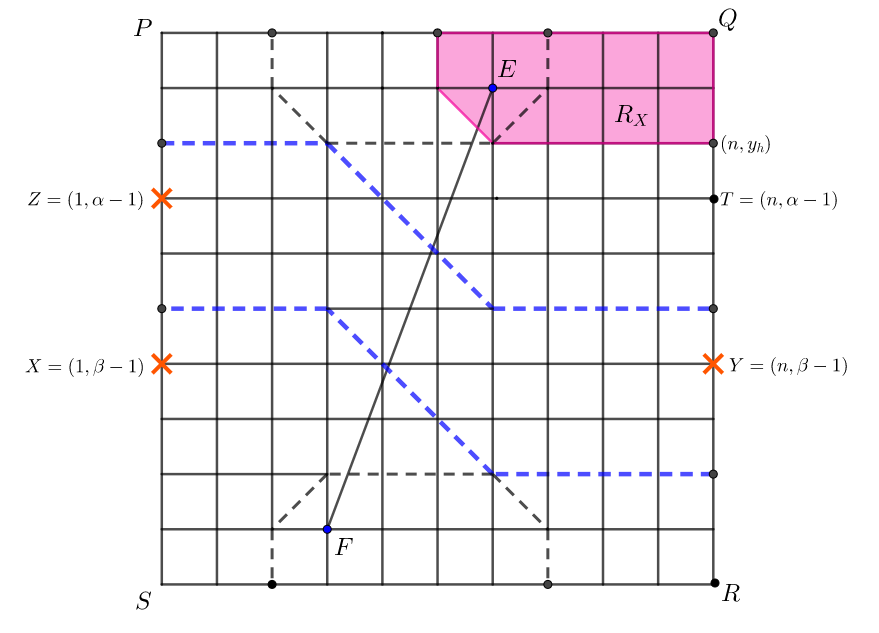}
    % \caption{special region of $X$ when $\Gain'$ is even. 
    % % \textbf{TODO:  }$(n, \alpha - 1)$ is $T$ 
    % }
    \caption{Illustration for the proof of Lemma \ref{lem:gain_even}. Points $X$, $Y$, $Z$ marked with red crosses form a resolving set. $Y$ and $Z$ are normal points, the pink region(including the boundary) is $R_X$.}
    \label{fig:even_3}
\end{figure}

See Figure \ref{fig:even_3} for an illustration. Note that $Y$ and $Z$ are normal points, and that $X \in R_E$. First we calculate $\Gain_{\max}(X)$. By equation \eqref{gain_max}, since $\beta-1 \le y_F$ because of equation \eqref{eq:beta_gains},
\begin{equation}
    \label{lem5_gain_max}
    \begin{aligned}
        \Gain_{\max}(X) & = \Gain - 2(y_F - \beta + 1) \\
        & = 2(x_E - x_F) - 2.
    \end{aligned}
\end{equation}
 Now we show that $R_X$ completely lies inside the rectangle $PQTZ$. Indeed, according to Claim \ref{special_region}, the largest y-coordinate of a point in special region of $X$ will be 
\begin{align}
    y_{\max} & = y_E + \frac{\Gain_{\max}(X)}{2} - 1\nonumber \\
    % & = y_E + \frac{2\beta - y_F - y_E + x_E - x_F - 1}{2} - 1 \\
    % & = y_E + \frac{\Gain' - 2(y_F - \beta + 1)}{2} - 1 \text{   (As  $\beta < y_F$ )}\\
    % &= y_E + x_E - x_F - 2 \\
    & \stackrel{\eqref{lem5_gain_max}}{=}y_E+ \frac{2(x_E - x_F)-2}{2} - 1\nonumber\\
    & = \frac{1+y_F+y_E+x_F-x_E}{2} - 1 - \frac{(y_F-y_E) - (x_E-x_F) - 1}{2} + (x_E - x_F)-2\nonumber\\
    % & = y_E+ \frac{1+y_F-y_E+x_F-x_E}{2}  - \frac{1+y_F-y_E+x_F-x_E}{2} + \frac{2(x_E - x_F)-2}{2} - 1\\
    & = \alpha - 1  - \frac{\Gain'}{2} + (x_E - x_F)-2 \nonumber\\
    \label{less_than_5}
    & < \alpha - 1,
    % \text{   (By our assumption)}
\end{align}
where the inequality follows by the assumption $x_E - x_F < \Gain'/2 + 2$. Hence, since we also have $\alpha < \beta$ by Remark \ref{rm:alpha_beta}, all points in the special region of $X$ will have y-coordinate less than that of $Z$. Alternatively, denoting vertex $(n,\alpha-1)$ by $T$, we have that $R_X$ is contained in the rectangle $PQTZ$.

Let us suppose for contradiction that there exist two distinct points $A = (x_A, y_A)$ and $B = (x_B, y_B)$ which are not distinguished by $X$, $Y$, $Z$. We distinguish three cases based on the positions of $A$ and $B$:

\textbf{Case 1:} $A, B \in N_X$

    In this case all distances between $X,Y,Z$ and $A,B$ are the same in graph $G'$ as in graph $G$. It is easy to see that to be equidistant from $X$ and $Y$, vertices $A$ and $B$ must be symmetric to the horizontal line through $X$ and $Y$, in which case $Z$ can distinguish $A$ and $B$. 
    
\textbf{Case 2:} $A, B \in R_X$

    In this case, we show that $A$ and $B$ cannot be equidistant from both $Y$ and $Z$. Both $A$ and $B$ lie inside of $R_X$, and thus the region $PQTZ$. Now we show that $Y$ and $Z$ resolve $PQTZ$ in $G$, which implies that they resolve $PQYZ$ in $G'$ because they are normal vertices. Our argument will be similar to the standard argument that shows that two adjacent corners resolve the grid. To be equidistant from $Z$, both of them should lie on a diagonal line parallel to $PR$, or equivalently, 
    \begin{equation}
    \label{eq:diag1}
        x_A - y_A = x_B - y_B.
    \end{equation}
    To be equidistant from $Y$, they should lie on a diagonal line parallel to $QS$, or equivalently,
    \begin{equation}
    \label{eq:diag2}
        x_A + y_A = x_B + y_B.
    \end{equation}
    However, equations \eqref{eq:diag1} and \eqref{eq:diag2} cannot hold simultaneously for $A\ne B$. 

\textbf{Case 3:} One of $A$ and $B$ is in $R_X$, and the other is in $N_X$

Without loss of generality, we assume that $A \in R_X$ and $B \in N_X$. Since $R_X$ lies inside of $PQTZ$, we know that the y-coordinate of $A$ is less than that of $Z$ and $Y$, i.e., $y_A < \alpha - 1 < \beta - 1$.  Since we have shown in Case 2 that $Y$ and $Z$ resolve $PQTZ$ in $G'$, $B$ cannot lie in the region $PQTZ$. There are two other possibilities for where $B$ could lie:
 
    \begin{enumerate}
    \item Let us assume that $B$ lies in the region $ZTYX$. Since $Y$ does not distinguish $A$ and $B$, we have $AY = BY$, which implies that $x_A + y_A = x_B + y_B$ as in equation \eqref{eq:diag2}. Similarly, since $Z$ does not distinguish $A$ and $B$, we have $AZ = BZ$, which implies that $x_A + (\alpha - 1 - y_A) = x_B + y_B - (\alpha - 1)$. Subtracting the second equation from the first gives $y_A = \alpha - 1$ which contradicts equation \eqref{less_than_5}. 
    \item Let us assume that $B$ lies in the region $XYRS$,  
    % It can be verified that for $A$ and $B$ to be equidistant from both $Y$ and $Z$, $B$ must be below $X$
    or equivalently, $y_B \ge \beta - 1$. 
    % Hence $d_{G'}(A, Y) = n - x_A + \beta - y_A$ and $d_{G'}(A, Y) = x_A - 1 + \alpha - 1 - y_A$. Now there are three possibilities for $y_B$:
    Since we have assumed $A$ and $B$ to be equidistant from $X$ and $Z$, we have 
    % $d_{G'}(A, Y) = d_{G'}(B, Y)$, $d_{G'}(A, Z) = d_{G'}(B, Z)$ and $d_{G'}(A, X) = d_{G'}(B, X)$
    $AZ = BZ$ and $BX = d_{G'}(A, X) = AX -  \Gain(A, X)$. Writing these equations in terms of the variables $x_A$, $y_A$, $x_B$ and $y_B$ gives
%    \begin{equation}\label{first_5}
%       n - x_A + (\beta - 1 - y_A) = n - x_B + y_B - (\beta - 1)
%    \end{equation}
    \begin{equation}\label{second_5}
      x_A - 1 + (\alpha - 1 - y_A) = x_B - 1 + y_B - (\alpha - 1),
    \end{equation}
    and
    \begin{equation}\label{third_5}
        x_A - 1 + (\beta - 1 - y_A) - \Gain(A, X) = x_B - 1 + y_B - (\beta - 1).
    \end{equation}
    Subtracting equation \eqref{third_5} from equation \eqref{second_5} yields
    \begin{equation}
    \label{eq:GainAXlast}
        \Gain(A, X) = 2(\beta - \alpha) = 2(x_E - x_F).
    \end{equation}
    % We have $\mathrm{Gain_{\max}(X)} = ($
    Equations \eqref{eq:GainAXlast} and \eqref{lem5_gain_max} together contradict the fact that $\Gain(A, X) \leq \Gain_{\max}(X)$.
    \end{enumerate}
We considered all cases and the proof is concluded.
\end{proof}

\subsubsection{Precise conjecture}
Finally, we present our precise conjecture which completely characterizes metric dimension for any $2$-dimensional grid graph augmented with one edge. We believe this can be proved by rigorous case-wise analysis but it is out of the scope of this paper. We have verified this conjecture for square grids with sizes up to $15\times 15$ using simple C++ programs available at \cite{https://doi.org/10.5281/zenodo.3999323}.  Note that the conjecture is stated not only for square grids but also for $m \times n$ rectangular grids, but for these graphs we only verified the conjecture for a few parameter values due to the increased number of cases.
\begin{conjecture}
\label{conj:precise}
Let $G$ be a $2$-dimensional grid graph with $m$ rows and $n$ columns. Let $e$ be the edge between vertices $F = (x_F, y_F)$ and $E = (x_E, y_E)$ with $x_F, x_E \in \{1,..., n\}$, $y_F, y_E\in \{1,...,m\}$, with the assumption that $EF \geq 2$. Let $G'=(V, E_G \cup \{e\})$ be the grid augmented with one edge. Let  $\Gain = |y_E - y_F| + |x_F - x_E| - 1$ and $\Gain' = ||y_F - y_E| - |x_F - x_E|| - 1$.
\begin{itemize}
\item $\beta(G') = 4$ if all of the following conditions are satisfied:
\begin{itemize}
    \item None of the endpoints of $e$ is a corner of the grid. i.e.,
    $$(x_E, y_E), (x_F, y_F) \notin \{(1, 1), (n, 1), (1, m), (n, m) \}$$
    
    \item $\Gain'$ is positive and even. 
    
    \item $\mathrm{min}(|x_F - x_E|, |y_F - y_E|) \geq \frac{\Gain'}{2} + 2$
\end{itemize}
\item $\beta(G') = 2$  if any of the following conditions is satisfied:
\begin{itemize}
    \item $\Gain = 1$
    % \item $Gain' = -1$ and one of the endpoints is a corner of the grid
    \item $\Gain' \leq 1$, $\Gain$ is odd and one of the endpoints is a corner of the grid.
    \item $\Gain' \geq 3$, $\Gain$ is odd, $\Gain - \Gain' \leq 2$ and one of the endpoints is a corner of the grid.
    \item $\Gain$ is odd and both endpoints are corners of the grid. 
    % \item $Gain = Gain'$, $Gain$ is odd and one of the endpoints is a corner of the grid. 
\end{itemize}
\item $\beta(G') = 3$  for all other cases. 
\end{itemize}
\end{conjecture}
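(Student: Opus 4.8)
The plan is to prove matching upper and lower bounds on $\beta(G')$ in every region of the $(E,F)$ parameter space, using the parity of $\Gain$ as the top-level dichotomy: the $\beta(G')=2$ branch lives entirely inside the odd-$\Gain$ regime, while the $\beta(G')=4$ branch lives inside the even-$\Gain$ regime (where a positive $\Gain'$ has the same parity as $\Gain$), and $\beta(G')=3$ is the default in both. As in the proof of Theorem \ref{th:atmost_4} I would first reduce to the canonical orientation of Assumption \ref{assumption:one}, but I must now also carry the configurations that Assumption \ref{assumption:two} excluded ($x_F=x_E$, $\Gain'\le 1$, and endpoints on the boundary or at a corner), since these are precisely where the answer drops to $2$ or where the normal region changes shape.

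The generic interior is almost already done. Theorem \ref{th:atmost_4} gives $\beta(G')\le 4$ unconditionally and Lemma \ref{lem:ge3} gives $\beta(G')\ge 3$ under Assumptions \ref{assumption:one}--\ref{assumption:two}; Lemmas \ref{lem:gain_odd} and \ref{lem:gain_even} then pin $\beta(G')=3$ whenever $\Gain'$ is odd, or $\Gain'$ is even with $x_E-x_F<\Gain'/2+2$. The one interior subcase left is $\Gain'$ even and positive with $\min(|x_E-x_F|,|y_E-y_F|)\ge \Gain'/2+2$, and proving $\beta(G')\ge 4$ there is the crux of the whole conjecture.

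For that lower bound I would argue by contradiction from a hypothetical resolving set $\{X,Y,Z\}$. The even-$\Gain$ case enjoys a rigidity the odd case lacks: by Claim \ref{parity} every positive gain is even, so each vertex is either normal or has $\Gain_{\max}\ge 2$, and every non-normal vertex lies in exactly one of the disjoint regions $R_E,R_F$. If a vertex lies in $R_F$ (resp.\ $R_E$) then all four neighbors of $F$ (resp.\ $E$) are equidistant from it, so by Remark \ref{four_points} at least two of $X,Y,Z$ must be informative about the neighbors of $F$ and at least two about the neighbors of $E$; this forces at least one of the three to be normal and at most one to lie in each of $R_E$ and $R_F$. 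Combined with the boundary restriction of Claim \ref{boundary} and the explicit special-region shapes of Claim \ref{special_region}, this should confine $X,Y,Z$ to a short list of boundary placements. For each placement I would produce a pair of vertices symmetric across the $45^\circ$ strip or the horizontal mid-strip of the normal region that none of the three separates: the threshold $x_E-x_F\ge \Gain'/2+2$ is exactly the point at which the special region $R_X$ grows too tall to stay below the third landmark (the reversal of inequality \eqref{less_than_5} that made Lemma \ref{lem:gain_even} work), so the symmetric pair can no longer be broken. Carrying out this placement-by-placement bookkeeping, with the attendant explosion of boundary and corner subcases, is the tedious step the authors deferred, and it is unquestionably the main obstacle.

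The remaining branches are more routine. For $\beta(G')=2$ I would exhibit, in each of the four listed sub-cases ($\Gain=1$; $\Gain$ odd with a corner endpoint and either $\Gain'\le 1$ or $\Gain-\Gain'\le 2$; or $\Gain$ odd with two corner endpoints), a pair of adjacent corners of the plain grid that survives as a resolving set of $G'$ --- the common feature being that the extra edge either alters almost no distances or has its effect absorbed by a corner endpoint, so the classical two-corner resolving set still works. The matching lower bound $\beta(G')\ge 3$ outside these sub-cases, and the $\beta(G')=3$ upper bounds in the edge-cases (small $\Gain'$, boundary or corner endpoints), follow from re-running the arguments of Lemmas \ref{lem:ge3}, \ref{lem:gain_odd} and \ref{lem:gain_even} after recomputing the normal region; for $\Gain'=0$ the strip $N$ can reach the $PQ$ and $RS$ boundaries, which must be handled by hand, but the underlying two-corners-only obstruction is unchanged.
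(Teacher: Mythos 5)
The statement you set out to prove is, in the paper, explicitly left as a conjecture: the authors prove only the ingredients you cite (Theorem \ref{th:atmost_4}, Lemmas \ref{lem:ge3}, \ref{lem:gain_odd} and \ref{lem:gain_even}) and otherwise verify the claim by computer search on grids up to $15\times 15$. Your proposal reproduces exactly this state of affairs rather than closing it: every step beyond the already-proved lemmas is conditional (``should confine'', ``I would produce'', ``follow from re-running''), and the steps you defer are precisely the parts that are open. So this is a roadmap, not a proof, and it has concrete gaps.

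The most serious gap is the lower bound $\beta(G')\ge 4$. Your rigidity observation is sound: in the even-$\Gain'$ regime, Claim \ref{parity} forces every non-normal vertex to have $\Gain_{\max}\ge 2$, so a landmark in $R_F$ (resp.\ $R_E$) gives all four neighbours of $F$ (resp.\ $E$) the same distance, and Remark \ref{four_points} then caps the number of landmarks in $R_F$ and in $R_E$ at one each. But the next step --- that this ``confines $X,Y,Z$ to a short list of boundary placements'' --- does not follow from anything you cite. Claim \ref{boundary} forces only \emph{two} of the three landmarks onto opposite boundaries; the third may be an arbitrary interior vertex, and Claim \ref{special_region} describes special regions only for vertices on the boundaries $PS$ and $QR$. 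The case analysis you would actually face ranges over an order-$n^2$ family of placements of the third landmark, with no stated mechanism for handling interior ones; this is exactly the ``tedious lower-bound'' obstruction the authors name as the reason the statement remains a conjecture. Separately, the entire $\beta(G')=2$ branch is unproven in both directions: you exhibit no two-element resolving sets (and when an endpoint of $e$ is a corner, distances from that corner do change, so ``the classical two-corner resolving set still works'' is not automatic --- which pair of corners works depends on the configuration), and the matching bound $\beta(G')\ge 3$ in the complementary edge cases cannot be obtained by ``re-running'' Lemma \ref{lem:ge3}: its proof uses Assumption \ref{assumption:two} throughout, via Claim \ref{claim:normal_region_grid}'s description of the normal region, which fails when $\Gain'\le 1$ or when $E,F$ touch the boundary (the strip $N$ can then reach $PQ$ and $RS$, changing which boundary vertices are normal and hence the whole candidate list for $X$ and $Y$). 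Until those two blocks of case analysis are actually carried out, the statement remains exactly as open as the paper leaves it.
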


\section*{Acknowledgements}

The work presented in this paper was supported in part by the Swiss National Science Foundation under grant number 200021-182407.

%%%%%%%%%%%%%%%%%%%%%%%%%%%%%%%%%%%%%%%%%%%%%%%%%%%%%%
\bibliographystyle{plain}  
\bibliography{references}  %%% Remove comment to use the external .bib file (using bibtex).

\begin{thebibliography}{10}

\bibitem{abraham2017fully}
Ittai Abraham, Shiri Chechik, and Sebastian Krinninger.
\newblock Fully dynamic all-pairs shortest paths with worst-case update-time
  revisited.
\newblock In {\em Proceedings of the Twenty-Eighth Annual ACM-SIAM Symposium on
  Discrete Algorithms}, pages 440--452. SIAM, 2017.

\bibitem{alderweireld2020covid}
Christian~EA Alderweireld, Anton~GM Buiting, JAN Murk, Jaco~J Verweij,
  Marvin~AH Berrevoets, and Marjo~EE van Kasteren.
\newblock Covid-19: patient zero in the netherlands.
\newblock {\em Nederlands tijdschrift voor geneeskunde}, 164, 2020.

\bibitem{babai1980random}
L{\'a}szl{\'o} Babai, Paul Erd{\H o}s, and Stanley~M Selkow.
\newblock Random graph isomorphism.
\newblock {\em SIaM Journal on computing}, 9(3):628--635, 1980.

\bibitem{bailey2011base}
Robert~F Bailey and Peter~J Cameron.
\newblock Base size, metric dimension and other invariants of groups and
  graphs.
\newblock {\em Bulletin of the London Mathematical Society}, 43(2):209--242,
  2011.

\bibitem{beerliova2006network}
Zuzana Beerliova, Felix Eberhard, Thomas Erlebach, Alexander Hall, Michael
  Hoffmann, Mat Mihal'{\'a}k, and L~Shankar Ram.
\newblock Network discovery and verification.
\newblock {\em IEEE Journal on selected areas in communications},
  24(12):2168--2181, 2006.

\bibitem{bojja2017dandelion}
Shaileshh Bojja~Venkatakrishnan, Giulia Fanti, and Pramod Viswanath.
\newblock Dandelion: Redesigning the bitcoin network for anonymity.
\newblock {\em Proceedings of the ACM on Measurement and Analysis of Computing
  Systems}, 1(1):1--34, 2017.

\bibitem{bollobas2012metric}
B{\'e}la Bollob{\'a}s, Dieter Mitsche, and Pawe{\l} Pra{\l}at.
\newblock Metric dimension for random graphs.
\newblock {\em The Electronic Journal of Combinatorics}, 20(4):P1, 2013.

\bibitem{caceres2010determining}
Jos{\'e} C{\'a}ceres, Delia Garijo, Mar{\'\i}a~Luz Puertas, and Carlos Seara.
\newblock On the determining number and the metric dimension of graphs.
\newblock {\em the electronic journal of combinatorics}, pages R63--R63, 2010.

\bibitem{caceres2007metric}
Jos{\'e} C{\'a}ceres, Carmen Hernando, Merce Mora, Ignacio~M Pelayo, Maria~L
  Puertas, Carlos Seara, and David~R Wood.
\newblock On the metric dimension of cartesian products of graphs.
\newblock {\em SIAM journal on discrete mathematics}, 21(2):423--441, 2007.

\bibitem{carinci2020covid}
Fabrizio Carinci.
\newblock Covid-19: preparedness, decentralisation, and the hunt for patient
  zero, 2020.

\bibitem{chartrand2000resolvability}
Gary Chartrand, Linda Eroh, Mark~A Johnson, and Ortrud~R Oellermann.
\newblock Resolvability in graphs and the metric dimension of a graph.
\newblock {\em Discrete Applied Mathematics}, 105(1-3):99--113, 2000.

\bibitem{demetrescu2004new}
Camil Demetrescu and Giuseppe~F Italiano.
\newblock A new approach to dynamic all pairs shortest paths.
\newblock {\em Journal of the ACM (JACM)}, 51(6):968--992, 2004.

\bibitem{eames2015six}
Ken Eames, Shweta Bansal, Simon Frost, and Steven Riley.
\newblock Six challenges in measuring contact networks for use in modelling.
\newblock {\em Epidemics}, 10:72--77, 2015.

\bibitem{eroh2015effect}
Linda Eroh, Paul Feit, Cong~X Kang, and Eunjeong Yi.
\newblock The effect of vertex or edge deletion on the metric dimension of
  graphs.
\newblock {\em Journal of Combinatorics}, 6(4):433--444, 2015.

\bibitem{eroh2017comparison}
Linda Eroh, Cong~X Kang, and Eunjeong Yi.
\newblock A comparison between the metric dimension and zero forcing number of
  trees and unicyclic graphs.
\newblock {\em Acta Mathematica Sinica, English Series}, 33(6):731--747, 2017.

\bibitem{fanti2017hiding}
Giulia Fanti, Peter Kairouz, Sewoong Oh, Kannan Ramchandran, and Pramod
  Viswanath.
\newblock Hiding the rumor source.
\newblock {\em IEEE Transactions on Information Theory}, 63(10):6679--6713,
  2017.

\bibitem{fanti2015spy}
Giulia Fanti, Peter Kairouz, Sewoong Oh, and Pramod Viswanath.
\newblock Spy vs. spy: Rumor source obfuscation.
\newblock In {\em Proceedings of the 2015 ACM SIGMETRICS International
  Conference on Measurement and Modeling of Computer Systems}, pages 271--284,
  2015.

\bibitem{garijo2014difference}
Delia Garijo, Antonio Gonz{\'a}lez, and Alberto M{\'a}rquez.
\newblock The difference between the metric dimension and the determining
  number of a graph.
\newblock {\em Applied Mathematics and Computation}, 249:487--501, 2014.

\bibitem{geneson2020extremal}
Jesse Geneson, Suchir Kaustav, and Antoine Labelle.
\newblock Extremal results for graphs of bounded metric dimension.
\newblock {\em arXiv preprint arXiv:2008.13302}, 2020.

\bibitem{geneson2020broadcast}
Jesse Geneson and Eunjeong Yi.
\newblock Broadcast dimension of graphs.
\newblock {\em arXiv preprint arXiv:2005.07311}, 2020.

\bibitem{gomez2012inferring}
Manuel Gomez-Rodriguez, Jure Leskovec, and Andreas Krause.
\newblock Inferring networks of diffusion and influence.
\newblock {\em ACM Transactions on Knowledge Discovery from Data (TKDD)},
  5(4):1--37, 2012.

\bibitem{harary1976metric}
Frank Harary and Robert~A Melter.
\newblock On the metric dimension of a graph.
\newblock {\em Ars Combin}, 2(191-195):1, 1976.

\bibitem{hauptmann2012approximation}
Mathias Hauptmann, Richard Schmied, and Claus Viehmann.
\newblock Approximation complexity of metric dimension problem.
\newblock {\em Journal of Discrete Algorithms}, 14:214--222, 2012.

\bibitem{hernando2008fault}
Carmen Hernando, Merc{\'e} Mora, Peter~J Slater, and David~R Wood.
\newblock Fault-tolerant metric dimension of graphs.
\newblock {\em Convexity in discrete structures}, 5:81--85, 2008.

\bibitem{hu2006wormhole}
Yih-Chun Hu, Adrian Perrig, and David~B Johnson.
\newblock Wormhole attacks in wireless networks.
\newblock {\em IEEE journal on selected areas in communications},
  24(2):370--380, 2006.

\bibitem{hussain2018computing}
Zafar Hussain, Mobeen Munir, Maqbool Chaudhary, and Shin~Min Kang.
\newblock Computing metric dimension and metric basis of 2d lattice of
  alpha-boron nanotubes.
\newblock {\em Symmetry}, 10(8):300, 2018.

\bibitem{khuller1996landmarks}
Samir Khuller, Balaji Raghavachari, and Azriel Rosenfeld.
\newblock Landmarks in graphs.
\newblock {\em Discrete Applied Mathematics}, 70(3):217--229, 1996.

\bibitem{komjathy2020metric}
J{\'u}lia Komj{\'a}thy and Gergely {\'O}dor.
\newblock The metric dimension of critical galton-watson trees and linear
  preferential attachment trees.
\newblock {\em arXiv preprint arXiv:2002.08503}, 2020.

\bibitem{lichev2021localization}
Lyuben Lichev, Dieter Mitsche, and Pawel Pralat.
\newblock Localization game for random geometric graphs.
\newblock {\em arXiv preprint arXiv:2102.10352}, 2021.

\bibitem{manuel2006landmarks}
Paul Manuel, Bharati Rajan, Indra Rajasingh, and M~Chris Monica.
\newblock Landmarks in torus networks.
\newblock {\em Journal of Discrete Mathematical Sciences and Cryptography},
  9(2):263--271, 2006.

\bibitem{melter1984metric}
Robert~A Melter and Ioan Tomescu.
\newblock Metric bases in digital geometry.
\newblock {\em Computer Vision, Graphics, and Image Processing},
  25(1):113--121, 1984.

\bibitem{mitsche2015limiting}
Dieter Mitsche and Juanjo Ru{\'e}.
\newblock On the limiting distribution of the metric dimension for random
  forests.
\newblock {\em European Journal of Combinatorics}, 49:68--89, 2015.

\bibitem{mol2020threshold}
Lucas Mol, Matthew~JH Murphy, and Ortrud~R Oellermann.
\newblock The threshold dimension of a graph.
\newblock {\em arXiv preprint arXiv:2001.09168}, 2020.

\bibitem{OdoThi19}
Gergely Odor and Patrick Thiran.
\newblock Sequential metric dimension for random graphs.
\newblock arXiv:1910.10116 [math.CO], 2019.

\bibitem{pinto2012locating}
Pedro~C. Pinto, Patrick Thiran, and Martin Vetterli.
\newblock Locating the source of diffusion in large-scale networks.
\newblock {\em Phys. Rev. Lett.}, 109:068702, Aug 2012.

\bibitem{raj2017metric}
F.~S. {Raj} and A.~{George}.
\newblock On the metric dimension of {HDN} 3 and {PHDN} 3.
\newblock In {\em 2017 IEEE International Conference on Power, Control, Signals
  and Instrumentation Engineering (ICPCSI)}, pages 1333--1336, Sep. 2017.

\bibitem{https://doi.org/10.5281/zenodo.3999323}
{Satvik Mashkaria}.
\newblock Verification of a conjecture regarding metric dimension of a grid
  augmented with one edge, 2020.
\newblock https://zenodo.org/record/3999323.

\bibitem{sebHo2004metric}
Andr{\'a}s Seb{\H{o}} and Eric Tannier.
\newblock On metric generators of graphs.
\newblock {\em Mathematics of Operations Research}, 29(2):383--393, 2004.

\bibitem{shah2009rumors}
Devavrat Shah and Tauhid Zaman.
\newblock Rumors in a network: Who's the culprit?
\newblock {\em IEEE Transactions on Information Theory}, 57:5163--5181, 2009.

\bibitem{shao2019metric}
Zehui Shao, Pu~Wu, Enqiang Zhu, and Lanxiang Chen.
\newblock On metric dimension in some hex derived networks.
\newblock {\em Sensors}, 19(1):94, 2019.

\bibitem{slater1975leaves}
Peter~J Slater.
\newblock Leaves of trees.
\newblock {\em Congr. Numer}, 14(549-559):37, 1975.

\bibitem{spinelli2017effect}
Brunella Spinelli, L~Elisa Celis, and Patrick Thiran.
\newblock The effect of transmission variance on observer placement for
  source-localization.
\newblock {\em Applied network science}, 2(1):20, 2017.

\bibitem{zejnilovic2013network}
Sabina Zejnilovic, Joao Gomes, and Bruno Sinopoli.
\newblock Network observability and localization of the source of diffusion
  based on a subset of nodes.
\newblock In {\em Communication, Control, and Computing (Allerton), 2013 51st
  Annual Allerton Conference on}, pages 847--852. IEEE, 2013.

\bibitem{zejnilovic2016extending}
Sabina Zejnilovi{\'c}, Dieter Mitsche, Jo{\~a}o Gomes, and Bruno Sinopoli.
\newblock Extending the metric dimension to graphs with missing edges.
\newblock {\em Theoretical Computer Science}, 609:384--394, 2016.

\bibitem{zhang2020strategies}
Xu~Zhang, Xiaoyuan Chen, Zhipeng Zhang, Ayan Roy, and Yongyi Shen.
\newblock Strategies to trace back the origin of covid-19.
\newblock {\em Journal of Infection}, 80(6):e39--e40, 2020.

\end{thebibliography}
%%% and comment out the ``thebibliography'' section.

\end{document}